\tikzset{lab/.style={auto,font=\scriptsize}} % arrow labels
\definecolor{darkgreen}{rgb}{0,0.45,0} 
\newcommand{\D}{\ensuremath{\mathscr{D}}\xspace}
\newcommand{\cA}{\ensuremath{\mathcal{A}}\xspace}
\newcommand{\cB}{\ensuremath{\mathcal{B}}\xspace}
\newcommand{\cC}{\ensuremath{\mathcal{C}}\xspace}
\newcommand{\cD}{\ensuremath{\mathcal{D}}\xspace}
\newcommand{\bC}{\ensuremath{\mathcal{C}}\xspace}
\newcommand{\bM}{\ensuremath{\mathcal{M}}\xspace}
\newcommand{\bW}{\ensuremath{\mathcal{W}}\xspace}
\newcommand{\bbone}{\ensuremath{\mathbb{1}}\xspace}
\newcommand{\bbtwo}{\ensuremath{\mathbb{2}}\xspace}
\newcommand{\bbthree}{\ensuremath{\mathbb{3}}\xspace}
\newcommand{\op}{^{\mathrm{op}}}
\newcommand{\coop}{^{\mathrm{coop}}}
\def\ho{\mathscr{H}\!\mathit{o}\xspace}
\newcommand{\fib}{\mathsf{fib}}
\newcommand{\cof}{\mathsf{cof}}
\def\shift#1#2{{#1}^{#2}}
\newcommand{\ot}{\ensuremath{\leftarrow}}
\let\toto\rightrightarrows
\let\into\hookrightarrow
\let\xto\xrightarrow
\def\toiso{\xto{\smash{\raisebox{-.5mm}{$\scriptstyle\sim$}}}}
\def\cCat{\ensuremath{\mathcal{C}\mathit{at}}\xspace}
\def\cCAT{\ensuremath{\mathcal{C}\mathit{AT}}\xspace}
\def\DT#1#2#3#4#5#6#7{%
  \xymatrix{{#1} \ar[r]^-{#2} &
    {#3} \ar[r]^-{#4} &
    {#5} \ar[r]^-{#6} &
    {#7}
  }}
\def\DTl#1#2#3#4#5#6#7{%
  \xymatrix@C=3pc{{#1} \ar[r]^-{#2} &
    {#3} \ar[r]^-{#4} &
    {#5} \ar[r]^-{#6} &
    {#7}
  }}
\def\defthm#1#2{%
  %% All types of theorems are numbered inside sections
  \newtheorem{#1}{#2}[section]%
  %% Tell hyperref's \autoref what to call things
  \expandafter\def\csname #1autorefname\endcsname{#2}%
  %% Make all the theorem counters actually the same counter
  %% (Using \newtheorem{lem}[thm]{Lemma} directly breaks \autoref)
  \expandafter\let\csname c@#1\endcsname\c@thm}
\theoremstyle{plain}
\newtheorem{thm}{Theorem}[section]
\theoremstyle{definition}
\theoremstyle{remark}
\let\c@equation\c@thm
\numberwithin{equation}{section}
\title{Mayer-Vietoris sequences in stable derivators}
\author{Moritz Groth}
\email{M.Groth@math.ru.nl}
\address{Department of Mathematics,
  Radboud University,
  Heyendaalseweg 135, 
  6525 AJ Nijmegen,
  Netherlands}
\thanks{The first author was partially supported by the Deutsche Forschungsgemeinschaft within the graduate program `Homotopy and Cohomology' (GRK 1150) and by the Dutch Science Foundation (NWO).}
\author{Kate Ponto}
\email{kate.ponto@uky.edu}
\address{Department of Mathematics,
  University of Kentucky,
  719 Patterson Office Tower,
  Lexington, KY, 40506,
  USA}
\thanks{The second author was partially supported by NSF grant DMS-1207670.}
\author{Michael Shulman}
\email{shulman@sandiego.edu}
\address{Department of Mathematics and Computer Science,
  University of San Diego,
  5998 Alcal\'a Park,
  San Diego, CA, 92117,
  USA}
\thanks{The third author was partially supported by an NSF postdoctoral fellowship and NSF grant DMS-1128155, and appreciates the hospitality of the University of Kentucky.
Any opinions, findings, and conclusions or recommendations expressed in this material are those of the authors and do not necessarily reflect the views of the National Science Foundation.}
\date{\today}
\begin{document}

\begin{abstract}
  We show that stable derivators, like stable model categories, admit Mayer-Vietoris sequences arising from cocartesian squares.
  Along the way we characterize homotopy exact squares, and give a detection result for colimiting diagrams in derivators.
  As an application, we show that a derivator is stable if and only if its suspension functor is an equivalence.
\end{abstract}

\maketitle

\tableofcontents

\section{Introduction}

There are many axiomatizations of ``stable homotopy theory'', including stable model categories~\cite{hovey:modelcats}, 
stable $(\infty,1)$-categories~\cite{lurie:ha,lurie:higher-topoi,joyal}, and triangulated categories~\cite{neeman:triang, verdier:thesis}.
In this paper we study a less well-known member of this family, the \emph{stable derivators}.
This notion was defined (under different names) by Grothendieck~\cite{grothendieck:derivateurs}, Franke~\cite{franke:triang}, and 
Heller~\cite{heller:htpythys}, and has been studied more recently in \cite{m:introder, cisinski:idcm, groth:ptstab}.
It is closely related to the other notions mentioned above: stable model categories and stable $(\infty,1)$-categories both give rise to stable derivators, while each (strong) stable derivator has an underlying triangulated category.

Among these choices,  derivators are a convenient level of generality because they are better-behaved than triangulated categories, while requiring less technical machinery than model categories or $(\infty,1)$-categories.
To be precise, a stable derivator enhances a derived category or homotopy category (such as a triangulated category) by also including the homotopy categories of diagrams of various shapes.
This allows homotopy limits and colimits to be characterized by ordinary universal properties.

The main concrete result of this paper is that \emph{Mayer-Vietoris sequences} exist in any stable derivator.
Precisely, this means that for any cocartesian square
\begin{equation}
  \vcenter{\xymatrix{
      x\ar[r]^f\ar[d]_g & y\ar[d]^j\\
      z\ar[r]_k & w,
    }}
\end{equation}
there is
a distinguished triangle (i.e.\ a fiber and cofiber sequence)
\begin{equation}
  \DTl{x}{(f,-g)}{y \oplus z}{[j,k]}{w}{}{\Sigma x.}\label{eq:intro-mvseq}
\end{equation}
As in any triangulated category, this yields long exact sequences in homology and cohomology.
The corresponding result for stable model categories is~\cite[Lemma~5.7]{may:traces}, while for triangulated categories there is no notion of ``cocartesian square'' other than the existence of~\eqref{eq:intro-mvseq}.
It seems that such a result for stable $(\infty,1)$-categories is not yet explicitly in the literature; we can now conclude it from our theorem about stable derivators.

However, the point of our new proof is not so much that it applies to new examples, but that it advances the theory of stable derivators, which can be easier to work with for the above-mentioned reasons.
Indeed, along the way we improve some basic computational lemmas about derivators, such as a characterization of homotopy exact squares and a tool to identify colimiting subdiagrams.
We conclude by showing that a derivator is stable (i.e.\ cocartesian and cartesian squares agree) if and only if it is cofiber-stable (i.e.\ its cofiber and fiber functors are equivalences) if and only if it is $\Sigma$-stable (i.e.\ its suspension and loop space functors are equivalences).

We begin in \S\ref{sec:derivators} by establishing notation and recalling basic definitions and facts about derivators.
In \S\ref{sec:hoexactsq} and \S\ref{sec:kan-cart} we give the aforementioned characterizations and detection results.
Then in \S\ref{sec:pointedstable} we recall definitions and facts about pointed and stable derivators, and in \S\ref{sec:mv} we establish our main result on the existence of Mayer-Vietoris sequences.
Finally, in \S\ref{sec:characterization} we prove the equivalence of the three notions of stability.
In Appendix~\ref{sec:mates} we summarize the ``calculus of mates'' for natural transformations, which is used extensively in the theory of derivators.

This paper can be thought of as a sequel to~\cite{groth:ptstab} and a prequel to \cite{gps:additivity} and \cite{gs:enriched}.

\subsection*{Acknowledgments}

We would like to thank the referee for suggesting a more elegant proof of \autoref{prop:pushout-dt}.

\section{Review of derivators}
\label{sec:derivators}

In this section we recall the definition of a derivator and fix some notation and terminology.
Let \cCat and \cCAT denote the 2-categories of small and large categories, respectively. We write $\bbone$ for the terminal category.   
For an object $a\in A$ we also use $a$ to denote the functor $\bbone\to A$ whose value on the object of $\bbone$ is $a$.

\begin{defn}
  A \textbf{derivator} is a 2-functor $\D\colon\cCat\op\to\cCAT$ with the following properties.
  \begin{itemize}[leftmargin=4em]
  \item[(Der1)] $\D\colon \cCat\op\to\cCAT$ takes coproducts to products.  In particular, $\D(\emptyset)$ is the terminal category.
  \item[(Der2)] For any $A\in\cCat$, the family of functors $a^*\coloneqq \D(a)\colon \D(A) \to\D(\bbone)$, as $a$ ranges over the objects of $A$, is jointly conservative (isomorphism-reflecting).
  \item[(Der3)] Each functor $u^*\coloneqq \D(u)\colon \D(B) \to\D(A)$ has both a left adjoint $u_!$ and a right adjoint $u_*$.
  \item[(Der4)] For any functors $u\colon A\to C$ and $v\colon B\to C$ in \cCat, let $(u/v)$ denote their comma category, with projections $p\colon(u/v) \to A$ and $q\colon(u/v)\to B$.
    If $B=\bbone$ is the terminal category, then the canonical mate-transformation
    (see \autoref{sec:mates})
    \[ q_! p^* \to q_! p^* u^* u_! \to q_! q^* v^* u_! \to v^* u_! \]
    is an isomorphism.
    Similarly, if $A=\bbone$ is the terminal category, then the canonical mate-transformation
    \[ u^* v_* \to p_* p^* u^* v_* \to p_* q^* v^* v_* \to p_* q^* \]
    is an isomorphism.
  \end{itemize}
\end{defn}

There are also notions of morphisms of derivators and transformations between such morphisms giving rise to a well-behaved 2-category of derivators, but we will have no need of them in this paper.

\begin{warn}
  Ours is one of two possible conventions for the definition of a derivator;
  it is based on the idea that the basic example should consist of (covariant) \emph{diagrams} (see \autoref{eq:derivator}).
  The other convention, which defines a derivator to be a 2-functor $\cCat\coop \to \cCAT$ (where $\cCat\coop$ denotes reversal of both 1-cells and 2-cells), requires that the basic example consist of \emph{presheaves} (i.e.\ contravariant diagrams).
  Our convention is that of Heller~\cite{heller:htpythys} and Franke~\cite{franke:triang}; the other convention was used by Grothendieck~\cite{grothendieck:derivateurs} and Cisinski~\cite{cisinski:idcm}.
  The two definitions are equivalent, by composition with the isomorphism $(-)\op : \cCat\op \to \cCat\coop$, but the directions of various 2-cells in each convention are reversed with respect to the other.
\end{warn}

For a functor $u\colon A\to B$, we write $u^*\colon\D(B) \to \D(A)$ for its image under the 2-functor \D, and refer to 
it as \textbf{restriction} along $u$. The category $\D(\bbone)$ is the \textbf{underlying category} of \D. We call the 
objects of $\D(A)$ \textbf{(coherent, $A$-shaped) diagrams} in \D, motivated by the following examples.

\begin{eg}\label{eq:derivator}
  Any (possibly large) category \bC gives rise to a \textbf{represented} 2-functor~$y(\bC)$ defined by
  \[ y(\bC)(A) \coloneqq \bC^A. \]
  Its underlying category is \bC itself.  
  If \bC is both complete and cocomplete, then $y(\bC)$ is a derivator (and in fact the converse also holds).
  The functors $u_!$ and $u_*$ are left and right Kan extensions respectively, and when $B$ is the terminal category they compute colimits and limits.
  Axiom (Der4) expresses the fact that Kan extensions can be computed ``pointwise'' from limits and colimits, as in~\cite[X.3.1]{maclane}.
\end{eg}

Any coherent diagram $X\in\D(A)$ has an \textbf{underlying (incoherent) diagram}, which is an ordinary diagram in $\D(\bbone)$, i.e.\ an object of the functor category $\D(\bbone)^A$.
For each $a\in A$, the underlying diagram of $X$ sends $a$ to $a^*X$. We may also write $X_a$ for $a^*X$. More generally, any $X\in\D(B\times A)$ has an underlying ``partially coherent'' diagram which is an object of $\D(B)^A$, sending $a\in A$ to the diagram $X_a = a^*X = (1_B\times a)^*X$.

We will occasionally refer to a coherent diagram as having the \textbf{form} of, or \textbf{looking like}, its underlying diagram, and proceed to draw that underlying diagram using objects and arrows in the usual way.
It is very important to note, though, that in a general derivator, a coherent diagram is not determined by its underlying diagram, not even up to isomorphism.
This is the case in the following two examples, which are the ones of primary interest.

\begin{eg}\label{eg:model}
  Suppose \bC is a \emph{Quillen model category} (see e.g.~\cite{hovey:modelcats}), with class \bW of \emph{weak equivalences}.
  Its \emph{derived} or \textbf{homotopy derivator} $\ho(\bC)$ is defined from $y(\bC)$ by formally inverting the pointwise weak equivalences:
  \[ \ho(\bC)(A) \coloneqq (\bC^A)[(\bW^A)^{-1}].\]
  See~\cite{cisinski:idcm,groth:ptstab} for proofs that this defines a derivator.
  Its underlying category is the usual homotopy category $\bC[\bW^{-1}]$ of \bC, while its functors $u_!$ and $u_*$ are the left and right derived functors, respectively, of those for $y(\bC)$.
\end{eg}

\begin{eg}
  If $\cC$ is a \emph{complete and cocomplete $(\infty,1)$-category} as in~\cite{lurie:higher-topoi,joyal}, then it has a \textbf{homotopy derivator} defined by
  \[ \ho(\cC)(A) \coloneqq \mathrm{Ho}(\cC^A) \]
  where $\mathrm{Ho}$ denotes the usual homotopy category of an $(\infty,1)$-category, obtained by identifying equivalent morphisms.
  Since this fact does not seem to appear in the literature, we briefly sketch a proof.

  Axiom (Der1) is easy, while (Der2) follows from~\cite[Theorem 5.C]{joyal}.
  For (Der3) and (Der4), let $\bC$ be a simplicial category which presents $\cC$, and
  let $\mathrm{SSET}$ denote the category of simplicial sets in a higher universe, so that there is a simplicial Yoneda embedding $\bC \to \mathrm{SSET}^{\bC\op}$.
  Now the projective model structure on $\mathrm{SSET}^{\bC\op}$ gives rise to a derivator $\ho(\mathrm{SSET}^{\bC\op})$ (in the higher universe).
  As with any model category, its right Kan extension functors are computed in terms of model-categorical homotopy limits.
  Moreover, by~\cite[5.1.1.1]{lurie:higher-topoi}, $\mathrm{SSET}^{\bC\op}$ presents the $(\infty,1)$-category $\infty\mathit{GPD}^{\cC\op}$, and these homotopy limits present $(\infty,1)$-categorical limits therein.

  Finally, the simplicial Yoneda embedding $\bC \to \mathrm{SSET}^{\bC\op}$ presents the $(\infty,1)$-cate\-gorical Yoneda embedding $\cC \to \infty\mathit{GPD}^{\cC\op}$, which by~\cite[5.1.3.2]{lurie:higher-topoi} is fully faithful and closed under small limits.
  Thus, each $\ho(\cC)(A)$ can be identified with a full subcategory of $\ho(\mathrm{SSET}^{\bC\op})(A) = \ho(\infty\mathit{GPD}^{\cC\op})(A)$.
  Since the right Kan extension functors of the latter are computed by small $(\infty,1)$-categorical limits, they preserve the image of $\ho(\cC)(A)$.
  Thus, $\ho(\cC)$ inherits the ``right half'' of (Der3) and (Der4) from $\ho(\mathrm{SSET}^{\bC\op})$.
  Applying the same argument to $\cC\op$ yields the other half of these axioms.

  Finally, we note that if \cC is presented by a combinatorial model category \bM, then the derivator $\ho(\cC)$ constructed above agrees with the derivator $\ho(\bM)$ constructed in \autoref{eg:model}.
  If $\bM = \mathbf{sSet}^B$, then this follows essentially from~\cite[5.1.1.1]{lurie:higher-topoi}.
  It is straightforward to verify that left Bousfield localization of a model category of simplicial presheaves corresponds to accessible localization of the $(\infty,1)$-category it presents, and both act pointwise on functor categories; thus we can extend the claim from simplicial presheaves to any Bousfield localization thereof.
  Finally, by~\cite{dug:pres}, any combinatorial model category is equivalent to a localization of a simplicial presheaf category.
\end{eg}

Following established terminology for~$(\infty,1)$-categories, in a general derivator we refer to the functors $u_!$ and $u_*$ in (Der3) as \textbf{left and right Kan extensions}, respectively, rather than \emph{homotopy} Kan extensions.
This is unambiguous since actual `categorical' Kan extensions are meaningless for an abstract derivator.
Similarly, when the target category $B$ is $\bbone$, we call them \textbf{colimits and limits}. In this language, (Der4) says that right and left Kan extensions are ``computed pointwise'' in terms of limits and colimits.

Note that (Der1) and (Der3) together imply that each category $\D(A)$ has (actual) small coproducts and products.

We say that a derivator is \textbf{strong} if it satisfies:
  \begin{itemize}[leftmargin=4em]
  \item[(Der5)] For any $A$, the induced functor $\D(A\times \bbtwo) \to \D(A)^\bbtwo$ is full and essentially surjective, where $\bbtwo=(0\to 1)$ is the category with two objects and one nonidentity arrow between them.
  \end{itemize}
Represented derivators and homotopy derivators associated to model categories or $\infty$-categories are strong.

\begin{rmk}\label{rmk:der5}
  Axiom (Der5) is necessary whenever we want to perform limit constructions starting with morphisms in the underlying category $\D(\bbone)$, since it enables us to ``lift'' such morphisms to objects of $\D(\bbtwo)$.
  Combined with (Der2), it implies that if two objects of $\D(A\times \bbtwo)$ become isomorphic in $\D(A)^\bbtwo$, then they were already isomorphic in $\D(A\times \bbtwo)$ --- although such an isomorphism is not in general uniquely determined by its image in $\D(A)^\bbtwo$.

  The exact form of axiom (Der5) is also negotiable; for instance, Heller~\cite{heller:htpythys} assumed a stronger version in which \bbtwo is replaced by any finite free category.
\end{rmk}

The following examples are also often useful.

\begin{eg}\label{eg:shifted}
  For any derivator \D and category $B\in\cCat$, we have a \textbf{shifted} derivator $\shift\D B$ defined by $\shift\D B(A) \coloneqq \D(B\times A)$.
  This is technically very convenient: it enables us to ignore extra ``parameter'' categories $B$ by shifting them into the (universally quantified) derivator under consideration.
  Note that $\shift{y(\bC)}{B} \cong y(\bC^B)$ and $\shift{\ho(\bC)}{B} \cong \ho(\bC^B)$.

  \label{eg:opposite}
  Similarly, the \textbf{opposite} derivator of \D is defined by $\D\op(A) \coloneqq \D(A\op)\op$.
  Note that $y(\bC)\op = y(\bC\op)$ and $\ho(\bC\op) = \ho(\bC)\op$ and $(\shift\D B)\op = \shift{(\D\op)}{B\op}$.

  If \D is strong, so are $\shift{\D}{B}$ and $\D\op$;
  see~\cite[Theorem~1.25]{groth:ptstab}.
\end{eg}

\section{Homotopy exact squares}
\label{sec:hoexactsq}

The primary tool for calculating with Kan extensions in derivators is the notion of a \emph{homotopy exact square} (see also \cite{maltsiniotis:exact}), which is defined as follows.
Suppose given any natural transformation in \cCat which lives in a square
\begin{equation}
  \vcenter{\xymatrix{
      D\ar[r]^p\ar[d]_q \drtwocell\omit{\alpha} &
      A\ar[d]^u\\
      B\ar[r]_v &
      C.
    }}\label{eq:hoexactsq'}
\end{equation}
Then by 2-functoriality of \D, we have an induced transformation
\[\vcenter{\xymatrix{
    \D(C) \ar[r]^{u^*}\ar[d]_{v^*}\drtwocell\omit{\alpha^*} &
    \D(A) \ar[d]^{p^*}\\
    \D(B) \ar[r]_{q^*} &
    \D(D).
  }}\]
As summarized in \autoref{sec:mates}, this transformation has mates
\begin{gather}
  q_! p^* \to q_! p^* u^* u_! \xto{\alpha^*} q_! q^* v^* v_! \to v^* u_!  \mathrlap{\qquad\text{and}}\label{eq:hoexmate1'}\\
  u^* v_* \to p_* p^* u^* v_* \xto{\alpha^*} p_* q^* v^* v_* \to p_* q^*,\label{eq:hoexmate2'}
\end{gather}
of which one is an isomorphism if and only if the other is.

\begin{defn}\label{defn:hoexact}
  A square~\eqref{eq:hoexactsq'} is \textbf{homotopy exact} if the two mate-transforma\-tions~\eqref{eq:hoexmate1'} and~\eqref{eq:hoexmate2'} are isomorphisms in any derivator \D.
\end{defn}

For instance, Axiom (Der4) asserts that the following canonical squares are homotopy exact:
\begin{equation}
  \vcenter{\xymatrix{
      (u/c)\ar[r]^-p\ar[d]_q \drtwocell\omit{\alpha} &
      A\ar[d]^u\\
      \bbone\ar[r]_c &
      C
    }}
  \qquad\text{and}\qquad
  \vcenter{\xymatrix{
      (c/v)\ar[r]^-p\ar[d]_q \drtwocell\omit{\alpha} &
      \bbone\ar[d]^c\\
      B \ar[r]_v &
      C
    }}
\end{equation}
Some other examples are:
\begin{itemize}
\item If~\eqref{eq:hoexactsq'} is a pullback square, then it is homotopy exact if $u$ is an opfibration or $v$ is a fibration; see~\cite[Prop.~1.24]{groth:ptstab}.
\item By the functoriality of mates, the horizontal or vertical pasting of homotopy exact squares is homotopy exact.
\item If $u\colon A\to C$ is fully faithful, then the identity $u\circ 1_A = u\circ 1_A$ is homotopy exact; see~\cite[Prop.~1.20]{groth:ptstab}.
  Thus, in this case $u_!$ and $u_*$ are fully faithful.
\item For any $u\colon A\to C$ and $v\colon B\to C$, the comma square
  \begin{equation}
  \vcenter{\xymatrix@-.5pc{
      (u/v)\ar[r]^-p\ar[d]_q \drtwocell\omit &
      A\ar[d]^u\\
      B\ar[r]_v &
      C
      }}
  \end{equation}
  is homotopy exact; see~\cite[Proposition~1.26]{groth:ptstab}.
  In other words, the conditions ``$B=\bbone$'' or ``$A=\bbone$'' in (Der4) can be removed.
%\item The disjoint union of homotopy exact squares is again homotopy exact (this is immediate from (Der1)).
\end{itemize}
The main result of this section is a characterization result for all homotopy exact squares, which reduces them to the following simpler notion.

\begin{defn}\label{def:hocontr}
  A small category $A$ is \textbf{homotopy contractible} if the counit
  \begin{equation}
    (\pi_A)_!(\pi_A)^* \to 1_{\D(\bbone)}\label{eq:hocontr'}
  \end{equation}
  is an isomorphism in any derivator.
\end{defn}
We use $\pi_A$ to denote the functor $A\xto{} \bbone$.  More generally, we will use $\pi$ to denote projections.

Thus, $A$ is homotopy contractible if the $A$-shaped diagram which is constant at an object $x\in\D(\bbone)$ has colimit $x$.
Note that in a represented derivator~$y(\bC)$,~\eqref{eq:hocontr'} is an isomorphism whenever $A$ is connected.
However, 
being homotopy contractible is a much stronger condition than being connected.

\begin{defn}
  For a square as in~\eqref{eq:hoexactsq'}, and $a\in A$, $b\in B$, and $\gamma\colon u(a)\to v(b)$, let
  \((a/D/b)_\gamma\)
  be the category of triples $(d\in D, a\xto{\phi} p(d), q(d) \xto{\psi}b)$ such that $v\psi \circ \alpha_d \circ u\phi = \gamma$.
\end{defn}

\begin{thm}\label{thm:hoexchar}
  If $(a/D/b)_\gamma$ is homotopy contractible for all $a$, $b$, and $\gamma$, then~\eqref{eq:hoexactsq'} is homotopy exact.
\end{thm}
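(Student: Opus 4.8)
The plan is to show that the first mate \eqref{eq:hoexmate1'}, i.e.\ the canonical map $q_! p^* \to v^* u_!$, is an isomorphism; since one of \eqref{eq:hoexmate1'} and \eqref{eq:hoexmate2'} is an isomorphism if and only if the other is, this suffices. By (Der2) it is enough to prove that $b^*(q_! p^* \to v^* u_!)$ is an isomorphism for each object $b\in B$. The whole argument is thus pointwise in $b$.

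Next I would rewrite both sides using (Der4). Applying (Der4) to $q\colon D\to B$ and $b\colon\bbone\to B$ identifies $b^* q_!$ with $(\pi_{(q/b)})_!\, r^*$, where $r\colon (q/b)\to D$ is the projection, so that $b^* q_! p^* \cong (\pi_{(q/b)})_!\,(pr)^*$; applying (Der4) to $u\colon A\to C$ and $vb\colon\bbone\to C$ identifies $b^* v^* u_! = (vb)^* u_!$ with $(\pi_{(u/vb)})_!\, s^*$, where $s\colon (u/vb)\to A$ is the projection. The key observation is that there is a functor $F\colon (q/b)\to (u/vb)$ sending $(d,\psi\colon q(d)\to b)$ to $(p(d),\, v\psi\circ\alpha_d\colon u(p(d))\to vb)$, and that it is compatible with the projections to $A$, namely $s\circ F = p\circ r$, so that $(pr)^* = F^* s^*$ and $\pi_{(q/b)} = \pi_{(u/vb)}\circ F$. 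Tracking \eqref{eq:hoexmate1'} through the mate calculus of \autoref{sec:mates}, one checks that $b^*(q_!p^*\to v^*u_!)$ corresponds precisely to the comparison $(\pi_{(q/b)})_!\,F^* s^* \to (\pi_{(u/vb)})_!\, s^*$ induced by $F$ and the counit of $(F_!,F^*)$; this identification is the bookkeeping-heavy but essentially routine step. Crucially, the coslice comma categories of $F$ are exactly the hypothesis categories: for $e=(a,\gamma)\in (u/vb)$, an object of $(e/F)$ is a triple $(d,\,\phi\colon a\to p(d),\,\psi\colon q(d)\to b)$ with $v\psi\circ\alpha_d\circ u\phi = \gamma$, so $(e/F) = (a/D/b)_\gamma$, which is homotopy contractible by hypothesis.

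The remaining step, and the main obstacle, is the homotopy-cofinality criterion: a functor all of whose coslices $(e/F)$ are homotopy contractible induces an isomorphism $(\pi_{(q/b)})_!\,F^* \to (\pi_{(u/vb)})_!$, and hence in particular after precomposing with $s^*$. I expect this to be the heart of the matter, for two reasons. First, it is literally the special case $B=C=\bbone$ of the theorem being proved, so it must be established as a self-contained lemma rather than invoked, on pain of circularity. Second, it cannot be reduced to a naive objectwise check: the comparison is the map $(\pi_{(u/vb)})_!$ applied to the counit $F_! F^* \to \mathrm{id}$, and since $(\pi_{(u/vb)})_!$ lands in $\D(\bbone)$ one cannot simply test it objectwise over $(u/vb)$ (the counit itself is not pointwise invertible, only ``invertible on colimits''). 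A correct proof instead feeds the contractibility of the $(e/F)$ into a further comma-category computation comparing the two colimits fiber-by-fiber over $(u/vb)$, where one must navigate the variance carefully — the pointwise computation of $F_!$ via (Der4) naturally produces the slices $(F/e)$, whereas finality is governed by the coslices $(e/F)=(a/D/b)_\gamma$ — before \autoref{def:hocontr} can be applied to collapse each fiber and conclude.
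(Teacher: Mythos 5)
Your reduction to the functor $F\colon (q/b)\to (u/vb)$ is sound, and it matches the first step of the paper's own proof: by (Der2) and (Der4) it suffices to work over each $b\in B$, the comparison map is the one induced by $F$, and the coslices $(e/F)$ for $e=(a,\gamma)$ are indeed the categories $(a/D/b)_\gamma$. The genuine gap is that the proposal stops exactly where the content of the theorem lies. The statement you still need --- that a functor with homotopy contractible coslices induces an isomorphism on colimits --- is \autoref{thm:hofinal}, which the paper deduces \emph{from} \autoref{thm:hoexchar}; you rightly note that invoking it would be circular, and that it cannot be verified objectwise since the counit $F_!F^*\to\mathrm{id}$ is not invertible, only its image under $(\pi_{(u/vb)})_!$ is. But what you offer in its place is only a promise of ``a further comma-category computation'', and the direction you indicate is the wrong one: the pointwise formula of (Der4) for the \emph{left} Kan extension $F_!$ produces the slices $(F/e)$, which your hypothesis does not control, and no amount of bookkeeping converts them into the coslices $(e/F)$.

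The missing idea is to abandon the colimit-type mate and instead test the \emph{other} mate \eqref{eq:hoexmate2'} of the finality square for $F$, namely $(\pi_{(u/vb)})^*\to F_*(\pi_{(q/b)})^*$; recall that a square is homotopy exact if and only if either mate is invertible. This map lands in $\D((u/vb))$, so (Der2) applies: it suffices to restrict it to each $e\in (u/vb)$. The dual half of (Der4) computes $e^*F_*$ as a limit over precisely the coslice $(e/F)\cong (a/D/b)_\gamma$, and functoriality of mates identifies $e^*$ of the mate with the unit $\mathrm{id}\to(\pi_{(e/F)})_*(\pi_{(e/F)})^*$, which is invertible because $(e/F)$ is homotopy contractible (\autoref{def:hocontr} makes the counit invertible, hence also the unit, these being the two mates of a single square). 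That supplies the lemma and completes your outline. The paper organizes the same content differently: instead of isolating a finality lemma, it performs a \emph{second} pointwise reduction --- pasting comma squares over each $a\in A$ on top, again by (Der2) and (Der4) --- to arrive at squares whose off-diagonal corners are both $\bbone$, and then factors such a square through the discrete comma category $C(ua,vb)=(ua/vb)$, using (Der1) to split the counit $(k_{a,b})_!(k_{a,b})^*\to\mathrm{id}$ into a product over $\gamma$ of the counits $(\pi_{(a/D/b)_\gamma})_!(\pi_{(a/D/b)_\gamma})^*\to\mathrm{id}$, each invertible by hypothesis. Either route works; without one of them, your argument establishes nothing beyond what (Der4) already gives.
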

\begin{proof}
By (Der2) and (Der4), homotopy exactness of~\eqref{eq:hoexactsq'} is equivalent to homotopy exactness of the pasted squares 
\begin{equation}
  \vcenter{\xymatrix@-.5pc{
      (q/b)\ar[r]\ar[d] \drtwocell\omit &
      D\ar[r]^p\ar[d]_q \drtwocell\omit{\alpha} &
      A\ar[d]^u\\
      \bbone\ar[r]_b &
      B\ar[r]_v &
      C
    }}
\end{equation}
for all $b\in B$.  Extending the diagram and applying the same argument, 
homotopy exactness of~\eqref{eq:hoexactsq'} is equivalent to all of the following pastings being homotopy exact, where the objects of $(a/D/b)$ are triples $(d\in D, a\xto{\phi} p(d), q(d) \xto{\psi}b)$.
\begin{equation}
  \vcenter{\xymatrix@-.5pc{
      (a/D/b) \ar[rr]\ar[d] \drrtwocell\omit &&
      \bbone\ar[d]^a\\
      (q/b)\ar[r]\ar[d] \drtwocell\omit &
      D\ar[r]^p\ar[d]_q \drtwocell\omit{\alpha} &
      A\ar[d]^u\\
      \bbone\ar[r]_b &
      B\ar[r]_v &
      C.
    }}\label{eq:hoexreduction}
\end{equation}
The top-right and left-bottom composites around the boundary of~\eqref{eq:hoexreduction} applied to $(d,\phi,\psi)\in (a/D/b)$ yield $ua$ and $vb$, respectively, and the corresponding component of the pasted natural transformation~\eqref{eq:hoexreduction} is the composite
\begin{equation}
  ua \xto{u\phi} upd \xto{\alpha_d} vqd \xto{v\psi} vb.
\end{equation}
However, the square
\begin{equation}
  \vcenter{\xymatrix@-.5pc{
      C(ua,vb)\ar[r]\ar[d] \drtwocell\omit &
      \bbone\ar[d]^{u a}\\
      \bbone\ar[r]_{v b} &
      C
    }}\label{eq:hoexcomma}
\end{equation}
is also homotopy exact, since the discrete category $C(ua,vb)$ is also the comma category $(ua/vb)$.
Now~\eqref{eq:hoexreduction} factors through~\eqref{eq:hoexcomma} by a functor $k_{a,b}\colon (a/D/b) \to C(ua,vb)$, whose fiber over $\gamma$ is $(a/D/b)_\gamma$.
Thus, by the functoriality of mates,~\eqref{eq:hoexreduction} is homotopy exact if and only if the induced transformation
\begin{equation}
  \pi_! (k_{a,b})_! (k_{a,b})^* \pi^* \to \pi_! \pi^*\label{eq:hoextrans}
\end{equation}
is an isomorphism, where $\pi\colon C(ua,vb)\to\bbone$.
Finally, by (Der1), $(k_{a,b})^*$ is the product of all the induced functors $\D(\bbone) \to \D((a/D/b)_\gamma)$, and likewise for $(k_{a,b})_!$.
Thus, if each $(a/D/b)_\gamma$ is homotopy contractible,~\eqref{eq:hoextrans} is an isomorphism.
\end{proof}

As a particular case of \autoref{thm:hoexchar}, we obtain a sufficient condition for a functor to be \emph{homotopy final}.  

\begin{defn}\label{defn:hofinal}
  A functor $f\colon A\to B$ is called \textbf{homotopy final} if the square
  \[\vcenter{\xymatrix@-.5pc{
      A\ar[r]^f\ar[d] &
      B\ar[d]\\
      \bbone\ar[r] &
      \bbone
    }}
  \]
  is homotopy exact, i.e.\ for any $X\in\D(B)$, the colimits of~$X$ and of $f^* X$ agree.
\end{defn}

In particular, if $f\colon A\to B$ is homotopy final, then $A$ is homotopy contractible if and only if $B$ is.

\begin{cor}\label{thm:hofinal}
  A functor $f\colon A\to B$ is homotopy final if for each $b\in B$, the comma category $(b/f)$ is homotopy contractible.
\end{cor}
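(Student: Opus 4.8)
The plan is to obtain this as a direct application of \autoref{thm:hoexchar} to the square defining homotopy finality. By \autoref{defn:hofinal}, I must show that the square displayed in \autoref{defn:hofinal} is homotopy exact. This square matches the pattern of~\eqref{eq:hoexactsq'} upon taking the upper-left corner to be $A$, the upper-right corner to be $B$, and both lower corners to be $\bbone$, with top map $f$, left and right legs the projections $\pi_A$ and $\pi_B$, and bottom map $1_{\bbone}$; the mediating 2-cell $\alpha$ is forced to be the identity, since the two boundary composites $A\to\bbone$ agree.

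With these identifications I would next compute the indexing categories $(a/D/b)_\gamma$ appearing in \autoref{thm:hoexchar}. Because both lower corners are $\bbone$, the index lying in the lower-left corner and the comparison map $\gamma\colon u(a)\to v(b)$ are each uniquely determined, and the defining condition $v\psi\circ\alpha_d\circ u\phi=\gamma$ becomes vacuous, being an equation in $\bbone$. The only surviving index ranges over the upper-right corner $B$, and is exactly the object called $b$ in the statement. Unwinding the definition of $(a/D/b)_\gamma$ for this square (where now $D=A$), an object is an object $a\in A$ together with a morphism $\phi\colon b\to f(a)$ in $B$, the third datum $\psi$ taking values in $\bbone$ and hence being trivial; a morphism is a map in $A$ compatible with the chosen $\phi$'s. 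This is precisely the comma category $(b/f)$.

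It then only remains to feed in the hypothesis: each $(b/f)$ is homotopy contractible by assumption, so every indexing category $(a/D/b)_\gamma$ attached to our square is homotopy contractible, and \autoref{thm:hoexchar} yields that the square is homotopy exact, i.e.\ that $f$ is homotopy final. I expect no genuine obstacle here, since the argument is merely a specialization of the theorem already in hand; the only point requiring care is the bookkeeping above --- confirming that collapsing the lower row to $\bbone$ trivializes both $\gamma$ and the cocycle condition, so that $(a/D/b)_\gamma$ reduces cleanly to $(b/f)$.
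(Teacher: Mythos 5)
Your proof is correct and takes essentially the same approach as the paper: the corollary is stated there as a direct specialization of \autoref{thm:hoexchar} to the finality square of \autoref{defn:hofinal}, with no further details given. Your bookkeeping --- checking that collapsing the bottom row to $\bbone$ trivializes $\gamma$ and the compatibility condition, so that $(a/D/b)_\gamma$ reduces to the comma category $(b/f)$ --- supplies exactly the unwinding the paper leaves implicit.
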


Any right adjoint is homotopy final by~\cite[Prop.~1.18]{groth:ptstab}.
Thus, if two categories are connected by an adjunction, each is homotopy contractible if and only if the other is.
Often the easiest way to verify homotopy contractibility of a small category is to connect it to \bbone with a zigzag of adjunctions.
In particular, any category with an initial or terminal object is homotopy contractible.

\begin{rmk}
  Although we will not need it in this paper, the converse of \autoref{thm:hoexchar} also holds by the following argument.
  A functor $f\colon A\to B$ is a \textbf{homotopy equivalence} if the map
  \begin{equation}\label{eq:hoeqv}
    (\pi_A)_! (\pi_A)^* \cong (\pi_B)_! f_! f^* (\pi_B)^* \to (\pi_B)_! (\pi_B)^*
  \end{equation}
  is an isomorphism in any derivator.
  Heller~\cite{heller:htpythys} and Cisinski~\cite{cisinski:presheaves} showed that a functor $f$ is a homotopy equivalence if and only if  the nerve of $f$ is a weak homotopy equivalence.

  Now the proof of \autoref{thm:hoexchar} shows that~\eqref{eq:hoexactsq'} is homotopy exact if and only if each $k_{a,b}$ is a homotopy equivalence.
  Since $k_{a,b}$ is the disjoint union of the functors $k_{a,b,\gamma}\colon (a/D/b)_\gamma \to\bbone$, by Heller and Cisinski's characterization $k_{a,b}$ 
  is a homotopy equivalence if and only if each of $k_{a,b,\gamma}$ is.
  This implies that also the converse of \autoref{thm:hofinal} is true, i.e., a functor $f\colon A\to B$ is homotopy final if and only if the comma categories $(b/f)$ have weakly contractible nerves.

  We do not know how to show the converse of \autoref{thm:hoexchar} without using a characterization such as Heller and Cisinski's.
  It is true that if $f = \bigsqcup f_i : \bigsqcup A_i \to \bigsqcup B_i$ is a coproduct of functors, then the map~\eqref{eq:hoeqv} in any derivator splits up as a coproduct of the corresponding maps for the $f_i$'s.
  However, in a general derivator a coproduct of maps can be an isomorphism even if not all the summands are.
  For instance, in $\mathbf{Set}\op$, and in the category of commutative rings, the terminal object is annihilating for coproducts, so that the coproduct of any map with the identity of the terminal object is again the identity of the terminal object.

  It is true in a \emph{pointed} derivator (see \S\ref{sec:pointedstable}) that if a coproduct of maps is an isomorphism then each summand must be an isomorphism, since in the pointed case, every summand of a coproduct is naturally a retract of it.
  Moreover, as soon as~\eqref{eq:hoeqv} is an isomorphism in every pointed derivator, the functor $f$ must be a homotopy equivalence --- but the only proof we know of this latter fact uses Heller and Cisinski's characterization, noting that the morphism $\ho(\mathbf{sSet}) \to \ho(\mathbf{sSet_*})$ which adjoins a disjoint basepoint is cocontinuous and conservative.
  The analogous argument fails for a general derivator $\D$, since while it does has a ``pointed variant'' $\D_*$, the map $\D\to\D_*$ may not be conservative.
  For instance, if $\D$ is represented by $\mathbf{Set}\op$ or commutative rings, then $\D_*$ is trivial.
  (By contrast, asking that~\eqref{eq:hoeqv} is an isomorphism in every \emph{stable} derivator is a genuinely weaker statement, corresponding to the nerve of $f$ being a \emph{stable} homotopy equivalence.)
\end{rmk}

Heller and Cisinski's characterization also implies that the notions of homotopy exact square, homotopy equivalence functor, and homotopy contractible category are not actually dependent on the definition of a derivator.

\begin{thm}\label{thm:hoex}
  For a square~\eqref{eq:hoexactsq'} in \cCat, the following are equivalent.
  \begin{enumerate}
  \item The square is homotopy exact, i.e.\ the mate-transformation $q_! p^* \to v^* u_!$ is an isomorphism in any derivator \D.\label{item:he1}
  \item As in~\ref{item:he1}, but only for derivators of the form $\ho(\bC)$ for \bC a model category.\label{item:he2}
  \item As in~\ref{item:he1}, but only for derivators of the form $\ho(\cC)$ for \cC a complete and cocomplete $(\infty,1)$-category.\label{item:he4}
  \item As in~\ref{item:he1}, but only for the particular derivator $\ho(\mathbf{sSet}) = \ho(\infty\mathit{Gpd})$.\label{item:he3}
  \item Each functor $k_{a,b}$ is a homotopy equivalence.\label{item:he5}
  \item Each nerve $N k_{a,b}$ is a weak homotopy equivalence of simplicial sets.\label{item:he6}
  \item Each category $(a/D/b)_\gamma$ is homotopy contractible.\label{item:he7}
  \item Each nerve $N(a/D/b)_\gamma$ is a weakly contractible simplicial set.\label{item:he8}
  \end{enumerate}
\end{thm}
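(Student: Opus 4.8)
The plan is to treat this as two largely separate tasks glued by one nontrivial implication. The five ``derivator-independent'' conditions \ref{item:he1}, \ref{item:he5}, \ref{item:he6}, \ref{item:he7}, \ref{item:he8} are already shown equivalent in the preceding remark (modulo Heller and Cisinski's theorem), so the real content of the theorem is to splice in the three conditions \ref{item:he2}, \ref{item:he4}, \ref{item:he3} about special classes of derivators. Since those are obtained from \ref{item:he1} by restricting the class of test derivators, the only direction requiring a genuine argument is one implication \emph{out} of the weakest such condition, \ref{item:he3}.

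First the formal specializations. Since $\ho(\bC)$, $\ho(\cC)$, and $\ho(\mathbf{sSet}) = \ho(\infty\mathit{Gpd})$ are all derivators, \ref{item:he1} immediately implies each of \ref{item:he2}, \ref{item:he4}, \ref{item:he3}; and because $\mathbf{sSet}$ is a model category while $\infty\mathit{Gpd}$ is complete and cocomplete, both \ref{item:he2} and \ref{item:he4} specialize further to \ref{item:he3}. Thus, once we prove \ref{item:he3}$\Rightarrow$\ref{item:he8}, the cycle \ref{item:he1}$\Rightarrow$\ref{item:he3}$\Rightarrow$\ref{item:he8}$\Rightarrow\cdots\Rightarrow$\ref{item:he1} forces all three special conditions to be equivalent to the rest.

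For the equivalences \ref{item:he1}$\iff$\ref{item:he5}$\iff$\ref{item:he6}$\iff$\ref{item:he7}$\iff$\ref{item:he8} I would invoke the argument of the preceding remark: the refined reading of the proof of \autoref{thm:hoexchar} gives \ref{item:he1}$\iff$\ref{item:he5}; Heller and Cisinski's characterization applied to each $k_{a,b}$ gives \ref{item:he5}$\iff$\ref{item:he6}; the decomposition $k_{a,b} = \bigsqcup_\gamma k_{a,b,\gamma}$ together with the same characterization (a coproduct of functors is a homotopy equivalence iff each summand is) gives \ref{item:he5}$\iff$\ref{item:he7}, using that $k_{a,b,\gamma} = \pi_{(a/D/b)_\gamma}$ is a homotopy equivalence precisely when $(a/D/b)_\gamma$ is homotopy contractible; and \ref{item:he6}$\iff$\ref{item:he8} is elementary simplicial homotopy theory, since $N k_{a,b} = \bigsqcup_\gamma N(a/D/b)_\gamma \to \bigsqcup_\gamma \Delta^0$ is a weak equivalence iff each $N(a/D/b)_\gamma$ is weakly contractible.

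The one new step, \ref{item:he3}$\Rightarrow$\ref{item:he8}, is where I expect the work to be, and it hinges on the fact that the reduction in the proof of \autoref{thm:hoexchar} is valid in a single fixed derivator. For any $\D$, homotopy exactness of the mate in $\D$ is equivalent to invertibility in $\D$ of the transformations~\eqref{eq:hoextrans}, which by (Der1) split over $\gamma$ into the counits $(\pi_{(a/D/b)_\gamma})_!(\pi_{(a/D/b)_\gamma})^* \to 1$. Taking $\D = \ho(\mathbf{sSet})$, condition \ref{item:he3} says exactly that each of these counits is invertible in $\ho(\mathbf{sSet})$. I would then use the standard computation that $(\pi_A)_!$ computes homotopy colimits in $\ho(\mathbf{sSet})$, so that $(\pi_A)_!(\pi_A)^*(X)$ is the homotopy colimit of the constant $A$-diagram at $X$, naturally weakly equivalent to $NA \times X$, with the counit corresponding, under this identification, to the projection $NA \times X \to X$. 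This is an isomorphism in $\ho(\mathbf{sSet})$ (testing, e.g., already at $X = \Delta^0$) if and only if $NA$ is weakly contractible; applied to $A = (a/D/b)_\gamma$ this is precisely \ref{item:he8}. The main obstacle is therefore not any single hard theorem but the careful bookkeeping needed to run the \autoref{thm:hoexchar} reduction one derivator at a time and to identify the resulting counit with the nerve projection; granting the cited Heller--Cisinski characterization, everything else is formal.
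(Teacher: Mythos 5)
Your proposal is correct and takes essentially the same approach the paper intends: the theorem is stated as a consequence of the preceding remark, whose argument (the refined reading of the proof of \autoref{thm:hoexchar}, Heller--Cisinski's characterization, and the coproduct decomposition of $k_{a,b}$) gives exactly your equivalences among \ref{item:he1}, \ref{item:he5}, \ref{item:he6}, \ref{item:he7}, \ref{item:he8}, with the special-derivator conditions \ref{item:he2}, \ref{item:he4}, \ref{item:he3} sandwiched formally and the cycle closed through the single derivator $\ho(\mathbf{sSet})$. Your explicit identification of the counit with the projection $NA\times X\to X$ in $\ho(\mathbf{sSet})$ is precisely the bookkeeping the paper leaves implicit, so there is nothing genuinely different to compare.
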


\section{Detection lemmas}
\label{sec:kan-cart}

In this section we discuss several lemmas for detecting when certain diagrams are left or right Kan extensions.
Recall that a functor $u\colon A\to B$ is called a \textbf{sieve} if it is fully faithful, and for any morphism $b\to u(a)$ in $B$, there exists an $a'\in A$ with $u(a')=b$.  There is a dual 
notion of a \textbf{cosieve}.
As observed above, left or right Kan extension along a sieve or cosieve is fully faithful.

\begin{lem}[{\cite[Prop.~1.23]{groth:ptstab}}]\label{lem:extbyzero}
  If $u\colon A\to B$ is a sieve and \D is a derivator,  a diagram $X\in\D(B)$ is in the essential image of $u_*$ if and only if $X_b \in\D(\bbone)$ is a terminal object for all $b\notin u(A)$.
  Dually, if $u$ is a cosieve, $X\in\D(B)$ is in the essential image of $u_!$ if and only if $X_b$ is an initial object for all $b\notin u(A)$.
\end{lem}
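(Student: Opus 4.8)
The plan is to prove the first statement (sieves and $u_*$) directly, and then deduce the dual statement (cosieves and $u_!$) by applying it to the opposite derivator $\D\op$, under which a sieve in $B$ becomes a cosieve in $B\op$, the functor $u_*$ corresponds to $u_!$, and terminal objects of $\D(\bbone)$ correspond to initial objects of $\D\op(\bbone) = \D(\bbone)\op$. So it suffices to treat the sieve case.

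The key computational input is the pointwise formula for right Kan extensions. For any $b\in B$ and any $Z\in\D(A)$, axiom (Der4) applied to $b\colon\bbone\to B$ and $u\colon A\to B$ gives $b^* u_* Z \cong p_* q^* Z$, where $p$ and $q$ are the projections out of the comma category $(b/u)$, whose objects are pairs $(a\in A,\ \phi\colon b\to u(a))$. The crucial observation is that when $b\notin u(A)$ and $u$ is a sieve, this comma category is \emph{empty}: any object would supply a morphism $b\to u(a)$, and the sieve condition would then force $b=u(a')$ for some $a'$, contradicting $b\notin u(A)$. Since the limit over the empty diagram is the terminal object (as $p\colon\emptyset\to\bbone$ has $\D(\emptyset)$ terminal by (Der1), so that $p_*$ lands on the terminal object of $\D(\bbone)$), we conclude that $b^* u_* Z$ is terminal for every $b\notin u(A)$ and every $Z$.

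With this in hand the ``only if'' direction is immediate: if $X\cong u_* Y$, then $X_b = b^* u_* Y$ is terminal for all $b\notin u(A)$. For the ``if'' direction I would take $Y\coloneqq u^* X$ and show that the unit $\eta_X\colon X\to u_* u^* X$ is an isomorphism, which exhibits $X$ in the essential image of $u_*$. By (Der2) it suffices to check that $b^*\eta_X$ is an isomorphism for each $b\in B$, and there are two cases. If $b=u(a)$, then since $u$ is fully faithful the functor $u_*$ is fully faithful (as recalled in the excerpt), so the counit $u^* u_*\to\mathrm{id}$ is an isomorphism; using $b^* = a^* u^*$ and a triangle identity, $b^*\eta_X = a^*(u^*\eta_X)$ is inverse to the invertible counit at $u^* X$ and hence an isomorphism. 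If $b\notin u(A)$, then $b^* X = X_b$ is terminal by hypothesis, while $b^* u_* u^* X$ is terminal by the computation above applied to $Z = u^* X$; any morphism between terminal objects is an isomorphism, so $b^*\eta_X$ is again invertible.

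I expect the only delicate point to be the first case of this last step: one must check that the \emph{unit}, rather than merely some abstract isomorphism, is invertible on the objects of $u(A)$, which is exactly where the triangle identities relating the unit and the (invertible) counit of the adjunction $u^*\dashv u_*$ are needed. Everything else reduces to a direct application of (Der1), (Der2), and the pointwise formula (Der4), together with the elementary fact that a sieve admits no incoming morphisms from outside its image.
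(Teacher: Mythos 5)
Your proof is correct. The paper does not actually prove this lemma itself---it only cites \cite[Prop.~1.23]{groth:ptstab}---and your argument (the pointwise formula from (Der4), emptiness of the comma category $(b/u)$ forced by the sieve condition, fully faithfulness of $u_*$ to handle objects in $u(A)$, and the two-case check of the unit $X \to u_*u^*X$ via (Der2), with the cosieve case deduced by passing to $\D\op$) is essentially the standard proof given in that cited reference.
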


\begin{rmk}\label{rmk:ebziso}
  In particular, if $u\colon A\to B$ is
  a sieve and we have $X,Y\in\D(B)$ such that $X_b$ and $Y_b$ are
  terminal for $b\notin u(A)$, and moreover $u^*X \cong u^*Y$, then
  $X \cong u_!u^*X \cong u_!u^*Y \cong Y$.  This fact and its
  dual are very convenient, because one of the trickiest parts of
  working with derivators is that coherent diagrams which ``look the
  same'' (have the same underlying diagram) may not be
  isomorphic.  In the context of the inclusion of a (co)sieve, \autoref{lem:extbyzero} says that if
  the ``nontrivial parts'' of two coherent diagrams are isomorphic,
  then the entire diagrams are isomorphic.
\end{rmk}

Our second detection lemma is a version of the familiar theorem from category theory that limits and colimits in functor categories may be computed pointwise.

\begin{lem}[{\cite[Corollary~2.6]{groth:ptstab}}]\label{lem:shiftedkan}
  If $u\colon A\to B$ is fully faithful, then $X\in\shift\D C(B)$ lies in the essential image of $u_!$ (with respect to $\shift\D C$) if and only if for each $c\in C$, the diagram $X_c \in \D(B)$ lies in the essential image of $u_!$ (with respect to \D).
\end{lem}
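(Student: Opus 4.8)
The plan is to reduce the statement to a single base-change isomorphism together with (Der2). Since $u$ is fully faithful, so is $1_C\times u$, and hence the left Kan extension ``$u_!$'' with respect to $\shift\D C$ --- which is just $(1_C\times u)_!$ --- is fully faithful. For any fully faithful left adjoint, an object lies in the essential image precisely when the counit of the adjunction is an isomorphism on it. Thus $X\in\shift\D C(B) = \D(C\times B)$ lies in the essential image of $u_!$ (for $\shift\D C$) iff the counit $\epsilon\colon (1_C\times u)_!(1_C\times u)^* X \to X$ is an isomorphism in $\D(C\times B)$, and likewise each $X_c = (c\times 1_B)^* X$ lies in the essential image of $u_!$ (for $\D$) iff the counit $u_! u^* X_c \to X_c$ is an isomorphism in $\D(B)$.

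The key step is the base-change isomorphism $(c\times 1_B)^*(1_C\times u)_! \cong u_!(c\times 1_A)^*$ for each $c\in C$. I would obtain this from the homotopy exactness of the pullback square
\[\xymatrix@-.5pc{A \ar[r]^-{c\times 1_A}\ar[d]_u & C\times A\ar[d]^{1_C\times u}\\ B\ar[r]_-{c\times 1_B} & C\times B,}\]
whose mate transformation $q_!p^*\to v^*u_!$ is exactly this map. To verify homotopy exactness I would apply \autoref{thm:hoexchar}: for $a'=(c_0,a_0)$, $b_0$, and $\gamma = (\gamma_C,\gamma_B)\colon (c_0,u(a_0))\to (c,b_0)$, the category $(a'/A/b_0)_\gamma$ consists of triples $(d, \phi_A\colon a_0\to d, \psi\colon u(d)\to b_0)$ with $\psi\circ u(\phi_A) = \gamma_B$, and the triple $(a_0, 1_{a_0}, \gamma_B)$ is readily checked to be an initial object. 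Since any category with an initial object is homotopy contractible, \autoref{thm:hoexchar} applies and the square is homotopy exact.

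Given this, the forward direction is immediate: if $X\cong (1_C\times u)_! Y$ then $X_c\cong u_!(c\times 1_A)^* Y = u_!(Y_c)$ lies in the essential image. For the converse I would use (Der2): a morphism in $\D(C\times B)$ is an isomorphism iff its restriction along each $c\times 1_B$ is an isomorphism in $\D(B)$. Restricting the counit $\epsilon$ along $c\times 1_B$ and using the base-change isomorphism together with the identities $(1_C\times u)\circ(c\times 1_A) = c\times u = (c\times 1_B)\circ u$, one identifies $(c\times 1_B)^*\epsilon$ with the $\D$-counit $u_! u^* X_c \to X_c$, which is an isomorphism by hypothesis. Hence $\epsilon$ is an isomorphism and $X$ lies in the essential image. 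The only real bookkeeping obstacle is checking that this identification of counits is compatible with the base-change mate --- a routine but slightly fiddly application of the calculus of mates (\autoref{sec:mates}); everything else is formal once the square above is known to be homotopy exact.
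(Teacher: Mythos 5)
Your proposal is correct. Note that the paper does not actually prove this lemma --- it cites \cite[Corollary~2.6]{groth:ptstab} --- and your argument is essentially the standard one given there: (a) fully faithfulness of $u$ gives fully faithfulness of $(1_C\times u)_!$, so membership in the essential image is detected by invertibility of the counit; (b) the base-change square is homotopy exact, so restriction along $c\times 1_B$ intertwines the two counits (your ``fiddly'' mate compatibility is exactly one triangle identity, and it does check out); (c) (Der2), applied via $(c,b)^* = b^*(c\times 1_B)^*$, finishes the converse. The one place you genuinely deviate is step (b): the cited reference establishes pointwise computation of Kan extensions in shifted derivators as part of proving that $\shift\D C$ is a derivator, whereas you verify homotopy exactness of the square directly from \autoref{thm:hoexchar} by exhibiting the initial object $(a_0, 1_{a_0},\gamma_B)$ in each fiber $(a'/A/b_0)_\gamma$ --- a computation that is correct (note it never uses fully faithfulness of $u$, which enters only in step (a)), and which has the virtue of being self-contained within this paper's toolkit.
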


We now give a criterion to detect when sub-diagrams of a Kan extension are ``colimiting cocones'', generalizing a theorem of~\cite{franke:triang}.
For any category $A$, let $A^\rhd$ be the result of freely adjoining a new terminal object to $A$.
Call the new object $\infty$ and the inclusion $i\colon  A\into A^\rhd$.
Then the square 
\begin{equation}
  \vcenter{\xymatrix{
      A\ar@{=}[r]\ar[d] \drtwocell\omit &
      A\ar[d]^i\\
      \bbone\ar[r]_-\infty &
      A^\rhd.
    }}\label{eq:colimcocone}
\end{equation}is homotopy exact as a special case of a comma square.
Thus, left Kan extensions from $A$ into $A^\rhd$ are an alternative way to compute and characterize colimits over $A$.
We may refer to a coherent diagram in the image of $i_!$ as a \textbf{colimiting cocone}.  

The proof of the following lemma is an immediate generalization of~\cite[Prop.~3.10]{groth:ptstab}.
Its hypotheses may seem technical, but in practice, this is the lemma we reach for most often when it seems ``obvious'' that a certain cocone is colimiting.

\begin{lem}\label{lem:detectionplus}
  Let $A\in\cCat$, and let $u\colon C\to B$ and $v\colon A^\rhd\to B$ be functors.
  Suppose that there is a full subcategory $B'\subseteq B$ such that
  \begin{itemize}
  \item $u(C) \subseteq B'$ and $v(\infty) \notin B'$;
  \item $vi(A) \subseteq B'$; and %$vi\colon A\to B$ factors through the inclusion $B'\subseteq B$; and
  \item the functor $A \to B' / v(\infty)$ induced by $v$ has a left adjoint.
  \end{itemize}
  Then for any derivator \D and any $X\in\D(C)$, the diagram $v^* u_! X$ is in the essential image of $i_!$.
  In particular, $(v^* u_! X)_\infty$ is the colimit of $i^* v^* u_! X$.
\end{lem}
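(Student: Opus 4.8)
The plan is to show that $v^* u_! X$ is a \emph{colimiting cocone} in the sense of the discussion around~\eqref{eq:colimcocone}, by reducing the claim to a homotopy finality statement and then feeding the adjunction hypothesis into \autoref{thm:hofinal}. First I would reduce to a single comparison map. Since $\infty$ is terminal in $A^\rhd$, the inclusion $i\colon A\into A^\rhd$ is fully faithful, so $i^*i_!\cong 1$ and the counit $i_!i^*(v^*u_!X)\to v^*u_!X$ is automatically an isomorphism after restriction along $i$. By (Der2) it is an isomorphism (equivalently, $v^*u_!X$ lies in the essential image of $i_!$) if and only if it is one at $\infty$. As $(i/\infty)\cong A$, axiom (Der4) identifies $\infty^*i_!i^*(v^*u_!X)$ with $(\pi_A)_! i^*(v^*u_!X)$, so it suffices to prove that the canonical map
\[ (\pi_A)_! \, i^*(v^*u_!X) \to (v^*u_!X)_\infty \]
is an isomorphism.

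Next I would rewrite both sides as colimits over comma categories. Writing $D\coloneqq (u/v(\infty))$ with projection $p_D\colon D\to C$, axiom (Der4) gives $(v^*u_!X)_\infty = v(\infty)^*u_!X \cong (\pi_D)_!p_D^*X$. Writing $E\coloneqq (u/vi)$ with projections $p_E\colon E\to C$ and $q_E\colon E\to A$, homotopy exactness of the comma square for $u$ and $vi$ gives $i^*(v^*u_!X)=(vi)^*u_!X\cong (q_E)_!p_E^*X$, whence $(\pi_A)_! \, i^*(v^*u_!X)\cong (\pi_E)_!p_E^*X$. There is an evident functor $j\colon E\to D$ sending $(c,a,u(c)\to v(a))$ to $(c,u(c)\to v(a)\to v(\infty))$; it satisfies $p_D\circ j=p_E$ and $\pi_D\circ j=\pi_E$, so that $(\pi_E)_!p_E^*X=(\pi_D)_!j_!j^*p_D^*X$ and the displayed comparison map is carried, via the counit $j_!j^*\to 1$, to the map $(\pi_D)_!j_!j^*p_D^*X\to (\pi_D)_!p_D^*X$. (Checking that this is indeed the canonical map is a routine pasting of mates in the sense of \autoref{sec:mates}.) Consequently it is enough to show that $j$ is homotopy final.

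The main work is this last point. By \autoref{thm:hofinal}, $j$ is homotopy final provided each comma category $(d/j)$, for $d=(c_0,\,g\colon u(c_0)\to v(\infty))\in D$, is homotopy contractible; I would prove the stronger statement that $(d/j)$ has an initial object. Here the adjunction hypothesis enters: let $\Phi\colon A\to B'/v(\infty)$ be the functor induced by $v$ and $L\dashv \Phi$ its left adjoint. Since $u(c_0)\in B'$, the pair $(u(c_0),g)$ is an object of $B'/v(\infty)$; set $a_0\coloneqq L(u(c_0),g)$ with unit $\eta\colon u(c_0)\to v(a_0)$ satisfying $v(a_0\to\infty)\circ\eta = g$. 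Then $(c_0,a_0,\eta)\in E$ satisfies $j(c_0,a_0,\eta)=(c_0,g)=d$, so it lifts $d$ through $j$ via the identity, giving an object $\omega_0\in (d/j)$. Unwinding the definitions, a morphism from $\omega_0$ to an arbitrary object $(c,a,h\colon u(c)\to v(a),\phi\colon c_0\to c)$ of $(d/j)$ is precisely a morphism $\chi\colon a_0\to a$ in $A$ with $v(\chi)\circ\eta = h\circ u(\phi)$; and since $h\circ u(\phi)$ is a morphism $(u(c_0),g)\to\Phi(a)$ in $B'/v(\infty)$, the adjunction bijection $\mathrm{Hom}_{B'/v(\infty)}((u(c_0),g),\Phi a)\cong\mathrm{Hom}_A(a_0,a)$ supplies a \emph{unique} such $\chi$.

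Thus $\omega_0$ is initial, $(d/j)$ is homotopy contractible, and $j$ is homotopy final, which completes the proof. The only genuinely non-formal ingredient is this extraction of an initial object from the adjunction, which is where I expect the main obstacle to lie; everything else is an application of the homotopy-exactness calculus assembled in \S\ref{sec:hoexactsq}.
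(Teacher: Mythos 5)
Your proof is correct, and after its first step it takes a genuinely different route from the paper's. Both arguments begin with the same reduction: since $i$ is fully faithful, the counit $i_!i^*(v^*u_!X)\to v^*u_!X$ is invertible over $A$, so by (Der2) and the homotopy exact square \eqref{eq:colimcocone} everything comes down to the single comparison map $(\pi_A)_!\,i^*(v^*u_!X)\to (v^*u_!X)_\infty$. From there the paper stays 2-categorical: it writes the relevant square as the horizontal pasting of the square for $\Phi\colon A\to B'/v(\infty)$ over $\bbone$, the comma square for $B'\into B$ and $v(\infty)$, and the identity square on $B'\into B$, and invokes, respectively, homotopy finality of right adjoints, (Der4), and full faithfulness of $B'\into B$ (this last step is where $u(C)\subseteq B'$ enters, since it puts $u_!X$ in the essential image of extension along $B'\into B$). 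You instead transport everything to comma categories over $B$, reduce to homotopy finality of $j\colon (u/vi)\to(u/v(\infty))$, and verify that via \autoref{thm:hofinal} by exhibiting an initial object of each $(d/j)$ built from the unit of $L\dashv\Phi$. Your reduction is sound --- the pasting-of-mates identification you call routine does hold, because the vertical pasting of the comma square for $(u/vi)$ with the square in question equals the horizontal pasting of the strictly commuting square on $j$ with the comma square for $(u/v(\infty))$ --- and the initial-object argument is correct; the one point you elide is that \emph{every} $\chi\colon a_0\to a$ gives a morphism $\Phi(\chi)\circ\eta$ in $B'/v(\infty)$ (by terminality of $\infty$ in $A^\rhd$ and fullness of $B'$), which is what lets the adjunction bijection account for \emph{all} candidate morphisms in $(d/j)$. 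The trade-off: the paper's factorization is shorter and avoids comma-category bookkeeping, while yours makes the role of the adjunction hypothesis completely explicit. In fact your hom-set computation proves slightly more, namely that $j$ itself admits a left adjoint $(c_0,g)\mapsto(c_0,L(u(c_0),g),\eta)$; had you noted this, you could have finished by the same citation the paper uses (right adjoints are homotopy final) instead of checking the comma categories by hand.
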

\begin{proof}
  We want to show that the mate-transformation associated to the square
  \[\vcenter{\xymatrix@-.5pc{
      A\ar[r]^{v i}\ar[d]_i &
      B\ar@{=}[d]\\
      A^\rhd\ar[r]_v &
      B
    }}\]
  is an isomorphism when evaluated at $u_! X$.
  By~\cite[Lemma~1.21]{groth:ptstab}, it suffices to show this for the pasted square
  \[\vcenter{\xymatrix@-.5pc{
      A\ar[r]\ar[d] \drtwocell\omit &
      A\ar[r]^{v i}\ar[d]_i &
      B\ar@{=}[d]\\
      \bbone \ar[r]_-\infty &
      A^\rhd\ar[r]_v &
      B.
    }}\]
  But this square is also equal to the pasting composite
  \[\vcenter{\xymatrix@-.5pc{
      A\ar[r]\ar[d]  &
      B'/v(\infty)\ar[r]\ar[d] \drtwocell\omit &
      B' \ar[r]\ar[d] &
      B\ar@{=}[d]\\
      \bbone\ar@{=}[r] &
      \bbone \ar[r]_{v(\infty)} &
      B\ar@{=}[r] &
      B.
    }}
  \]
  Now the mates associated to each of these three squares are isomorphisms: the left-hand square by the fact that right adjoints are homotopy final, the middle one by (Der4), and the right-hand one because $B'\into B$ is fully faithful. 
\end{proof}

Franke's version of this was the special case for cocartesian squares.
Let $\Box$ denote the category $\bbtwo\times\bbtwo$
\[\vcenter{\xymatrix@-.5pc{
    (0,0)\ar[r]\ar[d] &
    (0,1)\ar[d]\\
    (1,0)\ar[r] &
    (1,1).
  }}\]
Let $\ulcorner$ and $\lrcorner$ denote the full subcategories $\Box \setminus \{(1,1)\}$ and  $\Box \setminus \{(0,0)\}$, respectively, with inclusions $i_\ulcorner\colon \mathord{\ulcorner} \into\Box$ and $i_\lrcorner\colon \mathord{\lrcorner} \into\Box$.
Since $i_\ulcorner$ and $i_\lrcorner$ are fully faithful, so are~$(i_\ulcorner)_!$ and $(i_\lrcorner)_*$.

\begin{defn}
  A coherent diagram $X\in\D(\Box)$ is \textbf{cartesian} if it is in the essential image of $(i_\lrcorner)_*$, and \textbf{cocartesian} if it is in the essential image of $(i_\ulcorner)_!$.
\end{defn}

Taking $A=\ulcorner$ in~\eqref{eq:colimcocone} implies that if $X\in \D(\ulcorner)$ looks like $(y \ot x \to z)$ and $w = (\pi_\ulcorner)_!(X)$ is its pushout, then there is a cocartesian square
\begin{equation}\label{eq:cocartpo}
  \vcenter{\xymatrix@-.5pc{
      x\ar[r]\ar[d] &
      z\ar[d]\\
      y\ar[r] &
      w
      }}
\end{equation}
and conversely, if there is a cocartesian square~\eqref{eq:cocartpo}, then $w\cong (\pi_\ulcorner)_!(X)$.

\begin{rmk}\label{rmk:cocart-cornerdet}
  If we have two cocartesian squares $X,Y\in \D(\Box)$ such that
  $(i_\ulcorner)^*X \cong (i_\ulcorner)^*Y$, then in fact $X\cong Y$,
  and in particular $X_{1,1} \cong Y_{1,1}$.
\end{rmk}

Since cartesian and cocartesian squares play an essential role in the theory of pointed and stable derivators, it is useful to identify them in larger diagrams.
This is the purpose of Franke's lemma, which we can now derive.

\begin{lem}\label{lem:detection}
  Suppose $u\colon C\to B$ and $v\colon \Box \to B$ are functors, with $v$ injective on objects, and let $b = v(1,1)\in B$.
  Suppose furthermore that $b\notin u(C)$, and that the functor $\mathord\ulcorner \to (B\setminus b)/b$ induced by $v$ has a left adjoint.
  Then for any derivator \D and any $X\in \D(C)$, the square $v^* u_! X$ is cocartesian.
\end{lem}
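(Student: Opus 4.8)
The plan is to obtain this as the special case $A = \mathord\ulcorner$ of \autoref{lem:detectionplus}. The first thing I would check is that freely adjoining a terminal object to the span $\mathord\ulcorner$ reproduces the full square: in $\Box = \bbtwo\times\bbtwo$ the object $(1,1)$ is a product of terminal objects and hence terminal, so deleting it and then re-adjoining a terminal object recovers $\Box$. Concretely there is a canonical isomorphism $\mathord\ulcorner^\rhd \cong \Box$ carrying the adjoined object $\infty$ to $(1,1)$ and the inclusion $i\colon \mathord\ulcorner \into \mathord\ulcorner^\rhd$ to $i_\ulcorner\colon\mathord\ulcorner\into\Box$. Under this identification the given $v\colon\Box\to B$ is precisely a functor $\mathord\ulcorner^\rhd\to B$ with $v(\infty)=b$, placing us in the situation of \autoref{lem:detectionplus}.

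Next I would verify the three hypotheses of \autoref{lem:detectionplus}, taking the witnessing full subcategory to be $B'\coloneqq B\setminus b$. The requirements $u(C)\subseteq B'$ and $v(\infty)\notin B'$ together say exactly that $b\notin u(C)$, which we have assumed. The requirement $vi(\mathord\ulcorner)\subseteq B'$ holds because $v$ is injective on objects: none of $v(0,0)$, $v(0,1)$, $v(1,0)$ can equal $b=v(1,1)$, so their images lie in $B\setminus b$. Finally, the induced functor $\mathord\ulcorner\to B'/v(\infty)$ is literally the functor $\mathord\ulcorner\to (B\setminus b)/b$ of our hypothesis, which we have assumed to have a left adjoint. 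Thus all hypotheses are met, and \autoref{lem:detectionplus} tells us that $v^* u_! X$ lies in the essential image of $i_! = (i_\ulcorner)_!$; since this is precisely the definition of cocartesian, we are done.

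I do not expect a genuine obstacle, as the entire argument is the bookkeeping of fitting this statement into the more general \autoref{lem:detectionplus}. The only points needing a moment's attention are the identification $\mathord\ulcorner^\rhd\cong\Box$ with $\infty\leftrightarrow(1,1)$, and the observation that $B\setminus b$ is a bona fide full subcategory (the full subcategory on all objects of $B$ save $b$), so that the slice $(B\setminus b)/b$ is exactly the category $B'/v(\infty)$ that \autoref{lem:detectionplus} asks about.
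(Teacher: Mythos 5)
Your proof is correct and is essentially the paper's own argument: both apply \autoref{lem:detectionplus} with $A=\mathord\ulcorner$ and $B'=B\setminus b$, using the identification $\Box\cong\mathord\ulcorner^\rhd$ and the injectivity of $v$ to ensure $v(\mathord\ulcorner)\subseteq B'$. Your write-up simply spells out the hypothesis-checking that the paper leaves implicit.
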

\begin{proof}
  Since $\Box \cong (\mathord{\ulcorner})^\rhd$, we can apply \autoref{lem:detectionplus} with $A=\ulcorner$ and $B' = B\setminus b$.
  (Injectivity of $v$ is needed to ensure that $v(A)\subseteq B'$.)
\end{proof}

Franke's lemma immediately implies the usual ``pasting lemma'' for cocartesian squares.
Let $\boxbar$ denote the category $\bbtwo\times\bbthree$
\[\vcenter{\xymatrix@-.5pc{
    (0,0)\ar[r]\ar[d] &
    (0,1)\ar[r]\ar[d] &
    (0,2)\ar[d]\\
    (1,0)\ar[r] &
    (1,1)\ar[r] &
    (1,2).
  }}
\]
Let $\iota_{j k}$ denote the functor $\Box\to\boxbar$ induced by the identity of $\bbtwo$ on the first factor and the functor $\bbtwo\to\bbthree$ on the second factor which sends $0$ to $j$ and $1$ to $k$.

\begin{cor}[{\cite[Prop.~3.13]{groth:ptstab}}]\label{thm:pasting}
  If $X\in\D(\boxbar)$ is such that $\iota_{01}^*X$ is cocartesian, then $\iota_{02}^*X$ is cocartesian if and only if $\iota_{12}^*X$ is cocartesian.
\end{cor}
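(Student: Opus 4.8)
The plan is to reinterpret each of the three cocartesianness conditions as a statement about left Kan extension along a two-step filtration of $\boxbar$, and then play these off against each other using transitivity of Kan extensions. I would write $E \subseteq F \subseteq \boxbar$ for the full subcategories $E = \boxbar\setminus\{(1,1),(1,2)\}$ (the top row together with $(1,0)$) and $F = \boxbar\setminus\{(1,2)\}$. Both inclusions are fully faithful, so the associated left Kan extensions are fully faithful and compose: $(F\hookrightarrow\boxbar)_!(E\hookrightarrow F)_! \cong (E\hookrightarrow\boxbar)_!$. The first task is to record three ``Kan $=$ cocartesian'' dictionary entries: (a) $X|_F$ is left Kan extended from $E$ iff $\iota_{01}^*X$ is cocartesian; (b) $X$ is left Kan extended from $E$ all the way to $\boxbar$ iff $\iota_{01}^*X$ and $\iota_{02}^*X$ are \emph{both} cocartesian; and (c), assuming (a), $X$ is left Kan extended from $F$ iff $\iota_{12}^*X$ is cocartesian.

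Each entry is checked one new object at a time, the content being a homotopy-finality computation identifying the relevant slice colimit with the appropriate pushout. The only new object of $F$ over $E$ is $(1,1)$, whose slice $E/(1,1)$ is exactly the left span $(0,1)\ot(0,0)\to(1,0)$, giving (a). For the object $(1,2)$: inside $E/(1,2)$ the outer span $(0,2)\ot(0,0)\to(1,0)$ is homotopy final (the inclusion is a right adjoint, absorbing $(0,1)$, hence homotopy final), producing the outer pushout and so (b); while inside $F/(1,2)$ the right span $(0,2)\ot(0,1)\to(1,1)$ is homotopy final, producing the right pushout and so (c). I would emphasize that these finality statements are \emph{precisely} the left-adjoint hypotheses of Franke's detection lemma, so that the ``Kan $\Rightarrow$ cocartesian'' halves of (b) and (c) are direct applications of \autoref{lem:detection} with $v=\iota_{02}$ and $v=\iota_{12}$ respectively, the required left adjoints being exactly those just described.

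The main obstacle is the reverse ``cocartesian $\Rightarrow$ Kan extension'' half of each entry, since the detection lemmas only manufacture cocartesian squares out of Kan extensions. I would handle this by comparison: form the honest Kan extension, say $X'' = (F\hookrightarrow\boxbar)_! X|_F$, so that $\iota_{12}^*X''$ is cocartesian by \autoref{lem:detection} and $X''|_F \cong X|_F$. The canonical map $X'' \to X$ is then an isomorphism on $F$ and induces a map of cocartesian squares $\iota_{12}^*X'' \to \iota_{12}^*X$ which is an isomorphism on $\ulcorner$, hence an isomorphism at $(1,2)$ by \autoref{rmk:cocart-cornerdet}; thus $X$ is Kan extended from $F$. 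With the dictionary in hand the conclusion is bookkeeping, under the standing hypothesis that $\iota_{01}^*X$ is cocartesian: by (a), $X|_F$ is Kan extended from $E$, so transitivity gives that $X$ is Kan extended from $E$ iff it is Kan extended from $F$, and combining this with (b) and (c) yields
\[ \iota_{02}^*X \text{ cocartesian} \iff X \text{ Kan from } E \iff X \text{ Kan from } F \iff \iota_{12}^*X \text{ cocartesian}, \]
which is exactly the assertion.
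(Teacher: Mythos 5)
Your proof is correct, and at its core it is the paper's own argument with extra scaffolding: the paper works with the single four-object subcategory $E$ (called $A$ there), uses the detection lemma to make all three squares of $j_!j^*X$ cocartesian, and then shows the counit $j_!j^*X\to X$ is invertible at $(1,1)$ and $(1,2)$ by precisely your comparison of cocartesian squares that agree on $\ulcorner$ --- your intermediate category $F$, the three-entry dictionary, and the transitivity of Kan extensions just repackage that one-step counit argument. One caveat (which the paper's own proof shares): for the outer square, \autoref{lem:detection} does not literally apply with $v=\iota_{02}$ and $B=\boxbar$, since $(1,1)$ admits no map to the outer span and hence the functor $\ulcorner\to(\boxbar\setminus(1,2))/(1,2)$ has no left adjoint; the computation you actually performed --- the outer span is reflective in $E/(1,2)=E$, with reflection absorbing $(0,1)\mapsto(0,2)$ --- is an instance of the more general \autoref{lem:detectionplus} with $B'=E$, which is the citation both you and the paper should be making at that step.
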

\begin{proof}
  Let $A$ be the full subcategory 00-01-02-10 of $\boxbar$, with $j\colon A\into\boxbar$ the inclusion.
  Then \autoref{lem:detection} implies that $\iota_{01}^*j_!Y$ and $\iota_{02}^*j_!Y$ and $\iota_{12}^*j_!Y$ are cocartesian for any $Y\in\D(A)$.
  Thus, for $X\in\D(\boxbar)$ with $\iota_{01}^*X$ cocartesian, it will suffice to show that cocartesianness of $\iota_{02}^*X$ and of $\iota_{12}^*X$ each imply that $\epsilon\colon j_!j^*X\to X$ is an isomorphism.
  Since $j$ is fully faithful, by (Der2) it suffices to check this at (1,1) and (1,2).
  However, cocartesianness of $\iota_{01}^*X$ implies that $\epsilon_{11}$ is an isomorphism, while cocartesianness of $\iota_{02}^*X$ and of $\iota_{12}^*X$ each imply that $\epsilon_{12}$ is an isomorphism.
\end{proof}

Here is another useful consequence of the general form of \autoref{lem:detectionplus}.

\begin{cor}\label{thm:coprod-pushout}
  Coproducts in a derivator are the same as pushouts over the initial object.
  More precisely, for any objects $x$ and $y$ there is a cocartesian square
  \[\vcenter{\xymatrix@-.5pc{
      \emptyset \ar[r]\ar[d] &
      x\ar[d]\\
      y\ar[r] &
      x\sqcup y.
    }}
  \]
\end{cor}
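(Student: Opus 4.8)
The plan is to realize the desired square as a left Kan extension out of a discrete two-object category, so that the coproduct appears as the colimit computed at the cone point of a cocartesian square. Let $C = \{(0,1),(1,0)\}$ be the discrete full subcategory of $\Box$ on those two objects. Since $C\cong\bbone\sqcup\bbone$, axiom (Der1) gives $\D(C)\cong\D(\bbone)\times\D(\bbone)$, so the pair $(x,y)$ determines an object $X\in\D(C)$ with $X_{(0,1)}=x$ and $X_{(1,0)}=y$; and because $(\pi_C)_!$ is the colimit over a discrete category, $(\pi_C)_!X$ is the coproduct $x\sqcup y$ (which exists by (Der1) together with (Der3)). I will produce the square as $u_!X$ for the inclusion $u\colon C\into\Box$, and compute its four corners.

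First I would factor $u$ as $C\xto{j}\mathord{\ulcorner}\xto{i_\ulcorner}\Box$, so that $u_! \cong (i_\ulcorner)_!\,j_!$ and the resulting diagram $W \coloneqq (i_\ulcorner)_!\,j_!X\in\D(\Box)$ is cocartesian by the very definition of that notion. The complement of $C$ in $\mathord{\ulcorner}$ is the single object $(0,0)$, which is a sieve (it is the initial object and receives no nonidentity maps), so $j$ is a cosieve. Hence \autoref{lem:extbyzero} identifies $(j_!X)_{(0,0)}$ as an initial object $\emptyset$, while full faithfulness of $j$ gives $(j_!X)_{(0,1)}=x$ and $(j_!X)_{(1,0)}=y$; thus $j_!X$ has the form $x\ot\emptyset\to y$. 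Restricting $W$ along $i_\ulcorner$ recovers $j_!X$, so the upper-left, upper-right, and lower-left corners of $W$ are $\emptyset$, $x$, and $y$, as desired.

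It then remains to identify the fourth corner $W_{(1,1)}$. Since \eqref{eq:colimcocone} shows that left Kan extension into $\mathord{\ulcorner}^\rhd=\Box$ computes the colimit at the cone point, we have $W_{(1,1)}=(\pi_\ulcorner)_!\,j_!X$. Now functoriality of left Kan extensions gives $(\pi_\ulcorner)_!\,j_! \cong (\pi_\ulcorner\circ j)_!$, and $\pi_\ulcorner\circ j=\pi_C$, so $W_{(1,1)}\cong(\pi_C)_!X = x\sqcup y$. Assembling the corners, $W$ is exactly a cocartesian square of the claimed form.

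The only real care needed is the bookkeeping of the three nested shapes $C\subseteq\mathord{\ulcorner}\subseteq\Box$ and the two successive extensions; the one genuinely load-bearing identity is $\pi_\ulcorner\circ j=\pi_C$, which is what forces the pushout over $\mathord{\ulcorner}$ to collapse to the coproduct over the discrete category $C$. (Alternatively, cocartesianness could be obtained directly from Franke's lemma, \autoref{lem:detection}, applied with $B=\Box$ and $v=1_\Box$: its sole nontrivial hypothesis, that $\mathord{\ulcorner}\to(\Box\setminus(1,1))/(1,1)$ admit a left adjoint, holds because that functor is an isomorphism of categories. But one still has to compute the corner values as above, so the factorization argument is the more self-contained route.)
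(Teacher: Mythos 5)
Your proof is correct, and the underlying construction coincides with the paper's: both realize the square as the left Kan extension $u_!X$ of the pair $(x,y)$ along the inclusion $u$ of the discrete subcategory $C=\{(0,1),(1,0)\}$ into $\Box$, and both obtain cocartesianness from the factorization of $u_!$ through $\D(\ulcorner)$. Where you differ is in how the remaining two corners are identified. The paper treats the statement as a direct application of \autoref{lem:detectionplus} (taking $B=\Box$, $C=B'=A=\{(0,1),(1,0)\}$, and $A^\rhd=\mathord{\lrcorner}$), which immediately identifies the lower-right corner as the colimit of $(x,y)$ over the discrete category, and it identifies the initial upper-left corner via (Der4), since the relevant comma category $(u/(0,0))$ is empty. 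You instead avoid \autoref{lem:detectionplus} altogether: you get the $(0,0)$-corner by observing that $j\colon C\to\mathord{\ulcorner}$ is a cosieve and invoking \autoref{lem:extbyzero}, and the $(1,1)$-corner from the homotopy exact square \eqref{eq:colimcocone} combined with the functoriality of left Kan extensions, $(\pi_\ulcorner)_!\,j_!\cong(\pi_\ulcorner\circ j)_!=(\pi_C)_!$. Both verifications are valid; yours is a bit longer but self-contained, resting only on the basic (co)sieve lemma and composition of left adjoints, whereas the paper's is a one-liner precisely because \autoref{lem:detectionplus} has just been established (the statement appears there as a corollary of that lemma). Your parenthetical alternative via Franke's lemma (\autoref{lem:detection}) is also correct, and as you note it still leaves the corner computations to be done by hand.
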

\begin{proof}
  Taking $B=\Box$ and $C = B' = A = \{(1,0),(0,1)\}$ and $A^\rhd = \mathord\lrcorner$ in \autoref{lem:detectionplus}, with $X = (x,y) \in \D(C) \cong \D(\bbone) \times \D(\bbone)$, yields the desired square.
  Its lower-right corner is $x\sqcup y$ by \autoref{lem:detectionplus}, and its upper-left corner is initial by (Der4),
  and it is cocartesian since the left Kan extension from $C$ to $B$ factors through $\ulcorner$.
\end{proof}

Finally, the following lemma says that squares which are constant in one direction are (co)cartesian.
From now on we will use this observation without comment.

\begin{lem}[{\cite[Prop.~3.12(2)]{groth:ptstab}}]\label{thm:trivial-pushout}
  Let $\pi_\bbtwo\colon \Box \to \bbtwo$ denote a projection (either one).
  Then any square in the image of $(\pi_\bbtwo)^*$ is cartesian and cocartesian.
\end{lem}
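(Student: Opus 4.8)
The plan is to realize the horizontally (or vertically) constant square $(\pi_\bbtwo)^*Y$ simultaneously as a left Kan extension along an edge of $\Box$ contained in $\mathord\ulcorner$, and as a right Kan extension along an edge contained in $\mathord\lrcorner$; its membership in the essential images of $(i_\ulcorner)_!$ and $(i_\lrcorner)_*$ is then automatic. It suffices to treat the first projection $\pi=\pi_\bbtwo\colon\Box\to\bbtwo$, since the second differs from it by the factor-swapping automorphism of $\Box$, which fixes the diagonal and preserves both $\mathord\ulcorner$ and $\mathord\lrcorner$, hence carries cartesian and cocartesian squares to cartesian and cocartesian squares.

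First I would introduce the two edge-inclusions $s_0,s_1\colon\bbtwo\to\Box$ given by $s_0(i)=(i,0)$ and $s_1(i)=(i,1)$, and record the elementary adjunctions $s_0\dashv\pi\dashv s_1$ in \cCat. These are a one-line check on hom-sets: a morphism out of $(i,0)$, respectively into $(i,1)$, is constrained only in the first coordinate, which is exactly the condition read off by $\pi$. Applying the $2$-functor \D to these adjunctions — and using that a $2$-functor which is contravariant on $1$-cells interchanges left and right adjoints — yields $\pi^*\dashv s_0^*$ and $s_1^*\dashv\pi^*$ in \cCAT. By uniqueness of adjoints these identify $\pi^*$ with $(s_0)_!$ and with $(s_1)_*$ respectively; that is, $(\pi_\bbtwo)^*Y\cong (s_0)_!Y\cong (s_1)_*Y$ for every $Y\in\D(\bbtwo)$.

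It then remains to observe that $s_0$ factors as $\bbtwo\xto{s_0'}\mathord\ulcorner\into\Box$, since its image $\{(0,0),(1,0)\}$ avoids $(1,1)$; pseudofunctoriality of left Kan extensions gives $(s_0)_!\cong (i_\ulcorner)_!(s_0')_!$, so $(\pi_\bbtwo)^*Y$ lies in the essential image of $(i_\ulcorner)_!$ and is cocartesian. Dually $s_1$ factors through $i_\lrcorner$, because its image $\{(0,1),(1,1)\}$ avoids $(0,0)$, whence $(s_1)_*\cong (i_\lrcorner)_*(s_1')_*$ and $(\pi_\bbtwo)^*Y$ is cartesian. The one point requiring care is the bookkeeping of adjoint variance under \D: it is tempting but wrong to conclude $\pi^*\cong (s_0)_*$, and the correct handedness $\pi^*\cong (s_0)_!$ is precisely what places the square on the colimit side. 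I would sanity-check this against the represented derivator $y(\mathbf{Set})$, where $(s_0)_!Y$ and $(s_1)_*Y$ are computed pointwise from the relevant comma categories and are visibly the constant square $(a,b)\mapsto Y_a$.
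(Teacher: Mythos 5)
Your proof is correct. Note that for this lemma the paper supplies no argument of its own---it is quoted from \cite[Prop.~3.12(2)]{groth:ptstab}---so there is no internal proof to compare against; but your argument is complete and is essentially the standard one behind the cited result. All the delicate points check out: the adjunctions $s_0 \dashv \pi_{\bbtwo} \dashv s_1$ in \cCat (immediate since $0$ is initial and $1$ terminal in \bbtwo), the fact that the 2-functor $\D\colon\cCat\op\to\cCAT$ converts $f\dashv g$ into $g^*\dashv f^*$, so that $\pi_{\bbtwo}^*\dashv s_0^*$ and $s_1^*\dashv \pi_{\bbtwo}^*$, whence $\pi_{\bbtwo}^*\cong (s_0)_!$ and $\pi_{\bbtwo}^*\cong (s_1)_*$ by uniqueness of adjoints; and the factorizations of $s_0$ through $i_\ulcorner$ and of $s_1$ through $i_\lrcorner$, which place $\pi_{\bbtwo}^*Y$ in the essential images of $(i_\ulcorner)_!$ and $(i_\lrcorner)_*$. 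You also correctly flagged the one genuine pitfall, the handedness of the adjoints. One small streamlining: rather than reducing the second projection to the first via the swap automorphism $\sigma$ (which does require the routine check that $\sigma^*$ preserves cartesian and cocartesian squares, via the invertible mates attached to $\sigma\circ i_\ulcorner = i_\ulcorner\circ\sigma'$), you can simply rerun the identical argument with the vertical edge inclusions $t_0(j)=(0,j)$ and $t_1(j)=(1,j)$, which satisfy $t_0\dashv\pi\dashv t_1$ for the second projection $\pi$ and factor through $\mathord\ulcorner$ and $\mathord\lrcorner$ respectively.
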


\section{Pointed derivators and stable derivators}
\label{sec:pointedstable}

In this section we discuss pointed and stable derivators.
Parts of this section are from~\cite{groth:ptstab}, but we also introduce some convenient new results.

\begin{defn}
  A derivator \D is \textbf{pointed} if the category $\D(\bbone)$ has a zero object (an object which is both initial and terminal).
\end{defn}

Since $\pi_A^*\colon \D(\bbone) \to\D(A)$ is both a left and a right adjoint, it preserves zero objects.
Hence, in a pointed derivator each category $\D(A)$ also has a zero object.

\begin{egs}
  A complete and cocomplete category \bC is pointed if and only if $y(\bC)$ is so.
  If a model category or $(\infty,1)$-category is pointed, then so is its homotopy derivator.
  Finally, if \D is pointed, so are $\shift{\D}{B}$ and $\D\op$.
\end{egs}

\autoref{lem:extbyzero} is especially important for pointed derivators, in which case its two characterizations become identical since initial and terminal objects are the same.
Thus, in this case, when $u$ is a sieve we refer to $u_*$ as an \textbf{extension by zero functor}, and similarly for $u_!$ when $u$ is a cosieve.

In a pointed derivator \D,  the \textbf{suspension} functor $\Sigma\colon \D(\bbone) \to\D(\bbone)$ is the composite
\[ \D(\bbone) \xto{(0,0)_*} \D(\mathord{\ulcorner}) \xto{(i_\ulcorner)_!} \D(\Box) \xto{(1,1)^*} \D(\bbone). \]
Since $(0,0)$ is a sieve in $\ulcorner$, the functor ${(0,0)_*}$ is an extension by zero; thus for any $x\in\D(\bbone)$ we have a cocartesian square of the form
\begin{equation}
  \vcenter{\xymatrix@-.5pc{
      x\ar[r]\ar[d] &
      0\ar[d]\\
      0\ar[r] &
      \Sigma x.
    }}\label{eq:susp}
\end{equation}
More generally, any cocartesian square of the form
\begin{equation}
\vcenter{\xymatrix@-.5pc{
    x\ar[r]\ar[d] &
    0\ar[d]\\
    0\ar[r] &
    w
  }}\label{eq:cocart}
\end{equation}
induces a canonical isomorphism $w\cong \Sigma x$.
Note that by \autoref{rmk:ebziso} and \autoref{rmk:cocart-cornerdet}, any two such cocartesian squares containing the same object $x$ are isomorphic.

It is very important to note that  if we restrict a cocartesian square~\eqref{eq:cocart} along the automorphism $\sigma\colon\Box\to\Box$ which swaps $(0,1)$ and $(1,0)$, we obtain a \emph{different} cocartesian square (with the same underlying diagram), and hence a \emph{different} isomorphism $w\cong \Sigma x$.
The relationship between the two is the following.

\begin{lem}
  In any pointed derivator, $\Sigma x$ is a cogroup object, and the composite $\Sigma x \toiso w \toiso \Sigma x$ of the two isomorphisms arising from a cocartesian square~\eqref{eq:cocart} and its $\sigma$-transpose gives the ``inversion'' morphism of $\Sigma x$.
\end{lem}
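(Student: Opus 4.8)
The plan is to deduce both assertions from the pushout characterization of $\Sigma$ together with the detection and pasting lemmas, working throughout with \emph{coherent} diagrams. I would first exhibit the cogroup structure on $\Sigma x$. The counit is forced on us: it is the unique map $\Sigma x \to 0$. The comultiplication $c\colon \Sigma x \to \Sigma x \sqcup \Sigma x$ — recall from \autoref{thm:coprod-pushout} that $\Sigma x \sqcup \Sigma x$ is a pushout over $0$ — should be produced as a restriction of a single coherent ``pinch'' diagram over a finite poset: one starts from $x$, extends by zero along an appropriate (co)sieve, and left Kan extends so that the relevant subsquares become cocartesian by \autoref{lem:detection}. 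Reading off two of these cocartesian subsquares identifies their pushout corners with two copies of $\Sigma x$ and an outer corner with their coproduct, and the induced comparison is $c$. Coassociativity and the counit laws are then verified by the same method, enlarging the indexing poset and invoking the pasting lemma \autoref{thm:pasting} to see that the two bracketings of $c$ agree; the coinversion $\chi$ is constructed analogously.

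The heart of the statement is the identification of $r\colon \Sigma x \toiso w \toiso \Sigma x$ — the composite of the isomorphism $\phi$ coming from the cocartesian square~\eqref{eq:cocart} with the isomorphism $\psi$ coming from its $\sigma$-transpose $\sigma^* X$ — as the coinversion. Since inverses in the represented groups $\D(\bbone)(\Sigma x, Y)$ are unique, and $\chi$ is characterized as the unique endomorphism with
\[ \nabla \circ (\mathrm{id}_{\Sigma x} \sqcup \chi) \circ c = \bigl(\Sigma x \to 0 \to \Sigma x\bigr), \]
where $\nabla\colon \Sigma x\sqcup\Sigma x\to\Sigma x$ is the fold map, it suffices to prove this same equation with $r$ in place of $\chi$. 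To do so I would assemble one master coherent diagram containing the coherent square $X$ and its transpose $\sigma^* X$ as cocartesian subsquares that share the source $x$ and the corner $w$, together with the data defining $c$ and $\nabla$. Its outer square is a pasting of cocartesian inner squares, hence cocartesian by \autoref{thm:pasting}; since that outer square has initial corner $x$ and terminal corner $0$, the pasted composite is forced to factor through $0$. Matching coherent diagrams by their nontrivial parts via \autoref{rmk:ebziso} and \autoref{rmk:cocart-cornerdet}, this factorization is exactly the displayed coinverse identity for $r$, so $r=\chi$.

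The main obstacle is coherence, and it is precisely what makes the $\sigma$-flip meaningful: the square~\eqref{eq:cocart} and its transpose have the \emph{same} underlying diagram yet carry genuinely different coherent structures, so the distinction between $\phi$ and $\psi$ — and hence the appearance of an inversion rather than the identity — is invisible to the underlying diagram and can only be extracted coherently. Consequently the real work is choosing the indexing poset and the exact sequence of extension-by-zero and left-Kan-extension steps so that (i) every intended subsquare is provably cocartesian, and (ii) the three data $\phi$, $\psi$, and $c$ all arise as restrictions of one coherent diagram, which then automatically makes the coinverse composite the structure map it must be. Verifying that the outer pasted square is cocartesian with the claimed corners, by repeated application of \autoref{lem:detection} and \autoref{thm:pasting}, is the key computational step; once the master diagram is in place, both cogroup assertions and the identity $r=\chi$ read off formally.
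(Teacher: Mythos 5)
Your reduction of the second assertion to the single identity $\nabla\circ(\mathrm{id}_{\Sigma x}\sqcup r)\circ c = 0$, where $r\colon \Sigma x \toiso w \toiso \Sigma x$ is the transposition composite, is sound: once a coassociative, counital comultiplication $c$ admitting \emph{some} coinverse exists, inverses in the represented groups $\D(\bbone)(\Sigma x, Y)$ are unique, so it suffices to verify that identity for $r$. The genuine gap is that the verification itself is asserted rather than carried out, and the one mechanism you do indicate does not do what you claim. Cocartesianness of the outer square of your ``master diagram'' is a property of a coherent square: it identifies its terminal vertex as a pushout. It cannot, by itself, yield an equation between two specified morphisms of $\D(\bbone)$ that are assembled from colimit-comparison isomorphisms ($\phi,\psi\colon \Sigma x\toiso w$), a separately constructed pinch map $c$, and the fold $\nabla$. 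Indeed, the observation that ``a square with initial corner $x$ and terminal corner $0$ forces the composite to factor through $0$'' is vacuous --- any two maps $x\to 0$ agree because $0$ is terminal --- whereas the identity you need concerns \emph{endomorphisms of $\Sigma x$}. To conclude $\nabla(\mathrm{id}\sqcup r)c = 0$ you must realize this particular composite as the underlying morphism of a string of edges in a coherent diagram in which it visibly factors through a zero object; that requires specifying the finite poset, the exact sequence of extensions by zero and left Kan extensions producing $c$, $\nabla$, $\phi$, and $\psi$ compatibly, and checking via \autoref{lem:detection} and \autoref{thm:pasting} that the subdiagrams you read these maps off from are cocartesian. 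That bookkeeping is precisely the content of the lemma (it is where the transposition, invisible on underlying diagrams, is converted into an inversion), and your proposal defers all of it --- as it does the coassociativity, counit, and existence-of-coinverse verifications that must precede any appeal to uniqueness of inverses.

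For comparison, the paper does not prove this lemma internally at all: it cites \cite[Prop.~4.12]{groth:ptstab}, which proves it for \emph{additive} derivators (sufficient for the paper, since by \autoref{thm:additive} the derivators where the lemma is used are additive; additivity pins down the cogroup structure as the canonical one and lets one compute the transposition composite against it), and \cite[\S VI.3]{heller:htpythys} for general pointed derivators, where the argument is a fully worked-out version of exactly the kind of coherent-diagram construction you sketch. So your outline points in the right direction, and is more ambitious than the paper in targeting arbitrary pointed derivators, but as written it is a plan for a proof rather than a proof: the decisive constructions and identifications are missing, and the shortcut offered in their place is not valid.
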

\begin{proof}
  In~\cite[Prop.~4.12]{groth:ptstab} this is proven under the additional assumption that the derivator is additive, which will always be the case in this paper (see \autoref{thm:additive}).
  A different proof which works more generally can be found in~\cite[\S VI.3]{heller:htpythys}.
\end{proof}

We generally write this cogroup structure additively, and thus denote this morphism by ``$-1$''.

\begin{rmk}
  This may seem strange, but it is not really a new sort of phenomenon.
  Already in ordinary category theory, a universal property is not merely a property of an object, but of that object equipped with extra data, and changing the data can give the same object the same universal property in more than one way.
  For instance, a cartesian product $A\times A$ comes with two projections $\pi_1,\pi_2\colon A\times A\toto A$ exhibiting it as a product of $A$ and $A$, whereas switching these two projections exhibits the same object as a product of $A$ and $A$ in a different way.
  In that case, the induced automorphism of $A\times A$ is the symmetry, $(a,b) \mapsto (b,a)$.
  In the case of suspensions, the ``universal property data'' consists of a cocartesian square~\eqref{eq:susp}, and transposing the square is analogous to switching the projections.
\end{rmk}

The suspension functor of $\D\op$ is called the \textbf{loop space} functor of \D and denoted~$\Omega$.
By definition, $\Omega x$ comes with a coherent diagram of shape $\Box$ in $\D\op$.
In~\D, this is a diagram of shape $\Box\op$, hence looks like
\begin{equation}
\vcenter{\xymatrix@-.5pc{
    x\ar@{<-}[r]\ar@{<-}[d] &
    0\ar@{<-}[d]\\
    0\ar@{<-}[r] &
    \Omega x.
  }}\label{eq:cartop}
\end{equation}
Restricting this along the isomorphism $\tau\colon\Box \toiso \Box\op$ which fixes $(0,1)$ and $(1,0)$ and exchanges $(0,0)$ with $(1,1)$, we obtain a cartesian square in \D of the form
\begin{equation}
\vcenter{\xymatrix@-.5pc{
    \Omega x\ar[r]\ar[d] &
    0\ar[d]\\
    0\ar[r] &
    x.
  }}\label{eq:cart}
\end{equation}

It may seem terribly pedantic to distinguish between diagrams of shape $\Box$ and $\Box\op$, but we find that it helps avoid confusion with minus signs.
In particular, if instead of the isomorphism $\tau$ we used the isomorphism $\tau\sigma$ (which ``rotates''~\eqref{eq:cartop} to make it look like~\eqref{eq:cart}), we would obtain a \emph{different} cartesian square of shape~\eqref{eq:cart} in \D.
The difference would, again, be the inversion map $-1\colon\Omega x \to \Omega x$.

\begin{lem}[{\cite[Prop.~3.17]{groth:ptstab}}]\label{lem:susploopadj}
  There is an adjunction $\Sigma\dashv\Omega$.
\end{lem}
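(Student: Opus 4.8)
The plan is to prove the adjunction by exhibiting a natural isomorphism of hom-sets $\D(\bbone)(\Sigma x, y) \cong \D(\bbone)(x, \Omega y)$, using the category $\D(\Box)$ as a bridge. First I would unwind the definitions. Writing $i_\ulcorner\colon \mathord\ulcorner\into\Box$, $i_\lrcorner\colon\mathord\lrcorner\into\Box$, and the corner inclusions $(0,0),(1,1)\colon\bbone\to\Box$, I set $Sx \coloneqq (i_\ulcorner)_!(0,0)_* x$ and $Ly \coloneqq (i_\lrcorner)_*(1,1)_! y$. By construction $Sx$ is the cocartesian square~\eqref{eq:susp} with $\Sigma x = (1,1)^* Sx$, and $Ly$ is the cartesian square~\eqref{eq:cart} with $\Omega y = (0,0)^* Ly$. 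Since $Sx$ lies in the essential image of $(i_\ulcorner)_!$ and $Ly$ in that of $(i_\lrcorner)_*$, the counit $(i_\ulcorner)_!(i_\ulcorner)^* Sx \to Sx$ and the unit $Ly \to (i_\lrcorner)_*(i_\lrcorner)^* Ly$ are isomorphisms.

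The core of the argument is then two dual computations of the single hom-set $\D(\Box)(Sx, Ly)$. For the first, using that $Ly$ is cartesian I would rewrite $\D(\Box)(Sx, Ly) \cong \D(\Box)(Sx, (i_\lrcorner)_*(i_\lrcorner)^* Ly)$ and apply the adjunction $(i_\lrcorner)^* \dashv (i_\lrcorner)_*$ to obtain $\D(\mathord\lrcorner)((i_\lrcorner)^* Sx, (i_\lrcorner)^* Ly)$. Both restricted diagrams vanish at $(0,1)$ and $(1,0)$, so by the cosieve half of \autoref{lem:extbyzero} they are extensions by zero, $(i_\lrcorner)^* Sx \cong (1,1)_!\Sigma x$ and $(i_\lrcorner)^* Ly \cong (1,1)_! y$; since $(1,1)_!$ is fully faithful this hom-set is $\cong \D(\bbone)(\Sigma x, y)$. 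For the second, I would use dually that $Sx$ is cocartesian to write $\D(\Box)(Sx, Ly) \cong \D(\mathord\ulcorner)((i_\ulcorner)^* Sx, (i_\ulcorner)^* Ly)$ via $(i_\ulcorner)_! \dashv (i_\ulcorner)^*$; the sieve half of \autoref{lem:extbyzero} then identifies $(i_\ulcorner)^* Sx \cong (0,0)_* x$ and $(i_\ulcorner)^* Ly \cong (0,0)_* \Omega y$, and full faithfulness of $(0,0)_*$ yields $\cong \D(\bbone)(x, \Omega y)$. Composing the two computations produces the desired isomorphism.

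I would emphasize that the pointed hypothesis is exactly what makes both halves of \autoref{lem:extbyzero} applicable at once: it is the coincidence of initial and terminal objects that lets the off-diagonal zeros of $Sx$ be read as a cosieve extension by zero and those of $Ly$ as a sieve extension by zero. Naturality in $x$ and $y$ is inherited from the naturality of the adjunction isomorphisms and of the unit/counit isomorphisms above, so once the hom-set bijection is in place no separate verification of triangle identities is required.

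Finally, a word on where the difficulty lies. The naive attempt --- to build $\Omega$ as the right adjoint of $\Sigma$ by reversing the composite $(1,1)^*(i_\ulcorner)_!(0,0)_*$ and taking adjoints factor by factor --- breaks down immediately, because $(0,0)_*$ is itself a right adjoint with no available further right adjoint (and dually $(1,1)_!$ has no further left adjoint). The purpose of routing through $\D(\Box)$ is precisely to sidestep this: instead of adjoining the extension-by-zero functors, one exploits the universal properties of cocartesian and cartesian squares --- the full faithfulness of $(i_\ulcorner)_!$ and $(i_\lrcorner)_*$ on their essential images --- to compute a single intermediate hom-set in two different ways. I expect the only genuine care needed to be bookkeeping: keeping the four corners and the two adjunction isomorphisms straight, and checking that each identification of a restricted square with an extension by zero is the canonical one, so that the resulting bijection is truly natural.
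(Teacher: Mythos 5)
Your proof is correct, and it is essentially the argument of the cited source (the paper itself gives no proof beyond the reference to \cite{groth:ptstab}): there one factors $\Sigma$ and $\Omega$ through the full subcategory of $\D(\Box)$ spanned by squares vanishing at $(0,1)$ and $(1,0)$, obtaining $\Sigma\dashv\Omega$ as a composite of two adjunctions, which are precisely your two computations of $\D(\Box)(Sx,Ly)$ --- one exhibiting $x\mapsto Sx$ as left adjoint to $(0,0)^*$ on that subcategory, the other exhibiting $y\mapsto Ly$ as right adjoint to $(1,1)^*$. Your use of both halves of \autoref{lem:extbyzero}, with pointedness making the off-diagonal zeros simultaneously initial and terminal, is exactly the crux there as well, and your closing remark is right that a natural hom-set bijection already constitutes an adjunction, so no triangle-identity check is needed.
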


The \textbf{cofiber functor} $\cof\colon \D(\bbtwo)\to\D(\bbtwo)$ in a pointed derivator is the composite
\[ \D(\bbtwo) \xto{(0,-)_*} \D(\ulcorner) \xto{(i_{\ulcorner})_!} \D(\Box) \xto{(-,1)^*} \D(\bbtwo). \]
Here $(0,-)\colon \bbtwo\to\ulcorner$ indicates the inclusion as the objects with first coordinate $0$, and similarly for $(-,1)\colon \bbtwo\to\Box$.
Since $(0,-)$ is a sieve, $(0,-)_*$ is an extension by zero; thus by stopping after the first two functors we have a cocartesian square
\begin{equation}\label{eq:cofiber}
  \vcenter{\xymatrix@-.5pc{
      x \ar[r]^f \ar[d] & y \ar[d]^{\cof(f)}\\
      0 \ar[r] & z.
    }}
\end{equation}
By Remarks~\ref{rmk:ebziso} and~\ref{rmk:cocart-cornerdet}, any two cocartesian squares~\eqref{eq:cofiber} with the same underlying object $(x\xto{f} y)$ of $\D(\bbtwo)$ are isomorphic.

\begin{rmk}
  In a \emph{strong} pointed derivator, every morphism in $\D(\bbone)$ underlies some object of $\D(\bbtwo)$.
  Thus, we can construct ``the'' cofiber of any morphism in $\D(\bbone)$ by first lifting it to an object of $\D(\bbtwo)$.
  By \autoref{rmk:der5}, the result is independent of the chosen lift, up to \emph{non-unique isomorphism}.
\end{rmk}

Dually, the \textbf{fiber functor} $\fib\colon \D(\bbtwo)\to\D(\bbtwo)$ is the cofiber functor of $\D\op$, which can be identified with the composite
\[ \D(\bbtwo) \xto{(-,1)_!} \D(\lrcorner) \xto{(i_{\lrcorner})_*} \D(\Box) \xto{(0,-)^*} \D(\bbtwo) \]
so that we have a cartesian square
\begin{equation}
  \xymatrix@R=1.5pc{w \ar[r]^{\fib(f)} \ar[d] & x \ar[d]^{f}\\
    0 \ar[r] & y.
  }
\end{equation}

\begin{lem}[{\cite[Prop.~3.20]{groth:ptstab}}]\label{lem:coffibadj}
  There is an adjunction $\cof\dashv\fib$.
\end{lem}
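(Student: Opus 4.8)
The plan is to prove the adjunction by exhibiting a bijection $\D(\bbtwo)(\cof X, Y) \cong \D(\bbtwo)(X, \fib Y)$ natural in $X,Y\in\D(\bbtwo)$; by the Yoneda lemma this gives $\cof\dashv\fib$, and since every step below is uniform in the derivator, running the same argument in each shifted derivator $\shift{\D}{A}$ upgrades it to an adjunction compatible with restriction. Unwinding the definitions, $\cof X = (-,1)^*(i_\ulcorner)_!(0,-)_* X$ and $\fib Y = (0,-)^*(i_\lrcorner)_*(-,1)_! Y$.

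First I would move both computations into $\D(\Box)$. On the cofiber side the adjunction $(-,1)^*\dashv(-,1)_*$ gives $\D(\bbtwo)(\cof X, Y)\cong \D(\Box)\big((i_\ulcorner)_!(0,-)_* X,\ (-,1)_* Y\big)$; dually, $(0,-)_!\dashv(0,-)^*$ gives $\D(\bbtwo)(X,\fib Y)\cong \D(\Box)\big((0,-)_! X,\ (i_\lrcorner)_*(-,1)_! Y\big)$. In each case the source is a cocartesian square (it lies in the image of $(i_\ulcorner)_!$, since the top-row inclusion $\bbtwo\to\Box$ factors through $\mathord{\ulcorner}$) and the target is a cartesian square (it lies in the image of $(i_\lrcorner)_*$, since the right-column inclusion factors through $\mathord{\lrcorner}$). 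The problem is thus reduced to comparing two Hom-sets in $\D(\Box)$, each going from a cocartesian square to a cartesian square --- but from \emph{different} such squares, so the two are not manifestly the same.

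To compare them I would restrict back to the corner categories. Because $(i_\ulcorner)_!\dashv(i_\ulcorner)^*$ with $(i_\ulcorner)_!$ fully faithful, the cofiber Hom-set equals $\D(\mathord{\ulcorner})\big((0,-)_* X,\ (i_\ulcorner)^*(-,1)_* Y\big)$; dually, since $(i_\lrcorner)^*\dashv(i_\lrcorner)_*$ with $(i_\lrcorner)_*$ fully faithful, the fiber Hom-set equals $\D(\mathord{\lrcorner})\big((i_\lrcorner)^*(0,-)_! X,\ (-,1)_! Y\big)$. Computing the four restricted diagrams explicitly from the pointwise formula (Der4) --- where the full-faithfulness of extension by zero does the bookkeeping --- both resulting Hom-sets acquire the shape ``maps from a right Kan extension of $X$ into a left Kan extension of $Y$'' on the respective corner.

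The final step, which I expect to be the main obstacle, is to identify these two corner Hom-sets. The difficulty is structural: the surviving Kan-extension functors sit on the \emph{wrong side} of each Hom (a right Kan extension in the source, a left Kan extension in the target), so they cannot be stripped off by any further adjunction, and pure adjoint-functor formalism runs out. I would finish by a direct identification, exhibiting both Hom-sets as the set of morphisms $X_1\to Y_0$ in $\D(\bbone)$ annihilated by the operation $\psi\mapsto g\psi f$, where $f\colon X_0\to X_1$ and $g\colon Y_0\to Y_1$ are the underlying arrows of $X$ and $Y$ --- equivalently, computing each via the sieve--cosieve (recollement) decomposition of $\mathord{\ulcorner}$, respectively $\mathord{\lrcorner}$, into a row and the complementary corner. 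The two descriptions then visibly coincide and are natural in $X$ and $Y$, completing the bijection.
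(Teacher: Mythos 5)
Your reductions are sound exactly up to the point where you say you expect the main obstacle: unwinding $\cof$ and $\fib$, passing to $\D(\Box)$ via $(-,1)^*\dashv(-,1)_*$ and $(0,-)_!\dashv(0,-)^*$, restricting to the corners via $(i_\ulcorner)_!\dashv(i_\ulcorner)^*$ and $(i_\lrcorner)^*\dashv(i_\lrcorner)_*$, and the diagnosis that the surviving Kan extensions sit on the wrong sides of the two Hom-sets are all correct. But the proposed final step is not a gap to be filled --- it is false. Identifying $\D(\mathord\ulcorner)\bigl((0,-)_*X,\,(i_\ulcorner)^*(-,1)_*Y\bigr)$ with $\{\psi\colon X_1\to Y_0 \mid g\psi f=0\}$ computes a Hom-set between \emph{coherent} diagrams from their underlying \emph{incoherent} data. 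That is valid in represented derivators (which is why the formula looks right), but it is precisely what fails in a general derivator, and precisely what the theory is designed to keep track of. Concretely, in the homotopy derivator of spectra let $X$ be the identity arrow of the sphere $S$ and $Y=(0\to K)$. Then $\cof X\cong(S\to 0)$, and a coherent map $(S\to 0)\to(0\to K)$ consists of the two (necessarily zero) components \emph{together with a homotopy filling the square}, i.e.\ a self-homotopy of the zero map $S\to K$; hence $\D(\bbtwo)(\cof X,Y)\cong\pi_1(K)$, which is nonzero e.g.\ for $K=\Sigma S$. Your formula gives the one-point set, since $Y_0=0$. The recollement decomposition you invoke cannot rescue this: for this $X$ it produces only a short exact sequence $0\to\mathrm{coker}(\pi_1Y_0\to\pi_1Y_1)\to\D(\bbtwo)(\cof X,Y)\to\ker(\pi_0Y_0\to\pi_0Y_1)\to 0$, so your set is merely the bottom quotient of a two-step filtration, not the Hom-set itself.

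The repair is to route both sides through one and the same middle term, rather than the two different ones you chose. Set $F=(i_\ulcorner)_!(0,-)_*$ and $G=(i_\lrcorner)_*(-,1)_!$, and compare everything with $\D(\Box)(FX,GY)$. The key point is that $FX$ and $GY$ \emph{both} vanish at $(1,0)$. By $(i_\lrcorner)^*\dashv(i_\lrcorner)_*$ we have $\D(\Box)(FX,GY)\cong\D(\mathord\lrcorner)\bigl((i_\lrcorner)^*FX,\,(-,1)_!Y\bigr)$; since $(i_\lrcorner)^*FX$ vanishes at $(1,0)$ and $(-,1)\colon\bbtwo\to\mathord\lrcorner$ is a cosieve, \autoref{lem:extbyzero} gives $(i_\lrcorner)^*FX\cong(-,1)_!\cof X$, and full faithfulness of $(-,1)_!$ then identifies the middle term with $\D(\bbtwo)(\cof X,Y)$. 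Dually, $(i_\ulcorner)_!\dashv(i_\ulcorner)^*$ gives $\D(\Box)(FX,GY)\cong\D(\mathord\ulcorner)\bigl((0,-)_*X,\,(i_\ulcorner)^*GY\bigr)$; the vanishing of $(i_\ulcorner)^*GY$ at $(1,0)$ and the sieve property of $(0,-)$ give $(i_\ulcorner)^*GY\cong(0,-)_*\fib Y$ by \autoref{lem:extbyzero}, and full faithfulness of $(0,-)_*$ identifies the middle term with $\D(\bbtwo)(X,\fib Y)$. All isomorphisms are natural in $X$ and $Y$, proving $\cof\dashv\fib$; this is in essence the argument of the cited reference. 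The idea your outline is missing is that the cofiber square of $X$ and the fiber square of $Y$ live in the same full subcategory of $\D(\Box)$ --- squares vanishing at $(1,0)$ --- and it is the extension-by-zero characterization on that subcategory, not any pointwise computation, that strips off the remaining Kan extensions.
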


In a pointed derivator \D, we define a \textbf{cofiber sequence} to be a coherent diagram of shape $\boxbar=\bbtwo\times\bbthree$ in which both squares are cocartesian and whose $(0,2)$- and $(1,0)$-entries are zero objects:
\begin{equation}
  \vcenter{\xymatrix@-.5pc{
      x\ar[r]^f\ar[d] &
      y\ar[r]\ar[d]^g &
      0\ar[d]\\
      0\ar[r] &
      z\ar[r]_h &
      w
    }}\label{eq:cofiberseq}
\end{equation}
Suitable combinations of Kan extensions give a functorial construction of cofiber sequences $\D(\bbtwo)\to\D(\boxbar)$. 
This functor induces an equivalence onto the full subsategory of $\D(\boxbar)$ spanned by the cofiber sequence. Thus, for derivators  a morphism is equivalent to 
 its cofiber sequence, and there are variants of this for iterated cofiber sequences, fiber sequences, and similar such constructions.

Recall that $\iota_{j k}$ denotes the functor $\Box\to\boxbar$ induced by the identity of $\bbtwo$  on the first factor and the functor $\bbtwo\to\bbthree$ on the second factor which sends $0$ to $j$ and $1$ to $k$.
Then a cofiber sequence is an $X\in\D(\boxbar)$ such that $X_{(0,2)}$ and $X_{(1,0)}$ are zero objects and $\iota_{01}^*X$ and $\iota_{12}^*X$ are cocartesian.
By \autoref{thm:pasting}, $\iota_{02}^*X$ is also cocartesian, and therefore induces an isomorphism $w\cong \Sigma x$.
(As always, $\sigma^*\iota_{02}^*X$ is also cocartesian, but would induce the opposite isomorphism $w\cong \Sigma x$.)
Of course, by restricting to the two cocartesian squares, we also obtain canonical isomorphisms $g\cong \cof(f)$ and $h\cong \cof(g)$.

As suggested in \cite[Remarque 2.1.62]{ayoub:six}, the identification of $w$ with $\Sigma x$ can also be made functorial.

\begin{lem}\label{thm:cof-cubed}
  The functor $\cof^3\colon \D(\bbtwo)\to\D(\bbtwo)$ is naturally isomorphic to the suspension functor $\Sigma$ of $\D^\bbtwo$.
\end{lem}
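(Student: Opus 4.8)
The two functors in play both send $f\in\D(\bbtwo)=\shift{\D}{\bbtwo}(\bbone)$ to an object of $\D(\bbtwo)$ (recall that since \D is pointed, so is $\shift{\D}{\bbtwo}$), and by definition $\Sigma_{\shift{\D}{\bbtwo}}f=(1,1)^*(i_\ulcorner)_!(0,0)_*f$ is the lower-right corner of the cocartesian square in $\shift{\D}{\bbtwo}$ obtained by extending $f$ by zero and taking a pushout. Since $(i_\ulcorner)_!$ is fully faithful, a cocartesian square is determined by its $\ulcorner$-restriction up to canonical isomorphism via the counit (cf.\ \autoref{rmk:cocart-cornerdet}). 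The plan is therefore to exhibit a single cocartesian square $Y\in\shift{\D}{\bbtwo}(\Box)$ whose restriction to $\ulcorner$ is (isomorphic to) $(0,0)_*f$ and whose lower-right corner is $\cof^3 f$. Then $Y\cong(i_\ulcorner)_!(0,0)_*f$, and applying $(1,1)^*$ gives $\cof^3 f\cong\Sigma_{\shift{\D}{\bbtwo}}f$; this isomorphism is natural in $f$ because it is induced by an adjunction counit together with the functorial construction of $Y$.

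To build $Y$, I first assemble the triple cofiber sequence of $f$ as a coherent diagram $X\in\D(P)$ on the eight-object poset $P=(\bbthree\times\bbthree)\setminus\{(2,0)\}$:
\[
  \vcenter{\xymatrix@-.5pc{
      x \ar[r]\ar[d] & y \ar[r]\ar[d] & 0 \ar[d]\\
      0 \ar[r] & z \ar[r]\ar[d] & \Sigma x \ar[d]\\
      & 0 \ar[r] & \Sigma y.
    }}
\]
Here rows $0$ and $1$ form the cofiber sequence~\eqref{eq:cofiberseq} of $f$ (so $z\cong\cof f$, the lower-right corner is $\Sigma x$, and the morphism $z\to\Sigma x$ is $\cof^2 f$), and the lower-right square is the cofiber square of $z\to\Sigma x$, obtained by a further extension by zero (\autoref{lem:extbyzero}) and pushout. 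Franke's lemma (\autoref{lem:detection}) makes all three squares cocartesian. By the pasting lemma (\autoref{thm:pasting}), the rectangle on columns $\{0,2\}$, rows $\{0,1\}$ and the rectangle on columns $\{1,2\}$, rows $\{0,2\}$ are also cocartesian; as their off-diagonal corners are zero, they exhibit $\Sigma x$ and $\Sigma y$ as the suspensions of $x$ and of $y$. Finally $\cof^3 f$ is the rightmost vertical morphism $\Sigma x\to\Sigma y$, i.e.\ the restriction of $X$ to the edge $(1,2)\to(2,2)$.

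Now define the functor $F\colon\bbtwo\times\Box\to P$ sending the face $\{0\}\times\Box$ isomorphically onto the first suspension rectangle (corners $x,0,0,\Sigma x$) and $\{1\}\times\Box$ onto the second (corners $y,0,0,\Sigma y$), compatibly in the $\bbtwo$-direction: explicitly $(s;0,0)\mapsto(0,s)$, $(s;1,1)\mapsto(1+s,2)$, and each off-diagonal corner to the appropriate zero of $P$. Set $Y=F^*X\in\shift{\D}{\bbtwo}(\Box)$. Its two $\bbtwo$-components are exactly the two suspension rectangles, which are cocartesian in \D, so by \autoref{lem:shiftedkan} the square $Y$ is cocartesian in $\shift{\D}{\bbtwo}$. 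Its $(0,0)$-corner is the morphism $x\to y=f$, its off-diagonal corners are the zero object $0\to 0$ of $\D(\bbtwo)$, and its $(1,1)$-corner is $\cof^3 f$. Thus $Y$ is the square required in the first paragraph, and the result follows.

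The main work is the construction of $X$: one must verify that iterating extension by zero and pushout places the zero objects in the correct positions and renders all three constituent squares cocartesian, while keeping track of the ``rotation'' by which the output $z\to\Sigma x$ of the second cofiber reappears as the (horizontal) input edge of the third. Once $X$ is in hand, the pullback along $F$ together with \autoref{lem:shiftedkan} upgrades the pointwise identification $\cof^3 f\simeq\Sigma_{\shift{\D}{\bbtwo}}f$ to the level of coherent cocartesian squares, and naturality is then automatic from the universal property of $(i_\ulcorner)_!$.
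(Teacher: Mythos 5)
Your proposal is correct and follows essentially the same route as the paper's own proof: both construct the triple cofiber sequence as a coherent diagram on the poset $\bbthree\times\bbthree$ minus $(2,0)$, make all squares and rectangles cocartesian via \autoref{lem:detection} and \autoref{thm:pasting}, restrict along a functor $\bbtwo\times\Box\to P$ (the paper's $q\colon C\to A$) to extract the cube whose faces are the two suspension squares, and invoke \autoref{lem:shiftedkan} to see that this cube is a cocartesian square in $\shift\D\bbtwo$ identifying $\cof^3 f$ with $\Sigma f$. Your extra opening paragraph merely spells out the standard uniqueness argument (\autoref{rmk:ebziso}, \autoref{rmk:cocart-cornerdet}) that the paper leaves implicit in its final sentence.
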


\begin{proof}
  Let $A$ be the full subcategory of $\bbthree\times\bbthree$ which omits $(2,0)$.
  Using a combination of extension by zero functors and left Kan extensions, we have a functor $\D(\bbtwo)\to\D(A)$ which sends $x\xto{f}y$ to a diagram of the following form
  \begin{equation}\label{eq:cof3}
    \vcenter{\xymatrix@-.7pc{
      x \ar[r]^f \ar[d] &
      y \ar[d]_g \ar[r] &
      0_2 \ar[d]\\
      0_1 \ar[r] &
      z \ar[r]^h \ar[d] &
      w \ar[d]^k\\
      & 0_3 \ar[r] &
      v.}}
  \end{equation}
  (Ignore the subscripts for now; all objects denoted $0_k$ are zero objects.)
  \autoref{lem:detection} implies that all squares and rectangles in this diagram are cocartesian.
  Thus we have a canonical identification of $g\in\D(\bbtwo)$ with $\cof(f)$, and similarly of $h$ and $k$ with $\cof^2(f)$ and $\cof^3(f)$.

  Now let $C = \bbtwo^3$ be the shape of a cube, and let $q\colon C\to A$ be the functor such that $q^*$ of~\eqref{eq:cof3} has the following form
  \[\xymatrix@-1pc{
    x \ar[rr]^f \ar[dr] \ar[dd] &&
    y \ar[dr] \ar'[d][dd] \\
    & 0_2 \ar[rr] \ar[dd] &&
    0_2 \ar[dd]\\
    0_1 \ar[dr] \ar'[r][rr] &&
    0_3 \ar[dr]\\
    & w \ar[rr]_k && v.
  }\]
  Here the subscripts match those in~\eqref{eq:cof3} to indicate the definition of $q$ precisely.
  This cube may be regarded as a coherent square in $\shift\D\bbtwo$ (with the $\bbtwo$-direction going left-to-right).
  Moreover, since its left and right faces are cocartesian in \D, applying \autoref{lem:shiftedkan} with $A= \ulcorner$, $B= \Box$ and $C=\bbtwo$ we see the cube is cocartesian in $\shift\D\bbtwo$.
  Thus, it naturally identifies $k\cong\cof^3(f)$ with $\Sigma(f)$.
\end{proof}

\begin{rmk}\label{rmk:rotating-signs}
  The identification of $k$ with $\Sigma(f)$ in the proof of \autoref{thm:cof-cubed} mandates that we identify $w$ and $v$ with $\Sigma x$ and $\Sigma y$ using the cocartesian squares
  \begin{equation}
    \vcenter{\xymatrix{
        x\ar[r]\ar[d] &
        0_2\ar[d]\\
        0_1\ar[r] &
        w
      }} \qquad\text{and}\qquad
    \vcenter{\xymatrix{
        y\ar[r]\ar[d] &
        0_2\ar[d]\\
        0_3\ar[r] &
        v
      }}
  \end{equation}
  respectively.
  Of course, if we were to instead use the transpose of \emph{one} of these squares, then $k$ would instead be identified with $-\Sigma f$.
  This is exactly what happens in the proof in~\cite[Theorem~4.16]{groth:ptstab} that distinguished triangles can be ``rotated'' (axiom (T2) of a triangulated category).
\end{rmk}

We define a \textbf{fiber sequence} in $\D$ to be a cofiber sequence in $\D\op$.
Thus, it is a diagram of shape $\boxbar\op$ in \D, which looks like
\[\vcenter{\xymatrix@-.5pc{
    z\ar@{<-}[r]\ar@{<-}[d] &
    y\ar@{<-}[r]\ar@{<-}[d] &
    0\ar@{<-}[d]\\
    0\ar@{<-}[r] &
    x\ar@{<-}[r] &
    w.
  }}
\]
By restricting along the ``rotation'' isomorphism $\rho\colon\boxbar \toiso \boxbar\op$, we can draw this as a diagram of shape $\boxbar$ in \D
\begin{equation}
\vcenter{\xymatrix@-.5pc{
    w\ar[r]\ar[d] &
    x\ar[r]\ar[d] &
    0\ar[d]\\
    0\ar[r] &
    y\ar[r] &
    z
  }}\label{eq:fiberseq}
\end{equation}
in which both squares are cartesian.
As before, it follows that the outer rectangle is also cartesian, and hence we can identify $w$ with the loop space object $\Omega z$.
Note, though, that the isomorphism $\Box\toiso \Box\op$ induced on the outer rectangle by $\rho$ is $\tau\sigma$, not $\tau$.
In fact, there is no isomorphism $\boxbar \toiso \boxbar\op$ which induces $\tau$ on the outer rectangles.

We now turn to stable derivators.
In contrast to pointedness, which at least has nontrivial examples in the representable case (even if suspensions and loops are generally not very interesting there), stability is entirely a homotopical notion: the only stable represented derivator is $y(\bbone)$.
For now, we give three versions of the definition; we will see in \S\ref{sec:characterization} that they are actually equivalent.

\begin{defn}
  Let \D be a pointed derivator.
  \begin{enumerate}
  \item \D is \textbf{stable} if a coherent square
    \begin{equation}\label{eq:stabsq}
      \xymatrix@-.5pc{x \ar[r] \ar[d] & y \ar[d] \\ z \ar[r] & w }
    \end{equation}
    is cartesian if and only if it is cocartesian.
  \item \D is \textbf{cofiber-stable} if this is true under the additional assumption that $z$ is a zero object.
  \item \D is \textbf{$\Sigma$-stable} if this is true under the additional assumption that $y$ and $z$ are both zero objects.
  \end{enumerate}
\end{defn}

A square which is both cartesian and cocartesian is called~\textbf{bicartesian}.
In particular, in a $\Sigma$-stable derivator, the bicartesian square~\eqref{eq:susp} induces an isomorphism $x \cong \Omega\Sigma x$, and similarly~\eqref{eq:cart} induces an isomorphism $\Sigma \Omega x \cong x$.
Thus, the adjunction $\Sigma\dashv \Omega$ is an equivalence.
Similarly, $\cof\dashv\fib$ is an equivalence in a cofiber-stable derivator.

\begin{eg}
  A \emph{stable model category} is, by definition, a pointed model category whose homotopy derivator is $\Sigma$-stable.
  However, by~\cite[Remark~7.1.12]{hovey:modelcats} such a derivator is in fact stable.
  (In \autoref{thm:stable} we will generalize this fact to all derivators.)
  Thus, any stable model category gives rise to a stable derivator.
\end{eg}

\begin{eg}
  As defined in~\cite[1.1.1.9]{lurie:ha}, a \emph{stable $(\infty,1)$-category} is a pointed $(\infty,1)$-category with fibers and cofibers and in which fiber sequences agree with cofiber sequences.
  Thus, a complete and cocomplete $(\infty,1)$-category is stable if and only if its homotopy derivator is cofiber-stable.
\end{eg}

One of the basic facts about stable derivators is the following.

\begin{lem}\label{thm:additive}
  Any $\Sigma$-stable derivator is \emph{additive} in the sense that finite products and coproducts coincide naturally.
\end{lem}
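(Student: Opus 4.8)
The statement to prove is that the canonical comparison \( x\sqcup y\to x\times y \) is an isomorphism, naturally in \( x \) and \( y \) (the nullary case, that the initial and terminal objects coincide, is just pointedness, and both finite products and coproducts are available by (Der1) and (Der3)). The plan is to realize \( \D(\bbone) \) as a category with a zero object, finite products, and an enrichment in groups with bilinear composition; for any such category the classical argument shows that finite products are automatically biproducts, and ``additivity in the sense that products and coproducts coincide'' follows at once.

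The group structure on hom-sets will come entirely from the suspension. First I would recall that in \emph{any} pointed derivator \( \Sigma z \) is a cogroup object, naturally in \( z \) --- this is the lemma recalled above, whose general proof in \cite[\S VI.3]{heller:htpythys} does not presuppose additivity, so there is no circularity. Equivalently, \( \Sigma \) lifts to a functor into the category of cogroup objects of \( \D(\bbone) \), so that every morphism of the form \( \Sigma\phi \) is a cogroup homomorphism. Next I would use \( \Sigma \)-stability: by \autoref{lem:susploopadj} the adjunction \( \Sigma\dashv\Omega \) is now an equivalence, so every object \( u \) is isomorphic to \( \Sigma\Omega u \) and every morphism is carried to a suspension \( \Sigma(\Omega u\to\Omega u') \). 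Transporting the cogroup structures along these isomorphisms equips every object with a cogroup structure and makes \emph{every} morphism a cogroup homomorphism. I would then read off that each \( [u,w] \) is a group and that composition is bilinear: post-composition is additive because the comultiplication lives on the source, and pre-composition is additive precisely because every map is now a cogroup homomorphism.

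With bilinear composition in hand, the last step is the standard biproduct argument. Writing \( P=x\times y \) with projections \( p_1,p_2 \), I would define injections \( u_1,u_2 \) from the universal property of the product by \( p_iu_j=\delta_{ij} \), compute \( p_j(u_1p_1+u_2p_2)=p_j \) using bilinearity, and conclude \( u_1p_1+u_2p_2=1_P \) from joint monicity of the projections; a second application of bilinearity shows that \( (P,u_1,u_2) \) satisfies the coproduct universal property, so \( x\sqcup y\to x\times y \) is an isomorphism, plainly natural in \( x \) and \( y \). I expect the main obstacle to be the middle step: transferring the \emph{natural} cogroup structure along the equivalence \( \Sigma\dashv\Omega \) carefully enough to conclude that composition is bilinear (and not merely that each hom-set is a group), while keeping the argument independent of additivity so as not to invoke the cogroup lemma in its additive form. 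Once bilinearity is secured, an Eckmann--Hilton argument comparing the cogroup-induced and biproduct-induced operations upgrades the group structures to abelian ones, so that \( \D(\bbone) \) is additive in the full sense.
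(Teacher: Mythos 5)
Your proof is correct, and it is essentially the dual of the paper's own argument: where the paper transfers the natural \emph{monoid} structure on loop objects $\Omega x$ (available in any pointed derivator) to all objects using $\Sigma$-stability, you transfer the natural \emph{cogroup} structure on suspensions $\Sigma x$. Both rest on the same pillars --- Heller's non-additive construction of the (co)group structure together with its naturality, and the equivalence $\Sigma\dashv\Omega$ that $\Sigma$-stability provides --- followed by classical categorical algebra; and your ``middle step'' worry is unfounded, since transporting the structure along the counit isomorphisms $\Sigma\Omega u \toiso u$ is exactly the same move as the paper's assertion that every object is of the form $\Omega^2 X$ and every morphism of the form $\Omega^2 f$. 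The genuine differences are in the endgame. The paper invokes the folklore criterion in its commutative form, and therefore must pass to \emph{double} loop objects and run Eckmann--Hilton \emph{before} the main argument; you observe instead that the biproduct argument ($p_i u_j = \delta_{ij}$, then $u_1p_1 + u_2p_2 = 1_P$ by joint monicity, then the coproduct property) needs only bilinearity of composition, not commutativity, so a single application of $\Sigma$-stability suffices and Eckmann--Hilton becomes an optional coda establishing abelianness of the hom-groups. What the paper's route buys is a one-line appeal to a standard fact; what yours buys is a more economical use of stability (one suspension rather than two loops) and a self-contained proof of the biproduct property. You are also right, and it matches the care the paper takes on the dual side, that the cogroup structure and its naturality must be taken from Heller's proof in \cite{heller:htpythys} rather than from \cite[Prop.~4.12]{groth:ptstab}, whose proof assumes additivity and would make the argument circular.
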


  The proof of this fact in~\cite[Prop.~4.7]{groth:ptstab} uses ``full'' stability.  In the next section we will need the fact that additivity requires only $\Sigma$-stability  
  so  we sketch an alternative proof.
\begin{proof}[Sketch of proof]
  It is well-known that a category with products and coproducts is additive when every object has a commutative monoid structure and every morphism is a monoid map.
  Now by~\cite[Lemma~4.11]{groth:ptstab}, any object of the form $\Omega X$ is a monoid; the statement in~\cite{groth:ptstab} assumes stability, but the proof uses only pointedness.
  The construction is natural, so every morphism of the form $\Omega f$ is a monoid map.
  The usual Eckmann--Hilton argument implies that any object of the form $\Omega^2 X$ is a \emph{commutative} monoid.
  However, $\Sigma$-stability implies that every object is of the form $\Omega^2 X$ and every morphism is of the form $\Omega^2 f$.
\end{proof}

In a cofiber-stable derivator, if we have a cofiber sequence
\begin{equation}
  \vcenter{\xymatrix@-.5pc{
      x\ar[r]^f\ar[d] &
      y\ar[r]\ar[d]_g &
      0\ar[d]\\
      0\ar[r] &
      z\ar[r]^h &
      w,
    }}\label{eq:cofiberseq2}
\end{equation}
then we say that the induced string of composable arrows in $\D(\bbone)$
\begin{equation}
  \DT x f y g z {h'} {\Sigma x}\label{eq:dt}
\end{equation}
is a \textbf{distinguished triangle}.
Here $h'$ is the composite $z\xto{h} w \toiso \Sigma x$, the isomorphism being induced by the outer rectangle of~\eqref{eq:cofiberseq2}.
Note that a distinguished triangle is an \emph{incoherent} diagram, i.e.\ an object of $\D(\bbone)^\bbthree$ rather than $\D(\bbthree)$.
As usual, we also extend the term \emph{distinguished triangle} to any such incoherent diagram which is \emph{isomorphic} to one obtained in this way.

\begin{thm}[{\cite[Theorem~4.16]{groth:ptstab}}]\label{thm:triang}
  If \D is a strong, stable derivator, then the suspension functor and distinguished triangles defined above make $\D(\bbone)$ into a \emph{triangulated category} in the sense of Verdier.
\end{thm}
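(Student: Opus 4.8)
The plan is to verify Verdier's axioms (TR1)--(TR4) for the shift functor $\Sigma$ together with the class of distinguished triangles defined in~\eqref{eq:dt}. Two structural preliminaries come first. Since a stable derivator is in particular $\Sigma$-stable, \autoref{thm:additive} guarantees that $\D(\bbone)$ is additive, so that it is meaningful to ask for a triangulation at all. Moreover, stability makes the bicartesian squares~\eqref{eq:susp} and~\eqref{eq:cart} exhibit isomorphisms $\Omega\Sigma\cong\mathrm{id}\cong\Sigma\Omega$, so the adjunction $\Sigma\dashv\Omega$ of \autoref{lem:susploopadj} is an equivalence and $\Sigma$ is an autoequivalence, as a shift functor must be.

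For (TR1), closure under isomorphism is part of the definition of a distinguished triangle, and the triangle $x\xto{1}x\to 0\to\Sigma x$ on an identity is distinguished because $\cof(1_x)$ is a zero object (the relevant square is bicartesian by \autoref{thm:trivial-pushout}). The one real point is that every morphism sits in a triangle: given $f\colon x\to y$ in $\D(\bbone)$, strongness (Der5) lets me lift $f$ to an object of $\D(\bbtwo)$, and applying the functorial cofiber-sequence construction $\D(\bbtwo)\to\D(\boxbar)$ produces a cofiber sequence~\eqref{eq:cofiberseq2} and hence the distinguished triangle $x\xto{f}y\to z\to\Sigma x$.

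For (TR2) I would use that iterating the cofiber three times recovers the suspension: \autoref{thm:cof-cubed} gives $\cof^3\cong\Sigma$ on $\D(\bbtwo)$, so prolonging a cofiber sequence by one further cofiber exhibits the rotated triangle $y\to z\to\Sigma x\xto{-\Sigma f}\Sigma y$. The only subtle feature is the sign on the last map, which is precisely the discrepancy between the two identifications $w\cong\Sigma x$ coming from a cocartesian square and its transpose, as analyzed in \autoref{rmk:rotating-signs}; the converse half of the ``iff'' then follows by rotating repeatedly and using that $\Sigma$ is invertible. For (TR3), given distinguished triangles on $f$ and $f'$ and a commuting square $(a,b)$ with $bf=f'a$, I would first invoke fullness in (Der5) to lift $(a,b)$ to a morphism between the chosen lifts $\phi,\phi'\in\D(\bbtwo)$ of $f$ and $f'$, then use essential surjectivity of (Der5) for the shifted derivator $\shift\D\bbtwo$ to realize this as a coherent object of $\D(\Box)$. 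Feeding it into the functorial cofiber-sequence construction for $\shift\D\bbtwo$ yields a morphism of cofiber sequences, whose third component is the desired map $z\to z'$ together with the commuting squares. As always with (TR3), this lift is not unique, which is exactly why the filling cannot be made functorial.

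The step I expect to be the main obstacle is the octahedral axiom (TR4). Given composable maps $f\colon x\to y$ and $g\colon y\to z$, the plan is to assemble a single large coherent diagram---refining the $3\times 3$ pattern already used in the proof of \autoref{thm:cof-cubed}, indexed by a suitable full subcategory of $\bbthree\times\bbthree$---in which $f$, $g$, $gf$ and all of their cofibers appear and every constituent square is built by extension by zero and left Kan extension. The detection lemma \autoref{lem:detection} then makes each small square cocartesian, and the pasting lemma \autoref{thm:pasting} identifies the cofibers of $f$, $g$, and $gf$ and glues them into the octahedron, the connecting maps being read off from the outer rectangles. The real labor lies in choosing this indexing category, checking that every square in the diagram is cocartesian, and matching the resulting maps (and their signs) against the octahedral data; once the functorial cofiber machinery and strongness are in place, the remaining axioms are comparatively formal.
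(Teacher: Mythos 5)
You should first be aware that this paper does not prove \autoref{thm:triang} at all: it is imported verbatim from~\cite[Theorem~4.16]{groth:ptstab}, and the only traces of its proof visible here are the remark following the theorem (explaining why strongness is needed) and \autoref{rmk:rotating-signs} (recording the sign in the rotation axiom). Your outline is essentially a reconstruction of the strategy of that cited proof, and for (TR1)--(TR3) it is sound: additivity from \autoref{thm:additive}, invertibility of $\Sigma$ from stability and \autoref{lem:susploopadj}, (TR1) via (Der5) and the functorial cofiber-sequence construction (with $\cof(1_x)\cong 0$ by \autoref{thm:trivial-pushout}), (TR2) via $\cof^3\cong\Sigma$ from \autoref{thm:cof-cubed} with the sign explained by \autoref{rmk:rotating-signs}, and (TR3) by lifting the incoherent square to an object of $\D(\Box)$ using (Der5) for the shifted derivator $\shift\D\bbtwo$.

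The genuine gap is (TR4), which you defer rather than prove, and your plan skips the one step that is not ``comparatively formal''. Before any Kan-extension diagram can be assembled, you need a \emph{coherent} lift of the composable pair $(f,g)$ to an object of $\D(\bbthree)$, and (Der5) does not provide this directly: the axiom only concerns $\D(A\times\bbtwo)\to\D(A)^\bbtwo$, i.e.\ single morphisms, never composable pairs (under Heller's stronger version of (Der5), mentioned in \autoref{rmk:der5}, the lift would be immediate since $\bbthree$ is a finite free category, but this paper's axiom is the weaker one). The repair uses exactly the tool you deployed for (TR3): lift $f$ and $gf$ to objects of $\D(\bbtwo)$, use fullness of (Der5) to lift the commuting square $(1_x,g)\colon f\to gf$ to a morphism in $\D(\bbtwo)$, lift that morphism to an object of $\D(\Box)$ by (Der5) with $A=\bbtwo$, and restrict along the functor $\bbthree\to\Box$ picking out the top and right edges; but this must be said, since without it your large coherent diagram cannot even be started. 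Beyond that, the actual verification (choice of the subcategory of $\bbthree\times\bbthree$, cocartesianness of all squares via \autoref{lem:detection} and \autoref{thm:pasting}, identification of the maps and signs) is precisely where the content of the octahedral axiom lives, and you would also need to note that producing \emph{one} octahedron suffices for Verdier's axiom with arbitrarily given triangles, because any two distinguished triangles on the same morphism are isomorphic by (TR1), (TR3), and the five lemma. As it stands, the proposal is an accurate outline of the known proof with its hardest axiom left open.
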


\begin{rmk}
  We need the assumption that \D is strong because the triangulation axioms for $\D(\bbone)$ refer only to morphisms of $\D(\bbone)$ (having no other option), whereas to prove the axioms we need to lift such morphisms to objects of $\D(\bbtwo)$.
  For instance, to extend a morphism $f\colon x\to y$ to a distinguished triangle, we need $f$ to be an object of $\D(\bbtwo)$ so as to be able to extend it to a cofiber sequence. Thus, the strongness is only needed to relate properties of derivators to structure on its values but not for the theory of derivators itself.
\end{rmk}

It is crucial that in passing from cofiber sequences to distinguished triangles, we use the isomorphism $w\cong \Sigma x$ obtained from the outer rectangle of~\eqref{eq:cofiberseq2} and not its $\sigma$-transpose.
In particular, this implies that although \emph{fiber sequences} and \emph{cofiber sequences} essentially coincide in a stable derivator (modulo $\rho^*$), they do not induce the same notion of ``distinguished triangle''.
This is expressed by the following lemma, whose analogue for homotopy categories of stable model categories is well-known (see e.g.~\cite[Theorem~7.1.11]{hovey:modelcats}).

\begin{lem}\label{lem:fibercofiber}
  If \D is a stable derivator, then the distinguished triangles in $\D\op(\bbone)$ are the negatives of those in $\D(\bbone)$.
\end{lem}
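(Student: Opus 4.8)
The plan is to exhibit, in the stable case, a single coherent diagram that simultaneously witnesses a distinguished triangle in $\D(\bbone)$ and one in $\D\op(\bbone)$, and then to compare the two resulting connecting maps, showing that they differ by exactly one sign.

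First I would start from a cofiber sequence~\eqref{eq:cofiberseq2} in \D, producing the distinguished triangle~\eqref{eq:dt} $x\xto{f}y\xto{g}z\xto{h'}\Sigma x$ in $\D(\bbone)$, where $h'$ incorporates the identification $w\cong\Sigma x$ read off from the outer rectangle via $\tau$. Since \D is stable, both squares of~\eqref{eq:cofiberseq2} are bicartesian, so the very same coherent diagram is also a fiber sequence; equivalently, it corresponds under $\rho^*$ to a cofiber sequence in $\D\op$. As the distinguished triangles of $\D\op(\bbone)$ are by definition those arising from cofiber sequences in $\D\op$ (with suspension $\Omega$), this gives me a canonical candidate triangle in $\D\op(\bbone)=\D(\bbone)\op$ to compare against~\eqref{eq:dt}.

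Next I would compute that $\D\op$-triangle explicitly. Reading off the relevant path and using $\D\op(\bbone)=\D(\bbone)\op$, its underlying morphisms are again $f$, $g$, $h$ (now merely reinterpreted in the opposite category), so the only place a discrepancy can enter is the identification of the suspension corner. The crucial input, already recorded before the lemma, is that the rotation $\rho\colon\boxbar\toiso\boxbar\op$ induces on the outer rectangle the isomorphism $\tau\sigma$ rather than $\tau$. Thus the connecting map of the $\D\op$-triangle is built from the $\tau\sigma$-identification of~\eqref{eq:cart}/\eqref{eq:cartop}, whereas~\eqref{eq:dt} was built from the $\tau$-identification. Invoking the fact established in the discussion of loop spaces (and of exactly the same nature as \autoref{rmk:rotating-signs}), that replacing $\tau$ by $\tau\sigma$ on such a square differs precisely by the inversion map $-1$, this single sign is the entire difference between the two connecting maps, and hence the triangle produced in $\D\op(\bbone)$ is the negative of~\eqref{eq:dt}.

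The hard part is purely the bookkeeping of signs. I must identify which coherent diagram in $\D\op(\boxbar)$ corresponds, under $\rho^*$ together with the ambient op, to a given cofiber sequence of \D, and then verify that neither the op nor the rotation introduces any further sign beyond the intended one, that the $\tau$-versus-$\tau\sigma$ discrepancy is exactly the inversion $-1$, and that it lands on the connecting map rather than somewhere harmless. Isolating this one sign --- and confirming there are no others --- is the whole content of the lemma, so the delicacy lies entirely in doing the comparison carefully rather than in any substantial new construction.
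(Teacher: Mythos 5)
Your proposal is correct and is essentially the paper's own proof: stability makes the cofiber sequence~\eqref{eq:cofiberseq2} simultaneously a fiber sequence, restricting along $\rho$ turns it into a cofiber sequence in $\D\op$, the underlying maps $f,g,h$ agree, and the single sign arises precisely because $\rho$ restricts to $\tau\sigma$ rather than $\tau$ on the outer rectangle, which changes the identification of the $\Omega$-corner by the inversion $-1$. One harmless imprecision: the identification $w\cong\Sigma x$ in~\eqref{eq:dt} comes directly from the outer \emph{cocartesian} rectangle ($\tau$ plays no role on the suspension side), and accordingly the paper exhibits the sign on the first map (showing $f'=-f$) rather than on the connecting map --- an immaterial difference, since negating any single morphism of each triangle yields the same (negative) triangulation.
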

Recall that the negative of a triangulation is obtained by negating an odd number of the morphisms in each given distinguished triangle.
\begin{proof}
  Suppose $X\in\D(\boxbar)$ is a cofiber sequence in \D that looks like~\eqref{eq:cofiberseq2}.
  Then since cocartesian squares in \D are also cartesian, $\rho^* X\in\D(\boxbar\op) = \D\op(\boxbar)\op$ is a fiber sequence in \D, i.e.\ a cofiber sequence in $\D\op$.
  It therefore induces a distinguished triangle in $\D\op$, which interpreted in $\D$ looks like
  \begin{equation}
    \xymatrix{w \ar@{<-}[r]^{h} & z \ar@{<-}[r]^{g} & y \ar@{<-}[r]^{f'} & \Omega w.}\label{eq:codistinguished}
  \end{equation}
  Since $\Sigma$ is inverse to $\Omega$, we can turn this around and write it as
  \[ \DT{\Omega w}{f'}{y}{g}{z}{h}{\Sigma\Omega w} \]
  and then use (either) isomorphism $x\cong \Omega w$ to write it as
  \[ \DT{x}{f'}{y}{g}{z}{h}{\Sigma x.} \]
  However, because in~\eqref{eq:codistinguished} $x$ is identified with $\Omega w$ (the suspension of $w$ in $\D\op$) \emph{not} via the outer cartesian rectangle of~\eqref{eq:cofiberseq2}, but its $\sigma$-transpose (since $\rho$ restricts to $\tau\sigma$ on the outer rectangle of $\boxbar$), we have $f'=-f$ and not $f$.
\end{proof}

\section{Mayer-Vietoris sequences}
\label{sec:mv}

The following is our main result.
It was previously known to be true for homotopy categories of stable model categories (see~\cite[Lemma~5.7]{may:traces}). 

\begin{thm}\label{prop:pushout-dt}
  In a cofiber-stable derivator \D, if we have a cocartesian square
  \begin{equation}
    \vcenter{\xymatrix{
        x\ar[r]^f\ar[d]_g &
        y\ar[d]^j\\
        z\ar[r]_k &
        w
      }}
  \end{equation}
  then there is a cocartesian square
  \begin{equation}
    \vcenter{\xymatrix@C=3pc{
        x\ar[r]^-{(f,-g)}\ar[d] &
        y\oplus z\ar[d]^{[j,k]}\\
        0\ar[r] &
        w
      }}
  \end{equation}
  and hence a distinguished triangle
  \[ \DTl{x}{(f,-g)}{y \oplus z}{[j,k]}{w}{}{\Sigma x.}\]
\end{thm}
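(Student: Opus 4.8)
The plan is to produce the asserted cocartesian square with a zero object in its lower-left corner; once we have it, the distinguished triangle is immediate, since a cocartesian square of that shape is precisely the left-hand half of a cofiber sequence, and cofiber sequences are exactly what define distinguished triangles. Before constructing anything I would record two preliminary observations. First, a cofiber-stable derivator is in particular $\Sigma$-stable (its defining class of squares is larger), hence additive by \autoref{thm:additive}; this is what gives meaning to the biproduct $y\oplus z$, to the map $(f,-g)$ into it and $[j,k]$ out of it, and it lets us treat $y\oplus z$ both as a product and, via \autoref{thm:coprod-pushout}, as the pushout of $y\leftarrow 0\to z$. Second, since we are assuming only cofiber-stability (full stability is proved later), I would use the identity ``cartesian iff cocartesian'' only for squares whose lower-left corner is a zero object; the target square is of exactly this form, so it will be harmless to verify whichever of the two properties is more convenient.

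The geometric idea is to glue the given cocartesian square to the coproduct square along the canonical map out of $0$. Concretely, I would assemble a coherent cube, i.e.\ a diagram of shape $\Box\times\bbtwo$, whose two $\Box$-faces are the coproduct square $(0\to y,\ z\to y\oplus z)$ and the given square $(x\to y,\ z\to w)$, connected in the extra $\bbtwo$-direction by the unique map $0\to x$, by identities on $y$ and $z$, and therefore by the induced map $[j,k]\colon y\oplus z\to w$ on the two pushout corners. Reading the extra $\bbtwo$ as a shift and applying \autoref{lem:shiftedkan} along $\mathord{\ulcorner}\into\Box$, this cube is cocartesian in $\shift{\D}{\bbtwo}$ precisely because both of its $\Box$-faces are cocartesian in \D. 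The work is then to reorganize this data so that the square $x\to y\oplus z,\ 0\to w$ appears as a genuine restriction; rather than extracting a single slice of the cube (the naive diagonal slice is \emph{not} cocartesian), I would set this up as an instance of \autoref{lem:detection} (or \autoref{lem:detectionplus}), building the whole configuration as a single left Kan extension over a poset that contains both the original span with sink $w$ and the map $[j,k]$, so that the target square is cocartesian by construction and its corner is identified with the pushout $w$. The pasting lemma \autoref{thm:pasting} and additivity are then used to recognize the intermediate objects and to confirm that the map out of $y\oplus z$ is exactly $[j,k]$.

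I expect the main obstacle to be the signs. Any diagram built purely from Kan extensions along poset maps produces the canonical map $(f,g)\colon x\to y\oplus z$, whereas the distinguished triangle demands $(f,-g)$; and the sign is genuinely forced, because the composite $[j,k]\circ(f,-g)=jf-kg$ must vanish for this to be a cofiber sequence, and that vanishing is exactly the commutativity $jf=kg$ of the original square. Matching the construction to the correct sign—so that we land on the distinguished triangle itself and not on its negative—will require transposing one of the coherent squares, precisely the mechanism analyzed in \autoref{rmk:rotating-signs} and exploited in \autoref{lem:fibercofiber}: a $\sigma$-transposition replaces a suspension-level identification by its negative and so inserts the $-1$ on the $z$-summand, converting $(f,g)$ into $(f,-g)$ while leaving $[j,k]$ and the connecting map $w\toiso\Sigma x$ with their intended signs. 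Tracking this single transposition carefully is the delicate step; everything else is routine assembly from the detection, pasting, and additivity results already in hand, together with the functorial passage from cofiber squares to distinguished triangles.
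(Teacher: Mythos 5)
Your preliminaries are correct (cofiber-stability implies $\Sigma$-stability and hence additivity by \autoref{thm:additive}, and $y\oplus z$ is the pushout of $y\leftarrow 0\to z$ by \autoref{thm:coprod-pushout}), and your cube gluing the coproduct square to the given square along $0\to x$, $1_y$, $1_z$, $[j,k]$ is exactly the paper's starting point. But the central step of your plan---realizing the square with corners $x$, $y\oplus z$, $0$, $w$ ``as a genuine restriction'' of a single left Kan extension over a poset, so that it is ``cocartesian by construction''---cannot be carried out, and the obstruction is fatal rather than technical. The underlying diagram of a coherent diagram over a poset commutes, so if the poset contains paths $x\le y\le e$ and $x\le z\le e$, where the vertex $e$ carries $y\oplus z$ with its coproduct inclusions (as it must, for the map out of $e$ to be identified with $[j,k]$), then commutativity forces $\iota_y\circ f=\iota_z\circ g$ in $\D(\bbone)$; composing with the projection $y\oplus z\to y$ gives $f=0$, a contradiction. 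This is precisely why, in the paper's cube of shape $\Box\times\bbtwo$, the $x$-vertex and the $(y\oplus z)$-vertex are incomparable. You notice the problem yourself (``Kan extensions along poset maps produce $(f,g)$''), but your proposed repair---a $\sigma$-transposition inserting $-1$ on the $z$-summand---has nothing to act on: the sign mechanism of \autoref{rmk:rotating-signs} changes the identification of the pushout corner of a square whose two \emph{off-diagonal} entries are zero objects with a suspension, and your construction contains no such suspension squares. Transposing the target square merely exchanges the positions of $y\oplus z$ and $0$, and transposing the coproduct square produces the symmetry $y\oplus z\cong z\oplus y$, not a sign; the morphism $-1$ simply does not occur in any diagram built from the span by Kan extensions alone.

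The sign can only be created at the suspension level, and that is how the paper proceeds: it takes the cofiber sequence of your cube regarded as a morphism in $\shift\D\Box$, producing a $\Box\times\boxbar$-shaped ``double hypercube'' which realizes the \emph{rotated} triangle $y\oplus z\xto{[j,k]}w\to\Sigma x\to\Sigma y\oplus\Sigma z$. There the choice between the square \eqref{eq:sigma-x} identifying $\Sigma x$ and its transpose genuinely produces a sign, and the bulk of the paper's proof (the extended hypercube and the coherent zigzag of cubes) goes into identifying the connecting map as $\Sigma(-f,g)=(-\Sigma f,\Sigma g)$, one projection at a time, using additivity. Only then does one rotate \emph{backwards}---legitimate in a cofiber-stable derivator, where fiber and cofiber sequences coincide---which desuspends and negates $\Sigma(-f,g)$ to give $(f,-g)$, and whose first (cartesian, hence by cofiber-stability cocartesian) square is the asserted one. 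So the desired square is an output of the backwards rotation through $\Sigma$, not of any detection lemma applied to a poset-shaped Kan extension; your proposal is missing both this mechanism and the substantial identification argument it requires.
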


\begin{proof}
Suppose given a cocartesian square
\begin{equation}
  \vcenter{\xymatrix{
      x\ar[r]^f\ar[d]_g &
      y\ar[d]^j\\
      z\ar[r]_k &
      w.
    }}\label{eq:pushoutdt}
\end{equation}
We will construct a cofiber sequence
\begin{equation}
  \vcenter{\xymatrix@C=3pc{
      y\oplus z\ar[r]^-{[j,k]}\ar[d] &
      w \ar[r] \ar[d] &
      0 \ar[d]\\
      0\ar[r] &
      \Sigma x\ar[r]_-{\Sigma (-f,g)} &
      \Sigma y\oplus \Sigma z
    }}
\end{equation}
and hence a distinguished triangle
\begin{equation}
  \DTl{y \oplus z}{[j,k]}{w}{}{\Sigma x}{\Sigma(-f,g)}{\Sigma y \oplus \Sigma z}.\label{eq:pdtsuspd}
\end{equation}
Rotating backwards (which we can do in any cofiber-stable derivator, where fiber sequences coincide with cofiber sequences) will produce the desired result.
(Recall that rotating a triangle negates the suspended morphism, so that $\Sigma(-f,g)$ becomes $(f,-g)$.)

We begin by restricting from~\eqref{eq:pushoutdt} to obtain a diagram as on the left below.
\[\xymatrix@-1pc{
  &&
  y  \ar[dd] \\
  & z  \ar[dd] &&
  \\
  x \ar[dr] \ar'[r][rr] &&
  y \ar[dr]\\
  & z \ar[rr] && w
} \qquad\xymatrix@-1pc{
  0 \ar[rr] \ar[dr] \ar[dd] &&
  y \ar[dr] \ar'[d][dd] \\
  & z \ar[rr] \ar[dd] &&
  y\oplus z \ar[dd]\\
  x \ar[dr] \ar'[r][rr] &&
  y \ar[dr]\\
  & z \ar[rr] && w.
}\]
Then by left Kan extension we obtain a cube as on the right, whose upper and lower squares are cocartesian.
Cocartesianness of the upper square, along with the identification of $0$ and $y\oplus z$, follow as in the proof of \autoref{thm:coprod-pushout}.

Regarding this right-hand cube as an object of $\shift\D\Box(\bbtwo)$, with its domain and codomain being the top square and bottom square respectively, we can construct its cofiber sequence.
The result is a diagram in \D of the ``double hypercube'' shape $\Box\times\boxbar$ shown in \autoref{fig:double-hypercube}.
As before, the subscripts are merely to distinguish different occurrences of the same (or isomorphic) objects.
\begin{figure}
  \centering
  \begin{tikzpicture}[->,xscale=1.3]
    \node (o-a) at (0.6,7.2) {$0$};
    \node (x1) at (3.4,7.2) {$x_1$};
    \node (y1) at (1,5.8) {$y_1$};
    \node (y2) at (3.8,5.8) {$y_2$};
    \node (z1) at (2,5) {$z_1$};
    \node (z2) at (4.8,5) {$z_2$};
    \node (yz) at (2.4,3.8) {$y\oplus z$};
    \node (w) at (5.2,3.8) {$w$};
    \node (o1) at (2,3) {$0_1$};
    \node (o5) at (4.8,3) {$0_5$};
    \node (o2) at (2.4,1.8) {$0_2$};
    \node (sx) at (5.2,1.8) {$\Sigma x$};
    \node (o-b) at (0.6,1.2) {$0$};
    \node (x2) at (3.4,1.2) {$x_2$};
    \node (o3) at (1,0) {$0_3$};
    \node (o4) at (3.8,0) {$0_4$};
    \node (o-c) at (6.2,7.2) {$0$};
    \node (o6) at (6.6,5.8) {$0_6$};
    \node (o7) at (7.6,5) {$0_7$};
    \node (o8) at (8.3,3.8) {$0_8$};
    \node (sy) at (6.6,0) {$\Sigma y$};
    \node (o9) at (6.2,1.2) {$0_9$};
    \node (sz) at (7.6,3) {$\Sigma z$};
    \node (sysz) at (8.3,1.8) {$\Sigma y \oplus \Sigma z$};
    \draw (o-c) -- (o6);
    \draw (x1) -- (o-c);
    \draw (o-c) -- (o7);
    \draw (o6) -- (o8);
    \draw (o7) -- (o8);
    \draw (o9) -- (sy);
    \draw (x2) -- (o9);
    \draw (o4) -- (sy);
    \draw (o-c) -- (o9);
    \draw (z2) -- (o7);
    \draw (o7) -- (sz);
    \draw (sy) -- (sysz);
    \draw (sz) -- (sysz);
    \draw (o8) -- (sysz);
    \draw (o9) -- (sz);
    \draw (o-a) -- (x1); \draw (x1) -- (y2);
    \draw (o-a) -- (y1);
    \draw (z1) -- (yz); \draw (z2) -- (w);
    \draw (o1) -- (o5);
    \draw (o1) -- (o2); \draw (o5) -- (sx);
    \draw (o-b) -- (x2); \draw (o3) -- (o4);
    \draw (o-b) -- (o3); \draw (x2) -- (o4);
    \draw (o-a) -- (z1); \draw (x1) -- (z2); 
    \draw (o-b) -- (o1); \draw (x2) -- (o5); 
    \draw (o-a) -- (o-b); \draw (x1) -- (x2);
    \draw (z1) -- (o1); \draw (z2) -- (o5);
    \draw[white,line width=5pt,-] (sx) -- (sysz);    \draw (sx) -- (sysz);
    \draw[white,line width=5pt,-] (o5) -- (sz);    \draw (o5) -- (sz);
    \draw[white,line width=5pt,-] (w) -- (o8);    \draw (w) -- (o8);
    \draw[white,line width=5pt,-] (z2) -- (o7); \draw (z2) -- (o7);
    \draw[white,line width=5pt,-] (o6) -- (sy);     \draw (o6) -- (sy);
    \draw[white,line width=5pt,-] (o6) -- (o8); \draw (o6) -- (o8);
    \draw[white,line width=5pt,-] (y2) -- (o6); \draw (y2) -- (o6);
    \draw[white,line width=5pt,-] (o2) -- (sx); \draw (o2) -- (sx);
    \draw[white,line width=5pt,-] (o1) -- (o5); \draw (o1) -- (o5);
    \draw[white,line width=5pt,-] (z1) -- (z2); \draw (z1) -- (z2);
    \draw[white,line width=5pt,-] (yz) -- (w); \draw (yz) -- (w);
    \draw[white,line width=5pt,-] (y2) -- (o4); \draw (y2) -- (o4);
    \draw[white,line width=5pt,-] (y1) -- (y2);     \draw (y1) -- (y2);  
    \draw[white,line width=5pt,-] (y1) -- (yz);    \draw (y1) -- (yz); 
    \draw[white,line width=5pt,-] (y2) -- (w);    \draw (y2) -- (w); 
    \draw[white,line width=5pt,-] (o4) -- (sx);   \draw (o4) -- (sx);
    \draw[white,line width=5pt,-] (o3) -- (o2);   \draw (o3) -- (o2);
    \draw[white,line width=5pt,-] (y1) -- (o3);    \draw (y1) -- (o3); 
    \draw[white,line width=5pt,-] (yz) -- (o2);   \draw (yz) -- (o2);
    \draw[white,line width=5pt,-] (w) -- (sx);   \draw (w) -- (sx);
  \end{tikzpicture}
  \caption{The double hypercube}
  \label{fig:double-hypercube}
\end{figure}
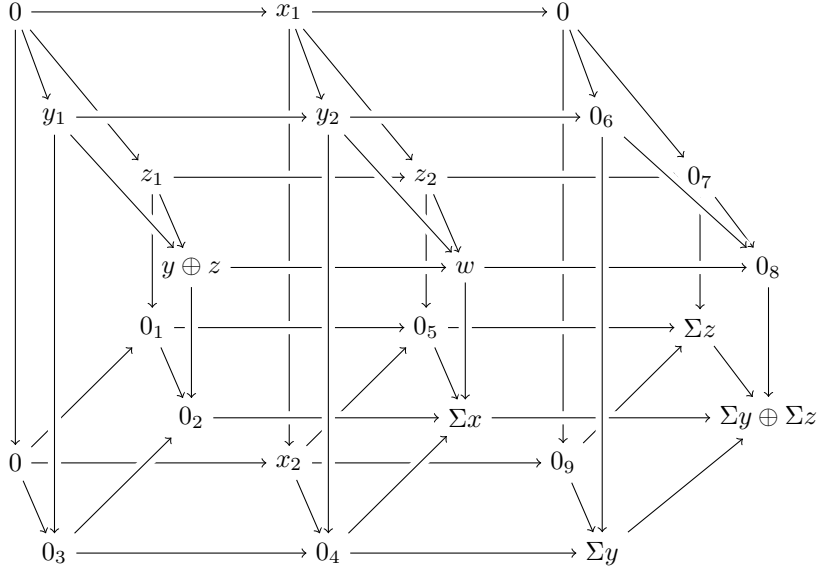
Since left Kan extensions in $\shift\D\Box$ are pointwise by \autoref{lem:shiftedkan}, all the squares in \autoref{fig:double-hypercube} whose sides are parallel to the axes are cocartesian.
This allows us to identify the objects labeled $0_4$, $0_5$, and $0_9$ as zero objects, and the objects labeled $\Sigma y$ and $\Sigma z$ as the suspensions indicated, by using the squares
\begin{equation}
  \vcenter{\xymatrix{
      y_1\ar[r]\ar[d] &
      0_6\ar[d]\\
      0_3\ar[r] &
      \Sigma y
      }}
    \qquad\text{and}\qquad \vcenter{\xymatrix{
      z_1\ar[r]\ar[d] &
      0_7\ar[d]\\
      0_1\ar[r] &
      \Sigma z.
      }}
\end{equation}

Next, since the diagram of categories
\begin{equation}
  \vcenter{\xymatrix{
      \mathord\ulcorner\times\mathord\ulcorner\ar[r]\ar[d] &
      \mathord\ulcorner\times\Box\ar[d]\\
      \Box\times\mathord\ulcorner\ar[r] &
      \Box\times\Box
      }}
\end{equation}
commutes, the pushout in $\shift\D\Box$ of a diagram of cocartesian squares is cocartesian.
Since the constant zero square is also cocartesian, all the squares occurring as objects in the cofiber sequence in $\shift\D\Box$ are cocartesian.
In particular, the squares in \autoref{fig:double-hypercube} labeled as
\begin{equation}\label{eq:sigma-x}
  \vcenter{\xymatrix{
      x_2\ar[r]\ar[d] &
      0_4\ar[d]\\
      0_5\ar[r] &
      \Sigma x
      }}
    \qquad\text{and}\qquad
  \vcenter{\xymatrix{
      0_9\ar[r]\ar[d] &
      \Sigma y\ar[d]\\
      \Sigma z\ar[r] &
      \Sigma y \oplus \Sigma z
      }}
\end{equation}
are cocartesian, allowing us to identify their lower-right corners as indicated.
Note that the latter is consistent with our identifications of $\Sigma y$ and $\Sigma z$ and the isomorphism $\Sigma(y\oplus z) \cong \Sigma y\oplus \Sigma z$.
By contrast, there is no canonical reason to choose the former over its transpose; our choice determines the appearance of $(-f,g)$ in the cofiber sequence rather than $(f,-g)$.

Inside \autoref{fig:double-hypercube} we see our desired cofiber sequence
\begin{equation}
  \vcenter{\xymatrix@C=3pc{
      y\oplus z\ar[r]\ar[d] &
      w \ar[r] \ar[d] &
      0_8 \ar[d]\\
      0_2\ar[r] &
      \Sigma x\ar[r] &
      \Sigma y\oplus \Sigma z.
    }}
\end{equation}
It is obvious that the map $y\oplus z \to w$ in this cofiber sequence is ${[j,k]}$; thus it remains only to identify the map $\Sigma x \to \Sigma y \oplus \Sigma z$ with $\Sigma(-f,g) = (-\Sigma f, \Sigma g)$.
Since \D is additive by \autoref{thm:additive}, it will suffice to show that the composites of this map with the two projections $\Sigma y \oplus \Sigma z \to \Sigma y$ and $\Sigma y \oplus \Sigma z \to \Sigma z$ are $-\Sigma f$ and $\Sigma g$, respectively.

We will show that the composite with the second projection $\Sigma y \oplus \Sigma z \to \Sigma z$ is $\Sigma g$, and then indicate how the other argument differs.
For this, we will extend \autoref{fig:double-hypercube} to a larger coherent diagram which includes the second projection: this diagram is shown in \autoref{fig:extended-hypercube} (we have neglected to draw the left half of the diagram in \autoref{fig:extended-hypercube} for brevity).
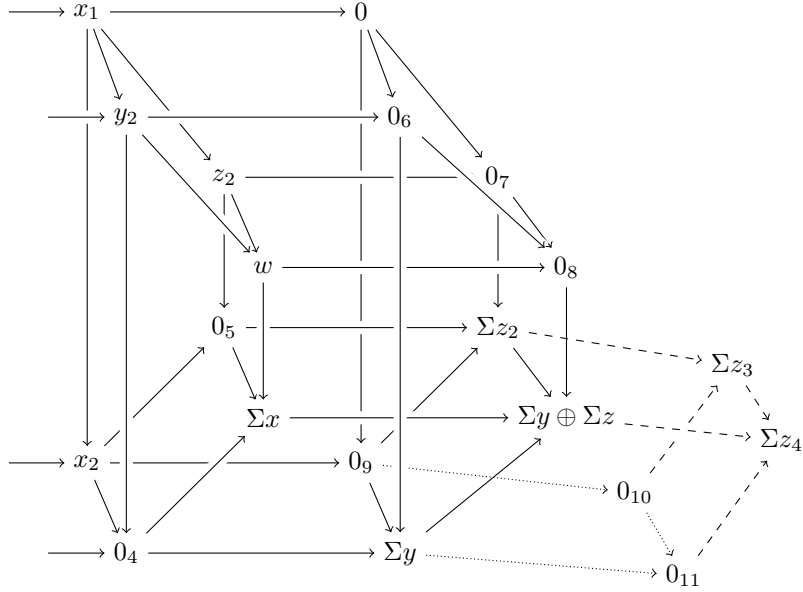
\begin{figure}
  \centering
  \begin{tikzpicture}[->,xscale=1.3]
    \node (x1) at (3.4,7.2) {$x_1$};
    \node (y2) at (3.8,5.8) {$y_2$};
    \node (z2) at (4.8,5) {$z_2$};
    \node (w) at (5.2,3.8) {$w$};
    \node (o5) at (4.8,3) {$0_5$};
    \node (sx) at (5.2,1.8) {$\Sigma x$};
    \node (x2) at (3.4,1.2) {$x_2$};
    \node (o4) at (3.8,0) {$0_4$};
    \node (o-c) at (6.2,7.2) {$0$};
    \node (o6) at (6.6,5.8) {$0_6$};
    \node (o7) at (7.6,5) {$0_7$};
    \node (o8) at (8.3,3.8) {$0_8$};
    \node (sy) at (6.6,0) {$\Sigma y$};
    \node (o9) at (6.2,1.2) {$0_9$};
    \node (sz) at (7.6,3) {$\Sigma z_2$};
    \node (sysz) at (8.3,1.8) {$\Sigma y \oplus \Sigma z$};
    \node (o10) at (9,0.8) {$0_{10}$};
    \node (o11) at (9.5,-0.3) {$0_{11}$};
    \node (sz3) at (10,2.5) {$\Sigma z_3$};
    \node (sz4) at (10.5,1.5) {$\Sigma z_4$};
    \draw (o-c) -- (o6);
    \draw (x1) -- (o-c);
    \draw (o-c) -- (o7);
    \draw (o6) -- (o8);
    \draw (o7) -- (o8);
    \draw (o9) -- (sy);
    \draw (x2) -- (o9);
    \draw (o4) -- (sy);
    \draw (o-c) -- (o9);
    \draw (z2) -- (o7);
    \draw (o7) -- (sz);
    \draw (sy) -- (sysz);
    \draw (sz) -- (sysz);
    \draw (o8) -- (sysz);
    \draw (o9) -- (sz);
    \draw (x1) -- (y2);
    \draw (z2) -- (w);
    \draw (o5) -- (sx);
    \draw (x2) -- (o4);
    \draw (x1) -- (z2); 
    \draw (x2) -- (o5); 
    \draw (x1) -- (x2);
    \draw (z2) -- (o5);
    \begin{scope}[densely dotted]
      \draw (o9) -- (o10);
      \draw (sy) -- (o11);
      \draw (o10) -- (o11);
      % \draw (o4) -- (o10);
    \end{scope}
    \begin{scope}[dashed]
      \draw (sz) -- (sz3);
      \draw (sz3) -- (sz4);
      \draw (o10) -- (sz3);
      \draw (o11) -- (sz4);
    \end{scope}
    \draw[white,line width=5pt,-] (sysz) -- (sz4); \draw[dashed] (sysz) -- (sz4);
    \draw[white,line width=5pt,-] (sx) -- (sysz);    \draw (sx) -- (sysz);
    \draw[white,line width=5pt,-] (o5) -- (sz);    \draw (o5) -- (sz);
    \draw[white,line width=5pt,-] (w) -- (o8);    \draw (w) -- (o8);
    \draw[white,line width=5pt,-] (z2) -- (o7); \draw (z2) -- (o7);
    \draw[white,line width=5pt,-] (o6) -- (sy);     \draw (o6) -- (sy);
    \draw[white,line width=5pt,-] (o6) -- (o8); \draw (o6) -- (o8);
    \draw[white,line width=5pt,-] (y2) -- (o6); \draw (y2) -- (o6);
    \draw[white,line width=5pt,-] (y2) -- (o4); \draw (y2) -- (o4);
    \draw[white,line width=5pt,-] (y2) -- (w);    \draw (y2) -- (w); 
    \draw[white,line width=5pt,-] (o4) -- (sx);   \draw (o4) -- (sx);
    \draw[white,line width=5pt,-] (w) -- (sx);   \draw (w) -- (sx);
    \draw[<-] (x1) -- +(-.8,0);
    \draw[white,line width=5pt,-] (y2) -- +(-.8,0);
    \draw[<-] (y2) -- +(-.8,0);
    \draw[<-] (x2) -- +(-.8,0);
    \draw[<-] (o4) -- +(-.8,0);
  \end{tikzpicture}
  \caption{The extended double hypercube}
  \label{fig:extended-hypercube}
\end{figure}
To obtain \autoref{fig:extended-hypercube} from \autoref{fig:double-hypercube}, first we extend by zero to add the zero objects $0_{10}$ and $0_{11}$ and the dotted arrows.
Then we left Kan extend to add the other two objects and the dashed arrows.

Now by \autoref{lem:detection}, the squares
\begin{equation}
  \vcenter{\xymatrix{
      0_9\ar[r]\ar[d] &
      0_{10}\ar[d]\\
      \Sigma z_2\ar[r] &
      \Sigma z_3
      }}
    \qquad\text{and}\qquad
  \vcenter{\xymatrix{
      \Sigma y\ar[r]\ar[d] &
      \Sigma y\oplus \Sigma z\ar[d]\\
      0_{11}\ar[r] &
      \Sigma z_4
      }}
\end{equation}
appearing in \autoref{fig:extended-hypercube} are cocartesian.
The first allows us to identify the object $\Sigma z_3$ as isomorphic to $\Sigma z_2$.
From the second, we deduced by \autoref{thm:pasting} that the square
\begin{equation}
  \vcenter{\xymatrix{
      0_9\ar[r]\ar[d] &
      \Sigma z_2\ar[d]\\
      0_{11}\ar[r] &
      \Sigma z_4
      }}
\end{equation}
is also cocartesian, allowing us to also identify $\Sigma z_4$ as isomorphic to $\Sigma z_2$.
Now the commutativity of the squares
\begin{equation}
  \vcenter{\xymatrix{
      \Sigma y\ar[r]\ar[d] &
      \Sigma y\oplus \Sigma z\ar[d]\\
      0_{11}\ar[r] &
      \Sigma z_4
      }}
    \qquad\text{and}\qquad
  \vcenter{\xymatrix{
      \Sigma z_2\ar[r]\ar[d] &
      \Sigma y\oplus \Sigma z\ar[d]\\
      \Sigma z_3\ar[r] &
      \Sigma z_4
      }}
\end{equation}
implies that the composites $\Sigma y \to \Sigma y \oplus \Sigma z \to \Sigma z_4$ and $\Sigma z_2 \to \Sigma y \oplus \Sigma z \to \Sigma z_4$ are zero and the identity, respectively.
Therefore, the map $\Sigma y \oplus \Sigma z \to \Sigma z_4$ is in fact the projection out of $\Sigma y \oplus \Sigma z$ regarded as a product.
(It is also possible to construct \autoref{fig:extended-hypercube} using a cofiber sequence in $\shift\D{\Box\times\bbtwo}$.)

We now identify the composite $\Sigma x \to \Sigma y\oplus \Sigma z \to \Sigma z_4$ appearing in \autoref{fig:extended-hypercube} with $\Sigma g$.
By definition of the functor $\Sigma$, the map $\Sigma g$ is uniquely determined by occurring as the $(1,1)$-component of a morphism in $\D(\Box)$ from
\begin{equation}
  \vcenter{\xymatrix{
      x_2\ar[r]\ar[d] &
      0_4\ar[d]\\
      0_5\ar[r] &
      \Sigma x
    }}
  \qquad\text{to}\qquad
  \vcenter{\xymatrix{
      z_1 \ar[r]\ar[d] &
      0_7\ar[d]\\
      0_1\ar[r] &
      \Sigma z_2
      }}
\end{equation}
whose $(0,0)$-component is $g\colon x\to z$.
An obvious way to obtain such a morphism would be if we could find a coherent cube
\begin{equation}
  \vcenter{\xymatrix@C=.7pc@R=1pc{
    & x_1 \ar[dl] \ar[rrr] \ar'[d][dd]
    &&& 0_4 \ar[dl] \ar[dd] \\
    0_5 \ar[dd] \ar[rrr]
    &&& \Sigma x \ar[dd]\\
    & z_1 \ar[dl] \ar'[rr][rrr]
    &&& 0_7 \ar[dl] \\
    0_1 \ar[rrr]
    &&& \Sigma z_2.
  }}
\end{equation}
Unfortunately, there is no map from $0_4$ to $0_7$ in \autoref{fig:extended-hypercube}, so we cannot obtain such a cube by restriction.
However, we can instead obtain a ``coherent zigzag'' of cubes, as shown in \autoref{fig:sigma-g}.
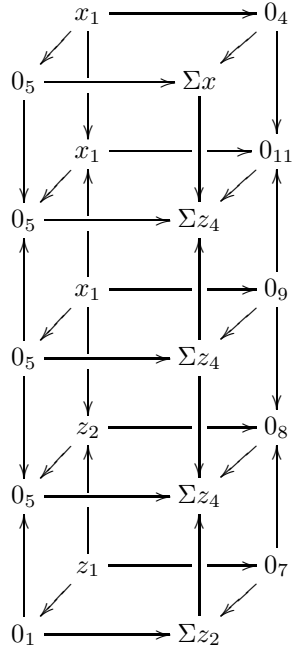
\begin{figure}
  \centering
  \begin{equation}\label{eq:Sigma-g}
  \vcenter{\xymatrix@C=.7pc@R=1pc{
    & x_1 \ar[dl] \ar[rrr] \ar'[d][dd]
    &&& 0_4 \ar[dl] \ar[dd] \\
    0_5 \ar[dd] \ar[rrr]
    &&& \Sigma x \ar[dd]\\
    & x_1 \ar[dl] \ar'[rr][rrr] \ar@{<-}'[d][dd]
    &&& 0_{11} \ar[dl] \ar@{<-}[dd] \\
    0_5 \ar@{<-}[dd] \ar[rrr]
    &&& \Sigma z_4 \ar@{<-}[dd]\\
    & x_1 \ar[dl] \ar'[rr][rrr] \ar'[d][dd]
    &&& 0_{9} \ar[dl] \ar[dd] \\
    0_5 \ar[dd] \ar[rrr]
    &&& \Sigma z_4 \ar[dd]\\
    & z_2 \ar[dl] \ar'[rr][rrr] \ar@{<-}'[d][dd]
    &&& 0_8 \ar[dl] \ar@{<-}[dd] \\
    0_5 \ar@{<-}[dd] \ar[rrr]
    &&& \Sigma z_4 \ar@{<-}[dd]\\
    & z_1 \ar[dl] \ar'[rr][rrr]
    &&& 0_7 \ar[dl] \\
    0_1 \ar[rrr]
    &&& \Sigma z_2
  }}
\end{equation}
\caption{Identification of $\Sigma g$}
\label{fig:sigma-g}
\end{figure}
This is an object of $\D(\Box\times F)$, where $F$ is the diagram shape
\[(\cdot \to \cdot \leftarrow \cdot \to \cdot \leftarrow \cdot).\]

We then apply the partial underlying diagram functor to \autoref{fig:sigma-g}, obtaining an incoherent $F$-shaped diagram in $\D(\Box)$.
However, since all the upwards-pointing arrows in \autoref{fig:sigma-g} are isomorphisms, by (Der2) so are the corresponding morphisms in $\D(\Box)$.
Thus, we can compose with their inverses to obtain a composite morphism
\[
\left(\vcenter{\xymatrix@R=1pc@C=.5pc{x_1 \ar[r] \ar[d] & 0_4 \ar[d] \\ 0_5 \ar[r] & \Sigma x}}\right)
\to
\left(\vcenter{\xymatrix@R=1pc@C=.5pc{x_1 \ar[r] \ar[d] & 0_{11} \ar[d] \\ 0_5 \ar[r] & \Sigma z_4}}\right)
\xleftarrow{\cong}
\left(\vcenter{\xymatrix@R=1pc@C=.5pc{x_1 \ar[r] \ar[d] & 0_6 \ar[d] \\ 0_5 \ar[r] & \Sigma z_4}}\right)
\to
\left(\vcenter{\xymatrix@R=1pc@C=.5pc{z_2 \ar[r] \ar[d] & 0_8 \ar[d] \\ 0_5 \ar[r] & \Sigma z_4}}\right)
\xleftarrow{\cong}
\left(\vcenter{\xymatrix@R=1pc@C=.5pc{z_1 \ar[r] \ar[d] & 0_7 \ar[d] \\ 0_1 \ar[r] & \Sigma z_2}}\right)
\]
in $\D(\Box)$.
Since the domain and codomain of this morphism are cocartesian squares, and its $(0,0)$-component is $g$, its $(1,1)$-component must be $\Sigma g$.

A symmetrical argument implies we can identify the map $\Sigma x \to \Sigma y$ with $\Sigma f$.
However, in this case the domain of the corresponding morphism in $\D(\Box)$ will be
\[\vcenter{\xymatrix{
    x_2\ar[r]\ar[d] &
    0_5\ar[d]\\
    0_4\ar[r] &
    \Sigma x
  }}
\]
which is transposed relative to our above choice of~\eqref{eq:sigma-x} to identify $\Sigma x$.
Thus, when we make the identifications consistently, we obtain $-\Sigma f$ in the second case.
\end{proof}

\begin{rmk}
  Inspecting the proof of \autoref{thm:triang} in~\cite[Theorem~4.16]{groth:ptstab}, we see that \autoref{prop:pushout-dt} and \autoref{lem:detection} imply that the triangulation of a stable derivator is always \emph{strong} in the sense of~\cite[Definition~3.8]{may:traces}.
  (This use of ``strong'' is unrelated to the notion of a derivator being strong.)
\end{rmk}

\section{On the definition of stable derivators}
\label{sec:characterization}

Finally, we use \autoref{prop:pushout-dt} to show that all three notions of stability for a derivator are equivalent.

\begin{thm}\label{thm:stable}
  For a pointed derivator \D, the following are equivalent.
  \begin{enumerate}
  \item \D is stable.\label{item:ss1}
  \item \D is cofiber-stable.\label{item:ss2}
  \item The adjunction $\cof\dashv\fib$ is an equivalence.\label{item:ss3}
  \item \D is $\Sigma$-stable.\label{item:ss4}
  \item The adjunction $\Sigma\dashv\Omega$ is an equivalence.\label{item:ss5}
  \end{enumerate}
\end{thm}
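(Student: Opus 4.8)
The plan is to prove the five statements equivalent via a web of implications organized around two ``local'' equivalences and one genuinely global step. The implications \ref{item:ss1}$\Rightarrow$\ref{item:ss2}$\Rightarrow$\ref{item:ss4} are immediate, since each successive notion only demands agreement of cartesian and cocartesian squares on a smaller class of squares. The equivalences \ref{item:ss2}$\Leftrightarrow$\ref{item:ss3} and \ref{item:ss4}$\Leftrightarrow$\ref{item:ss5} are essentially formal. Their forward directions are exactly the observations recorded after the definition of stability: in a cofiber-stable derivator the cofiber square~\eqref{eq:cofiber} and its fiber analogue become bicartesian, making $\cof\dashv\fib$ (\autoref{lem:coffibadj}) an equivalence, and likewise~\eqref{eq:susp} and~\eqref{eq:cart} make $\Sigma\dashv\Omega$ (\autoref{lem:susploopadj}) an equivalence. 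For the converses one notes that a square with $z$ (respectively with $y$ and $z$) a zero object is cocartesian precisely when its canonical comparison to the cofiber (respectively suspension) is invertible, and cartesian precisely when its canonical comparison from the fiber (respectively loop object) is; when the relevant adjunction is an equivalence these two conditions coincide. The only subtlety is to check that these comparison maps really are the unit and counit, which is the bookkeeping stressed throughout \S\ref{sec:pointedstable}.

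This leaves two substantive jobs: bridging the two clusters with \ref{item:ss4}$\Rightarrow$\ref{item:ss2}, and closing the cycle with \ref{item:ss2}$\Rightarrow$\ref{item:ss1}. For the bridge I would use \autoref{thm:cof-cubed}, which identifies $\cof^3$ with the suspension $\Sigma$ of the shifted derivator $\shift\D\bbtwo$. First, $\Sigma$-stability passes to shifts: left and right Kan extensions in $\shift\D\bbtwo$ are computed pointwise in the shift variable by \autoref{lem:shiftedkan} and its evident dual, so cocartesianness, cartesianness, and the zero conditions are all detected objectwise there. Hence if \D is $\Sigma$-stable then so is $\shift\D\bbtwo$, and by the already-established \ref{item:ss4}$\Leftrightarrow$\ref{item:ss5} (applied to $\shift\D\bbtwo$) the suspension $\Sigma$ of $\shift\D\bbtwo$ is an equivalence. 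By \autoref{thm:cof-cubed} this says precisely that $\cof^3$ is an equivalence of $\D(\bbtwo)$. Now a cube root of an equivalence is an equivalence: if $L^3$ is an equivalence with quasi-inverse $G$, then $GL^2$ is a left quasi-inverse and $L^2G$ a right quasi-inverse of $L$, which forces $L$ to be an equivalence. Applying this to $L=\cof$ shows $\cof$ is an equivalence, so its right adjoint $\fib$ is a quasi-inverse and $\cof\dashv\fib$ is an adjoint equivalence; by \ref{item:ss3}$\Rightarrow$\ref{item:ss2} this is cofiber-stability.

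The heart of the argument is \ref{item:ss2}$\Rightarrow$\ref{item:ss1}, where I would invoke the Mayer--Vietoris theorem \autoref{prop:pushout-dt}. Given a cocartesian square with corners $x,y,z,w$ and maps $f,g,j,k$, \autoref{prop:pushout-dt} produces a cocartesian square with lower-left corner a zero object, top map $(f,-g)$, and right map $[j,k]$. Since \D is cofiber-stable and this square has a zero corner, it is also cartesian, exhibiting $x$ as the fiber of $[j,k]\colon y\oplus z\to w$. Using additivity (\autoref{thm:additive}, available since \ref{item:ss2}$\Rightarrow$\ref{item:ss4}) and the automorphism $1_y\oplus(-1_z)$ of $y\oplus z$, which postcomposes $[j,k]$ to $(j,-k)$, this fiber is identified with the pullback of $y\xto{j}w$ and $z\xto{k}w$; tracing the comparison maps then shows the original square is cartesian. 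The reverse implication (cartesian squares are cocartesian) follows by duality, because cofiber-stability is self-dual: by \ref{item:ss2}$\Leftrightarrow$\ref{item:ss3} it amounts to $\cof\dashv\fib$ being an equivalence, and $\fib$ is the cofiber functor of $\D\op$, so \D is cofiber-stable iff $\D\op$ is. Applying the implication just proved to $\D\op$ yields the other half of \ref{item:ss1}.

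I expect the main obstacle to be the coherence bookkeeping in \ref{item:ss2}$\Rightarrow$\ref{item:ss1}: upgrading the abstract isomorphism ``$x\cong$ pullback'' to the assertion that the comparison map induced by the \emph{given} square is itself invertible, while keeping the signs in $(f,-g)$ and $[j,k]$ consistent with the fiber-versus-pullback identification. This is exactly the incoherence phenomenon highlighted in \autoref{rmk:ebziso}, and it is the reason the signs are built into the statement of \autoref{prop:pushout-dt}.
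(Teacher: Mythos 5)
Your overall architecture is sound, and large parts of it coincide with the paper's proof: the chain \ref{item:ss1}$\Rightarrow$\ref{item:ss2}$\Rightarrow$\ref{item:ss4}, the formal equivalences \ref{item:ss2}$\Leftrightarrow$\ref{item:ss3} and \ref{item:ss4}$\Leftrightarrow$\ref{item:ss5}, and your bridge \ref{item:ss4}$\Rightarrow$\ref{item:ss2} via \autoref{thm:cof-cubed} (your ``cube root of an equivalence'' argument is exactly the paper's two-out-of-six step, and passing $\Sigma$-stability to the shift $\shift\D\bbtwo$ via \autoref{lem:shiftedkan} is fine). The problem is the step you yourself call ``the main obstacle'': your implication \ref{item:ss2}$\Rightarrow$\ref{item:ss1} has a genuine gap, and it is not bookkeeping --- it is the entire content of the implication.

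Concretely: cartesianness of the given square $X\in\D(\Box)$ means that the unit $X\to (i_\lrcorner)_*(i_\lrcorner)^*X$ is invertible, equivalently (by (Der2) and full faithfulness of $(i_\lrcorner)_*$) that its component at $(0,0)$, the canonical map $x\to P$ into the homotopy pullback, is an isomorphism. Your argument produces two separate identifications: $x\cong\fib([j,k])$ from \autoref{prop:pushout-dt} plus cofiber-stability, and $P\cong\fib([j,-k])$ from the dual of \autoref{prop:pushout-dt} (the sign lands on the other map in the dual statement), and you propose to splice them with the automorphism $1_y\oplus(-1_z)$. Two things go wrong. First, to transport fibers along that automorphism you need it as a morphism in $\D(\bbtwo)$ lifting the incoherent data $\bigl([j,k],[j,-k],1_y\oplus(-1_z),1_w\bigr)$; but the theorem does not assume \D is strong, so (Der5) is unavailable and such a coherent lift must be constructed by hand --- work of the same magnitude as the hypercube-chasing in the proof of \autoref{prop:pushout-dt}, not a routine check. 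Second, and more seriously, even granting this you only obtain the \emph{existence} of an isomorphism $x\cong P$ whose composites with the two projections are $f$ and $g$. That does not make the canonical comparison map invertible: $P$ is a homotopy pullback, not a categorical one, so morphisms into it in $\D(\bbone)$ are not determined by their components, two maps with equal components need not be equal, and an object can be abstractly isomorphic to $P$ compatibly with the projections without the unit being an isomorphism. This is exactly the incoherence trap of \autoref{rmk:ebziso}. Note the contrast with your (correct) treatment of \ref{item:ss3}$\Rightarrow$\ref{item:ss2}: there the map you need to invert \emph{is} the component of a coherent unit, and the hypothesis directly inverts it; here the map handed to you by Mayer--Vietoris is a priori a different map.

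The paper's proof of \ref{item:ss3}$\Rightarrow$\ref{item:ss1} is structured precisely to avoid ever identifying a comparison map. It works with closure properties of the \emph{class} of cartesian squares in the shifted derivator $\shift\D\Box$: they are closed under cofibers (because $\cof$ is an equivalence of derivators once \ref{item:ss3} holds), hence under loops and suspensions, hence under coproducts, hence --- by applying \autoref{prop:pushout-dt} \emph{inside} $\shift\D\Box$ to replace a pushout by a cofiber of a map between cartesian squares --- under pushouts, and finally under $A$-shaped colimits for $A = (\cdot\ot\cdot\to\cdot\ot\cdot\to\cdot)$. It then writes an arbitrary cocartesian square as an $A$-colimit in $\shift\D\Box$ of squares constant in one direction, which are bicartesian by \autoref{thm:trivial-pushout}. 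Since ``cartesian'' is a property (membership in an essential image) preserved by all these operations, no units, counits, or signs ever need to be computed. If you want to salvage your more direct route, you would have to carry out a coherent comparison of the two fiber sequences (essentially a second double-hypercube argument); the paper's colimit decomposition is the device that makes this unnecessary.
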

\begin{proof}
  Clearly~\ref{item:ss1}$\Rightarrow$\ref{item:ss2}$\Rightarrow$\ref{item:ss4}.
 By the construction of the adjunctions $\Sigma\dashv\Omega$ and $\cof\dashv\fib$ in Lemmas~\ref{lem:susploopadj} and~\ref{lem:coffibadj}, we easily deduce~\ref{item:ss2}$\Leftrightarrow$\ref{item:ss3} and~\ref{item:ss4}$\Leftrightarrow$\ref{item:ss5}.

  On the other hand, assuming~\ref{item:ss5}, the suspension functor of \D is an equivalence, and therefore (using \autoref{lem:shiftedkan}) the suspension functor of $\D^\bbtwo$
is an equivalence.
  Since $\cof^3=\Sigma$, this implies that $\cof$ is also an equivalence (e.g.\ by using the ``two-out-of-six property'' for equivalences); hence~\ref{item:ss3} holds.
  (This argument can be found in~\cite{heller:stable}, among other places.)

  It remains to show~\ref{item:ss3}$\Rightarrow$\ref{item:ss1}, and here we can mostly mimic the proof of \cite[1.1.3.4]{lurie:ha}.
  Let $X\in\D(\ulcorner)$ be of the form $z \ot x \to y$; we want to show that the cocartesian square $(i_\ulcorner)_!X$ is also cartesian.
  (The dual argument will be identical.)
  Now $X$ can be left extended by zero to a diagram of the following form
  \begin{equation}
    \vcenter{\xymatrix@-1pc{ &0 \ar[d] \\ &x \ar[dl] \ar[dr] \\ z && y. }}\label{eq:Y}
  \end{equation}
  Let $B$ denote the shape of~\eqref{eq:Y}, and let $A$ be the category $(\cdot \ot\cdot\to\cdot\ot\cdot\to\cdot)$.
  Then there is a functor $r\colon  A^\rhd \times \mathord\ulcorner \to B$ such that if $Y$ is~\eqref{eq:Y}, then $r^*Y$ has the form shown in \autoref{fig:rstarY}.
  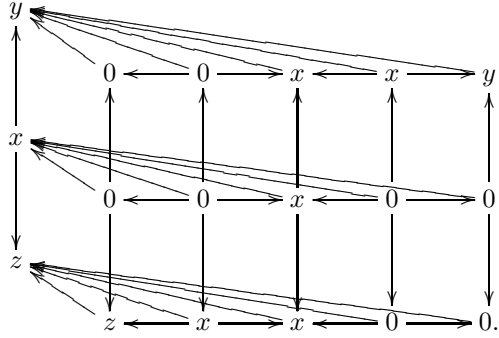
\begin{figure}
    \centering
  \[\vcenter{\xymatrix@R=1pc{
      y \ar@{<-}[dr] \ar@{<-}[drr] \ar@{<-}[drrr] \ar@{<-}[drrrr] \ar@{<-}[drrrrr] \ar@{<-}[dd] \\
      &0\ar@{<-}[r]\ar@{<-}[dd] &
      0\ar[r]\ar@{<-}[dd] &
      x\ar@{<-}[r]\ar@{<-}[dd] &
      x\ar[r]\ar@{<-}[dd] &
      y\ar@{<-}[dd]\\
      x \ar@{<-}[dr] \ar@{<-}[drr] \ar@{<-}[drrr] \ar@{<-}[drrrr] \ar@{<-}[drrrrr] \ar[dd]\\
      & 0\ar@{<-}[r]\ar[dd] &
      0\ar[r]\ar[dd] &
      x\ar@{<-}[r]\ar[dd] &
      0\ar[r]\ar[dd] &
      0\ar[dd]\\
      z \ar@{<-}[dr] \ar@{<-}[drr] \ar@{<-}[drrr] \ar@{<-}[drrrr] \ar@{<-}[drrrrr] \\
      &z\ar@{<-}[r] &
      x\ar[r] &
      x\ar@{<-}[r] &
      0\ar[r] &
      0.
    }}
  \]
    \caption{Building a span as a colimit of simpler ones}
    \label{fig:rstarY}
  \end{figure}
  It is straightforward to conclude that each vertical level of this diagram is in the image of $(i_A)_!$, where $i_A\colon A\to A^\rhd$ is the inclusion.
  Therefore, by \autoref{lem:shiftedkan}, the whole diagram is in the image of $(i_A\times 1_{\ulcorner})_!$.
  It follows that  $(\pi_A)_! (i_A\times 1_{\ulcorner})^* r^* Y\cong X$, where $\pi_A$ denotes the projection $A\times \mathord\ulcorner \to \mathord\ulcorner $.
  That is, if we view $(i_A\times 1_{\ulcorner})^* r^*Y$ as an $A$-shaped diagram in $\shift\D\ulcorner$, then its colimit is $X$.

  The diagram 
  \[\xymatrix{ A\times \ulcorner\ar[r]^-{\pi_A}\ar[d]_{1_{A}\times i_\ulcorner}&\ulcorner\ar[d]^{i_\ulcorner}\\A\times \Box\ar[r]^-{\pi_A}&\Box}\]
  commutes, so the colimit of 
  $(1_{A}\times i_\ulcorner)_! (i_A\times 1_{\ulcorner})^* r^*Y \in \shift\D\Box(A)$ as an $A$-shaped diagram is  $(i_\ulcorner)_!X$.
  However, this diagram has the following form in \D
  \begin{equation}
    \vcenter{\xymatrix@-1pc{
        0 \ar@{<-}[rr] \ar[dr] \ar[dd]
        && 0 \ar[rr] \ar'[d][dd] \ar[dr]
        && x \ar@{<-}[rr] \ar[dr] \ar'[d][dd]
        && 0 \ar[rr] \ar'[d][dd] \ar[dr]
        && 0 \ar'[d][dd] \ar[dr] \\
        & 0 \ar@{<-}[rr] \ar[dd]
        && 0 \ar[rr] \ar[dd]
        && x \ar@{<-}[rr] \ar[dd]
        && x \ar[rr] \ar[dd]
        && y \ar[dd]\\
        z \ar@{<-}'[r][rr] \ar[dr]
        && x \ar'[r][rr] \ar[dr]
        && x \ar@{<-}'[r][rr] \ar[dr]
        && 0 \ar'[r][rr] \ar[dr]
        && 0 \ar[dr]\\
        & z \ar@{<-}[rr]
        && x \ar[rr]
        && x \ar@{<-}[rr]
        && x \ar[rr]
        && y
      }}
  \end{equation}
  and when regarded as an $A$-diagram in $\shift\D\Box$, all its objects are cartesian as well as cocartesian squares in \D, since they are constant in at least one direction (see \autoref{thm:trivial-pushout}).
  Therefore, it will suffice to show that cartesian squares are closed under $A$-shaped colimits in $\shift\D\Box$.

  Towards this end, we first note that $A$-shaped colimits can be constructed from pushouts.
  Namely, if $D$ denotes the following category
  \begin{equation}
    \vcenter{\xymatrix{
        &\cdot \ar[r]\ar[d] &
        \cdot \ar@{.>}[d]\\
        \cdot \ar[r]\ar[d] &
        \cdot \ar@{.>}[r] &
        \cdot \ar@{.>}[d]\\
        \cdot \ar@{.>}[rr] & &
        \cdot
      }}
  \end{equation}
  with $j\colon A\into D$ the inclusion of the solid arrows, then in a diagram of the form $j_!Z$ both the square and the rectangle are cocartesian (by \autoref{lem:detection}) while the lower-right corner is the colimit over $A$ (by \autoref{lem:detectionplus} with $B'=C=A$).
  Thus, if cartesian squares are closed under pushouts, they are also closed under $A$-colimits.

  Now since $\cof\colon \shift\D{\Box\times\bbtwo} \to \shift\D{\Box\times\bbtwo}$ is an equivalence of derivators, it preserves cartesian squares; i.e.\ cartesian squares are closed under cofibers.
  In particular, they are closed under suspension (and under loop spaces).

  On the other hand, if $X$ and $Y$ are cartesian squares in \D, then the following squares in $\shift\D\Box$ (i.e.\ objects of $\shift\D\Box(\Box)$) are cocartesian
  \[\vcenter{\xymatrix@-.5pc{
      \Omega X\ar[r]\ar[d] &
      0\ar[d]\\
      0\ar[r] &
      X
    }}
  \qquad\text{and}\qquad
  \vcenter{\xymatrix@-.5pc{
      0\ar[r]\ar[d] &
      Y\ar[d]\\
      0\ar[r] &
      Y
    }}
  \]
  and hence so is their coproduct
  \[\vcenter{\xymatrix{
      \Omega X\ar[r]\ar[d] &
      Y\ar[d]\\
      0\ar[r] &
      X\sqcup Y.
    }}
  \]
  Thus, $X\sqcup Y$ is the cofiber of a map from $\Omega X$ to $Y$, both of which are cartesian; hence it is also cartesian.
  Thus, cartesian squares are closed under coproducts.

  Finally, for an arbitrary cocartesian square
  \[\vcenter{\xymatrix@-.5pc{
      X\ar[r]\ar[d] &
      Y\ar[d]\\
      Z\ar[r] &
      W
    }}
  \]
  in $\shift\D\Box$, \autoref{prop:pushout-dt} yields a cocartesian square
  \[\vcenter{\xymatrix@-.5pc{
      X\ar[r]\ar[d] &
      Y\sqcup Z\ar[d]\\
      0\ar[r] &
      W.
    }}
  \]
  Thus, if $X$, $Y$, $Z$, and hence also $Y\sqcup Z$ are cartesian, then $W$ is the cofiber of a map between cartesian squares and hence is also cartesian.
  Thus, cartesian squares are closed under pushouts in $\shift\D\Box$, as desired.
\end{proof}

\begin{rmk}
  \autoref{thm:stable} can be regarded as a converse to \autoref{thm:triang}: if the structure defined on a pointed derivator \D in \S\ref{sec:pointedstable} makes $\D(\bbone)$ triangulated, then in particular the suspension functor must be an equivalence; hence \D is stable.
\end{rmk}

\appendix
\section{The calculus of mates}
\label{sec:mates}

We briefly recall a very useful tool called the \emph{calculus of mates} for natural transformations.  More information can be found in e.g.~\cite{ks:r2cats} or~\cite[1.1]{ayoub:six}.

Suppose given a square of functors containing a natural transformation
\[\vcenter{\xymatrix{
    \cA\ar[r]^{f^*} \ar[d]_{h^*} \drtwocell\omit{\alpha} &
    \cB\ar[d]^{k^*}\\
    \cC\ar[r]_{g^*} &
    \cD.
  }}
\]
If the functors $f^*$ and $g^*$ have left adjoints $f_!$ and $g_!$ respectively, then $\alpha$ has a \textbf{mate} transformation $\alpha_!\colon g_! k^* \to h^* f_!$, defined to be the composite
\begin{equation}
  g_! k^* \xto{g_! k^* \eta} g_! k^* f^* f_! \xto{g_!\alpha f_!} g_! g^* h^* f_! \xto{\varepsilon h^* f_!} h^* f_!
\end{equation}
where $\eta$ and $\varepsilon$ denote the unit and counit of the adjunctions $f_! \dashv f^*$ and $g_! \dashv g^*$, respectively.
Similarly, if instead $k^*$ and $h^*$ have right adjoints $k_*$ and $h_*$ respectively, then $\alpha$ has another mate $\alpha_* \colon f^* h_* \to k_* g^*$
\begin{equation}
  f^* h_* \xto{\eta f^* h_*} k_* k^* f^* h_* \xto{k_* \alpha h_*} k_* g^* h^* h_* \xto{k_* g^* \varepsilon} k_* g^*.
\end{equation}
These operations are inverses, in that if we reorient $\alpha_!$ to look like $\alpha$
\[\vcenter{\xymatrix{
    \cB\ar[r]^{k^*} \ar[d]_{f_!} \drtwocell\omit{\alpha_!} &
    \cD\ar[d]^{g_!}\\
    \cC\ar[r]_{h^*} &
    \cA,
  }}
\]
then apply the second mate-construction to it (which we can do since $f_!$ and $g_!$ have right adjoints, namely $f^*$ and $g^*$), then we have $(\alpha_!)_* = \alpha$, and dually.
(This follows from the triangle identities for the adjunctions $f_! \dashv f^*$ and $g_! \dashv g^*$.)

On the other hand, if all four functors $f^*$, $g^*$, $h^*$, and $k^*$ have left adjoints $f_!$, $g_!$, $h_!$, and $k_!$ respectively, then we can apply the \emph{first} mate-construction to $\alpha_!$ to obtain $(\alpha_!)_! \colon h_! g_! \to f_! k_!$.
In this case, we can also regard $\alpha$ as a transformation
\[\vcenter{\xymatrix{
    \cA\ar@{=}[r]\ar[d]_{g^* h^*} \drtwocell\omit{\alpha} &
    \cA\ar[d]^{k^* f^*}\\
    \cD\ar@{=}[r] &
    \cD.
  }}
\]
Then since $g^* h^*$ and $k^* f^*$ have left adjoints $h_! g_!$ and $f_! k_!$ respectively, we can construct a mate $\alpha_!\colon h_! g_! \to f_! k_!$, and we have $\alpha_! = (\alpha_!)_!$.

The mate-construction is functorial with respect to horizontal and vertical pastings of squares.
This can be expressed formally as an isomorphism of double categories; see~\cite{ks:r2cats}.
However, it is not a functor in the ordinary sense, and in particular the mate of an isomorphism need not be an isomorphism.

It is true, however, that if $h^*$ and $k^*$ are identities (or even equivalences), then $\alpha$ is an isomorphism if and only if $\alpha_!$ is so, and dually.
In particular, if $f^*$ and $g^*$ have left adjoints and $h^*$ and $k^*$ have right adjoints, then $\alpha_!\colon g_! k^* \to h^* f_!$ is an isomorphism if and only if $\alpha_* \colon f^* h_* \to k_* g^*$ is so, using the above fact about ``iterated first mate-constructions''.
This is relevant to the definition of homotopy exact square, \autoref{defn:hoexact}.

\bibliographystyle{alpha}
\bibliography{derivbicat}

\end{document}